\documentclass[11pt,a4paper]{article}

\usepackage[
  top=30mm,
  bottom=30mm,
  left=30mm,
  right=30mm
]{geometry}

\usepackage{url}
\usepackage{setspace}
\usepackage{scrextend}
\usepackage{cite}
\usepackage{multirow}
\usepackage{cancel}
\usepackage{graphicx}

\usepackage{amsmath}
\usepackage{amsthm}
\usepackage{amssymb}
\usepackage{yhmath}
\usepackage{mathtools}
\usepackage{mathrsfs}

\usepackage[all]{xy}
\usepackage[toc,page]{appendix}

\usepackage{titlesec}
\usepackage[nottoc]{tocbibind}
\usepackage[titles]{tocloft}
\usepackage{titletoc}

\usepackage{tikz-cd}
\usepackage{indentfirst}
\usepackage{subcaption}
\usepackage[shortlabels]{enumitem}

\newcounter{mainthm}

\newtheoremstyle{thm_style}
  {7pt plus 2pt minus 1pt} 
  {7pt plus 2pt minus 1pt} 
  {}                       
  {}                       
  {\bfseries}              
  {.}                      
  {0.1em}                  
  {}                       

\theoremstyle{thm_style}

\newtheorem{thm}{Theorem}[section]

\newtheorem{lemma}[thm]{Lemma}
\newtheorem{prop}[thm]{Proposition}

\newtheorem{-thm}[thm]{Definition-Theorem}

\newtheorem{defn-lem}[thm]{Definition-Lemma}

\newtheorem{example}[thm]{Example}

\newtheorem{defn}[thm]{Definition}
\newtheorem{assumption}[thm]{Assumption}

\newtheoremstyle{rmk}
  {7pt plus 2pt minus 1pt} 
  {7pt plus 2pt minus 1pt} 
  {}                       
  {}                       
  {\itshape}               
  {.}                      
  {0.5em}                  
  {}                       

\theoremstyle{rmk}
\newtheorem{rmk}[thm]{Remark}

\newtheoremstyle{note}
  {8pt plus 2pt minus 1pt} 
  {5pt plus 1pt minus 1pt} 
  {\itshape}               
  {10pt}                   
  {\bfseries}              
  {}                       
  {0.5em}                  
  {}                       

\theoremstyle{note}

\newcommand{\id}{\mathrm{id}}

\newcommand{\ev}{\mathrm{ev}}

\newcommand{\e}{\mathbf{e}}

\newcommand{\F}{\mathcal{F}}
\newcommand{\R}{\mathcal{R}}
\newcommand{\C}{C}
\newcommand{\RP}{\mathbb{RP}}
\newcommand{\CP}{\mathbb{CP}}

\newcommand{\g}{\mathfrak{g}}

\allowdisplaybreaks

\numberwithin{equation}{section}

\titlespacing*{\section}
  {0pt}
  {3ex plus 1ex minus 0.2ex}
  {1.25ex plus 0.2ex}

\titleformat{\subsection}[runin]
  {\normalfont\normalsize\bfseries}
  {\thesubsection}
  {0.6em}
  {}
  [.]

\titlespacing*{\subsection}
  {0pt}
  {2.25ex plus 1ex minus 0.2ex}
  {0.7em}

\titleformat{\subsubsection}[runin]
  {\normalfont\normalsize\itshape}
  {\S\,\thesubsubsection}
  {0.6em}
  {}
  [.]

\titlespacing*{\subsubsection}
  {0pt}
  {2ex plus 0.8ex minus 0.2ex}
  {0.7em}

\titleformat{\paragraph}[runin]
  {\small\sffamily\bfseries}
  {}
  {0pt}
  {}

\titlespacing*{\paragraph}
  {0pt}
  {1.5ex plus 0.5ex minus 0.2ex}
  {0.7em}

\setlist[description]{
  font=\normalfont\itshape\textbullet\space,
  topsep=4pt,
  itemsep=2pt,
  parsep=0pt
}

\titlecontents{section}
  [0.72em]
  {\scriptsize\bfseries\vspace{2pt}}
  {\thecontentslabel.\enspace}
  {}
  {\titlerule*[0.38pc]{.}\contentspage}

\titlecontents{subsection}
  [0em]
  {\scriptsize}
  {\qquad\quad\thecontentslabel.\enspace}
  {}
  {\titlerule*[0.5pc]{.}\contentspage}

\titlecontents{subsubsection}
  [0em]
  {\footnotesize\vspace{1pt}}
  {\qquad\qquad\thecontentslabel.\enspace}
  {}
  {\titlerule*[0.5pc]{.}\contentspage}

\usepackage{setspace}
\usepackage{indentfirst}

\AtBeginDocument{%
  \setlength{\parindent}{1.5em}%
  \setlength{\parskip}{0pt plus 0.5pt}%
}

\title{The Relative Fukaya Category as a Closed-Open Deformation}
\author{Mohamad Rabah}
\date{}
\begin{document}

\maketitle
\begin{abstract}
    We give a proof of Conjecture $1.3$ of \cite{strom2024maurer}.
\end{abstract}
\tableofcontents

\begingroup
\renewcommand{\thefootnote}{}
\footnotetext{\hspace{-1.8em}\texttt{Email: mohamad@amss.ac.cn, mohammadbrabah@gmail.com}}
\addtocounter{footnote}{-1}
\endgroup

\section{Introduction}
Let $(M, \omega)$ be a closed monotone symplectic manifold as in Definition \ref{MonotoneDefn} and let $D=D_1\cup\dots\cup D_N$ be an orthogonal symplectic normal divisors of $M$, as in Definition \ref{OrthSimTranDiv}, such that
		\begin{equation}
			2c_1(M)=\sum_{1\leq j\leq N}\lambda_j \operatorname{PD}[D_j]
		\end{equation}
		where $\lambda_j$ are non-negative rational numbers such that $\lambda_j\leq 2$ for all $j=1, \dots, N$. Set $X:=M\setminus D$ and note that by Theorem \ref{McLeanStru}, X admits the structure of a Liouville manifold. For each $j=1, \dots, N$ let $q_j$ be a formal variable of $\deg(q_j)=\lambda_j$. Denote by $\Lambda$ the Novikov ring thought of as the $q$-adic completion of the polynomial ring $\mathbb C[q_1, \dots, q_N]$. 
		\begin{defn}[\cite{strom2024maurer}]
			We denote by $\mathcal{W}(M, D)$ the curved filtered $\Lambda$-linear $A_\infty$-deformation of the wrapped Fukaya category $\mathcal{W}(X)$ as in \cite{borman2024l_}, given by the push-forward of the Maurer-Cartan element $\beta$ as in \cite{strom2024maurer},
		\begin{equation*}
			\operatorname{CO}_*(\beta):=\sum_{d\geq 1}\frac{1}{d!}\operatorname{CO}_d(\beta, \dots, \beta)\in\operatorname{MC}(\mathcal{W}(X)^{\Lambda})
		\end{equation*}
		via the Closed-Open map as in \cite{borman2024l_}. 
		\end{defn}
		In this paper we prove Conjecture $1.3$ of \cite{strom2024maurer}. 
		\begin{thm}
			The full sub-category $\mathcal{W}(M, D)$ whose objects are closed Lagrangian branes in $X$, is filtered quasi-equivalent to $\mathcal{F}(M, D)$, Seidel's relative Fukaya category as in \cite{seidel2002fukaya}. 
		\end{thm}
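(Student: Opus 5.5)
The strategy is to compare the two categories directly on closed exact Lagrangians $L\subset X$, for which no wrapping is needed. First I will fix a common set of Floer data for a finite collection of closed Lagrangian branes: perturbation data on $M$ that are compatible with $D$, together with almost complex structures that make each $D_j$ $J$-holomorphic near $D$ and are of contact type on the cylindrical end of $X$. For closed $L_0,L_1$ I can take the wrapping Hamiltonian to be supported away from the $L_i$. Then the wrapped cochain complex $CW^*(L_0,L_1)$ agrees with the transversal intersection complex $CF^*(L_0,L_1)$ used by Seidel, after the Novikov variables are added. With this in place, both categories have the same objects and the same underlying $\Lambda$-modules $CF^*(L_0,L_1)\otimes\Lambda$. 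Seidel's operations are
\[
\mu^d_{\mathcal F(M,D)}=\sum_{A}q^{A\cdot D}\,\#\mathcal M^d(A),
\]
a sum over disks in $M$ weighted by their intersection numbers with the $D_j$. The deformed operations are
\[
\mu^d_{\mathcal W(M,D)}=\sum_{k\ge0}\tfrac1{k!}\,\mu^{d}_{\mathcal W(X)}\{\operatorname{CO}_*(\beta)^{\otimes k}\},
\]
which count disks in $X$ carrying interior insertions of $\beta$.

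The core step is a geometric identification of the coefficients of $\beta$ with the local contributions of $D$. As constructed in \cite{strom2024maurer}, $\beta$ is the Maurer--Cartan element in $SC^*(X)\otimes\Lambda$ that counts caps: pseudo-holomorphic planes in $M$ asymptotic to Reeb orbits and meeting $D$ with multiplicity vector $\mathbf v$, weighted by $q^{\mathbf v}$. I will use the stretching-the-neck (SFT) compactness and gluing theorem along the contact hypersurface $\partial\bar X$. Stretching transforms each disk in $M$ with boundary on the $L_i$ into a building consisting of a disk in $X$ with interior punctures asymptotic to Reeb orbits, with caps in the normal crossing neighbourhood of $D$ attached at those punctures. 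Transversality for the buildings uses orthogonality and normality of the divisors, as in the log/relative setups of \cite{borman2024l_}. After this decomposition, the count of disks in class $A$ equals the sum over gluings of the $X$-part, which is exactly a component of $\operatorname{CO}_k$ or $\mu^{d}$ with $k$ interior inputs, paired with the cap counts in $\beta$. The $1/k!$ accounts for the symmetrization over the labelings of the interior punctures. The monotonicity and $\lambda_j\le2$ condition are needed for three things: they exclude sphere bubbling and bubbles lying in $D$ from the index count, they make the $q$-grading match the Maslov grading, and they keep the curvature of both sides in positive $q$-filtration, so that both categories are filtered and curved in the same way.

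Since SFT gluing identifies moduli spaces only for a stretched almost complex structure and a choice of perturbations, I do not expect a literal equality of operations. The plan is instead to build a filtered $A_\infty$-functor
\[
\mathcal F(M,D)\to\mathcal W(M,D)
\]
that is the identity on objects. Its higher components are obtained from a one-parameter family of neck lengths $R\in[0,\infty]$, with the parametrized moduli spaces supplying a homotopy in the usual way. Modulo $q$, both categories reduce to the exact Fukaya category of closed Lagrangians in $X$, and the linear term of the functor reduces to the identity. A filtered functor whose associated graded is a quasi-isomorphism is a filtered quasi-equivalence, by a spectral sequence / $q$-adic completeness argument.

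The main obstacle is the neck-stretching step: showing that the homotopy-level $\operatorname{CO}_*(\beta)$ coming from the Hamiltonian (wrapped, $S^1$-dependent) formalism of \cite{borman2024l_} actually matches the holomorphic cap counts at chain level. This requires interpolating between Hamiltonian-perturbed caps in the completion and genuine holomorphic planes in $M$ relative to $D$, in particular when Reeb orbits are degenerate (Morse–Bott families). My first attempt would be to use an autonomous Morse–Bott model for $SC^*(X)$ in which $\beta$ is represented by cascades, and to show that the comparison continuation maps are filtered.
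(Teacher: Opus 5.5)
Your setup of common Floer data on closed Lagrangians and your final step (a filtered functor that reduces to the identity modulo $q$ is a filtered quasi-equivalence, via the spectral sequence of the $q$-adic filtration) are fine; the paper uses the same associated-graded argument. The gap is the core step. You take it as given that $\beta$ counts caps, i.e.\ holomorphic planes in $M$ asymptotic to Reeb orbits. That is at best a heuristic. In the paper, $\beta$ is $\ev_1$ of the solution $\beta(t)$ of an ODE in $\mathfrak g^\Lambda_t$. That ODE starts at the tautological element $\alpha=\sum_j q_j x^{D_j}$ of the partial $L_\infty$-algebra $\mathfrak g$, and $\mathfrak g$ is built from $S^1$-dependent Hamiltonian Floer data.

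Your neck-stretching identification of $\operatorname{CO}_*(\beta)$ therefore needs two things, neither of which you establish. (i) You need SFT compactness and gluing for disks in $M$ relative to a normal crossings divisor along $\partial\overline X$, with transversality for caps on multiply covered, Morse--Bott Reeb orbits. (ii) You need a chain-level comparison between those cap counts and the Hamiltonian, gauge-theoretic $\beta$, including its higher terms $\ell_{d+1}(\beta(t),\dots,\beta(t),t\alpha)$. You flag (ii) yourself as the main obstacle and offer only a first attempt. Without (i) and (ii), the $R=\infty$ end of your neck-length family is not identified with $\mathcal W(M,D)$. So the proposal does not yet prove the theorem.

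The missing idea that makes all of this unnecessary is to compare with the gauge-equivalent element $\alpha$ rather than with $\beta$ itself. The paper extends the closed--open map to the formal generators $x^{D_j}$. The extended map $\operatorname{CO}^D_d$ counts disks with orbit inputs in $\mathcal P_{>0}SC^*(X)$ and interior marked points constrained to hit $D$ with intersection number $1$. These are exactly the moduli spaces Seidel's category uses. Two inputs control the compactifications: the $\mathcal P$-filtration index/energy estimates exclude breaking along $D$-orbits, and monotonicity plus positivity of intersections exclude sphere bubbles. From this, $\operatorname{CO}^D$ satisfies the $L_\infty$-morphism equations on $\mathfrak g_{>1}$ and agrees with $\operatorname{CO}$ on $\mathcal P_{>0}SC^*(X)$.

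The whole gauge path $\beta(t)=\alpha(t)+\gamma(t)dt$ lies in $\mathfrak g^\Lambda_{>1}$. Pushing it forward by $\operatorname{CO}^D$ gives a curved $A_\infty$-structure on $\hom\hat\otimes\mathbb C[t,dt]$. Your associated-graded argument shows its evaluations at each $c\in[0,1]$ are filtered quasi-equivalences. At $t=1$ this is $\mathcal W(M,D)$. At $t=0$ it is the $\operatorname{CO}^D(\alpha)$-deformation, which is strictly isomorphic to $\mathcal F(M,D)$ because the moduli spaces coincide. No neck-stretching or cap analysis is needed.
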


        \subsection*{Organization and proof strategy}

The main ingredient in the proof is an extended Closed-Open construction. We extend the Closed-Open operations to graded-symmetric maps
\[
\operatorname{CO}^{D}_{d}\colon
\mathfrak g_{>0}^{\otimes d}
\longrightarrow
CC^*\bigl(\mathcal W_{\mathrm{cpt}}(X)\bigr)[2-2d],\] for $d\geq1$; by counting disk-rooted configurations with interior incidence conditions along $D$. These operations restrict to the usual closed-open map on $\mathcal P_{>0}SC^*(X))$, and satisfy the $L_\infty$-morphism identities whenever the inputs lie in $\mathfrak g_{>1}$. The Maurer-Cartan homotopy constructed in \cite{strom2024maurer} lies entirely in the completed strict filtration range $\mathfrak g^\Lambda_{>1}$. Thus, although $\operatorname{CO}^{D}$ is not an $L_\infty$-morphism, its identities suffice to push forward this particular homotopy. The resulting family of curved $A_\infty$-structures connects the relative Fukaya structure to the deformation of the wrapped Fukaya category by the Maurer-Cartan element of \cite{strom2024maurer}. Passing to the associated graded shows that its endpoint evaluations are filtered curved quasi-equivalences in the sense of \cite{perutz2022constructing}, proving the main theorem.

Sections~2-4 recall the geometric and algebraic constructions from \cite{borman2024l_}, \cite{perutz2022constructing}, \cite{sheridan2020versality} and \cite{strom2024maurer}. Section~5 contains the extended Closed-Open map and the proof of the main theorem. Readers familiar with the relevant work as cited above, may proceed directly to Section~5 and consult the preceding sections only for notation and conventions.

    \section*{Acknowledgments} First and foremost, we would like to thank Prof. Sheridan for his communications and for bringing this question to the author’s attention. We also gratefully acknowledge the hospitality of CMS at the University of Cambridge, where the author first learned about this conjecture. In this regard, we thank Prof. Smith for his invitation and Prof. Zhou for financial support. We would also like to thank our teachers, Prof. Fukaya and Prof. McLean, for their continuous support. Moreover, I would like to thank my girlfriend, Tracy Zhang Ke, for her constant encouragement throughout this work. Lastly, we are grateful to the anonymous referee for their constructive remarks.

\section{Preliminaries from Symplectic Geometry}
We start by recalling preliminary definitions that will serve as a setting of our constructions.
	
	\begin{defn}\label{MonotoneDefn}
		Let $(M,\omega)$ be a closed symplectic manifold. $(M, \omega)$ is said to be (positively) monotone, if $[\omega]=2\kappa c_1(M)\in H^2(M; \mathbb{R})$ where $\kappa>0$ is a real number, which we will refer to as the $\textit{monotonicity constant}$. 
	\end{defn}
	\begin{example}
		$\CP^n$ with its a Fubini-Study form $\omega_{FS}$, is a monotone symplectic manifold of monotonicity constant $n+1$. 
	\end{example}
	
	\begin{defn}[Liouville Domains] \label{LiouDomDefn}
		A Liouville domain is a triple $(X, \theta, Z)$ where $(X,\omega=d\theta)$ is a compact exact symplectic manifold with contact boundary $\partial X$ of contact form $\theta|_{\partial X}:=\iota_Z\omega$.\\
		$Z\in TX$ is a globally defined vectorfield on $X$ such that $d\iota_Z\omega=\omega$ and pointing outward along $\partial X$. We call $Z$ the Liouville vectorfield.\\
		If we denote by $\phi^Z_t:\mathbb{R}\times X\rightarrow X$ the flow of $Z$, we define $r:X\rightarrow\mathbb{R}_{\geq 0}$ the $\textit{radial function}$ given uniquely by $\phi^Z_{\log r(x)}(x)\in\partial X$. 
	\end{defn}
    
	\begin{rmk}
		In another words, for $x\in X$, $\log r(x)$ is the time needed so that if we flow $x$ along $Z$, we will reach $\partial X$.\\
	\end{rmk}
    \begin{example}
		Let $(Q, g)$ be any closed Riemannian manifold and $\epsilon>0$ real number. Then, the total space of a cotangent disk bundle $\mathbb{D}_\epsilon Q$ is a Liouville domain of symplectic form given by the restriction of $d\lambda_{can}$ from $T^*Q$ and the Liouville vectorfield is the radial vector along the fibres multiplied by the norm function along the fibre of $\mathbb{D}_\epsilon Q\rightarrow Q$. 
	\end{example}
	
	As $X$ is orientable and using $\phi^Z$ we identify a tubular neighborhood of $\partial X\subset X$ by $(0,1]\times\partial X$. Using such identification we consider the following definition.

	\begin{defn}[Completion]\label{CompletionDefn}
		We define $\hat{X}:=X\cup_\sim (0,\infty)\times \partial X$ where $\sim$ is defined on $(0,1]\times\partial X$ and the identified tubular neighborhood of $\partial X$ as above.\\
		$\hat{X}$ is an open symplectic manifold with a symplectic form $\hat{\omega}:=d\hat{\theta}$ where $\hat{\theta}:=r\theta$. 
	\end{defn}
\begin{example}
		Following the same notation as in the above example, the total space of the cotangent bundle $T^*Q$ is the completion of $\mathbb{D}_\epsilon Q$. 
	\end{example}
\begin{defn}[Contact Shell] \label{ContactShellDefn}
    A contact shell of $\partial X$ is an open neighborhood of $\partial X$ in its completion $\hat{X}$ such that using the Liouville flow as above, such a neighborhood can be identified with $(1-\epsilon,1+\epsilon)\times\partial X$ for some $\epsilon>0$.
\end{defn}
    
	\begin{defn}[Contact-type Almost Complex Structure]\label{Contact-typeAlmostComplex}
	    An almost complex structure $J$ on $\hat{X}$ is said to be of contact-type if $dr=\theta\circ J$. Equivalently, $J(\partial_r)=R_\theta$ where $R_\theta$ is the Reeb vectorfield associated to $\theta$. 
	\end{defn}

     \begin{defn}[Convex Finite-type]\label{ConvexFiniteTypeDefn}
            An exact symplectic manifold $(X,\omega)$ is said to be convex of finite-type if there exists a primitive $\theta\in\Omega^1(X)$ of $\omega$ together with a smooth exhausting function $f:X\rightarrow [0, \infty)$ such that there exists a $c\in[0, \infty)$ where $df(Z)>0$ on $f^{-1}([c, \infty))$ for $Z$ the unique vectorfield given by $\iota_Z\omega=\theta$.  
        \end{defn}

\subsection{Liouville Structure on the Complement of Symplectic Divisor}

The purpose of this section is to set up the setting of our main result and the notations used. We will follow closely \cite{borman2022quantum}, \cite{borman2024l_}, \cite{mclean2012growth} and \cite{ganatra2021log}.
		Let $(M, \omega)$ be a closed symplectic manifold.
		\begin{defn}[Symplectic Divisor]
			A symplectic divisor of $(M, \omega)$ is an embedded, connected, closed submanifold $D\subset M$ of real codimension $2$ such that $\omega|_D$ is a symplectic form on $D$. 
		\end{defn} 
		\begin{example}
			$\CP^n\subset\CP^{n+1}$ is a symplectic divisor of $\CP^{n+1}$ for the Fubini-Study symplectic form. 
		\end{example}
		
			\begin{defn}[Transverse Symplectic Divisors]
			Given a finite collection $\{D_j\}_{1\leq j\leq N}$ of symplectic divisors in $M$, such collection is said to be transverse, if for any subset $I\subseteq\{1, \dots, N\}$
			\begin{equation}\label{orientationExactSequ}
				D_I:=\bigcap_{i\in I}D_i
			\end{equation}
			is a transversely cut-out embedded submanifold of $M$. 
		\end{defn}
		Observe that, for a transverse collection of symplectic divisors $\{D_j\}$ in $M$, we have an exact sequence of vectorbundles over $D_I$
		\begin{equation}\label{orientationExactSeq}
			0\rightarrow TD_I\rightarrow TM|_{D_I}\rightarrow \nu_M D_I\rightarrow 0
		\end{equation}
		where $\nu_M D_I$ is the normal bundle of $D_I\subset M$. Moreover, by the transversality condition on $\{D_j\}$, it follows that we have a splitting
		\begin{equation*}
			\nu_M D_I=\oplus_{i\in I}\nu_M D_i|_{D_I}. 
		\end{equation*}
		Both $TM|_{D_I}$ and $TD_I$ have a canonical choice of orientation given by $\omega|_{D_I}$, yet such choice may not be compatible for different choices of $I\subseteq\{1, \dots, N\}$. To this end and following \cite{mclean2012growth}, we introduce the following definitions.
			\begin{defn}[Simple/Orthogonal Symplectic Divisors]\label{OrthSimTranDiv}
			
			\begin{enumerate}
				\item The transverse collection $\{D_j\}$ of symplectic divisors is said to be $\textit{simple}$, if the orientation on $\nu_M D_I$ given by the symplectic orientation on $TD_i$ under the identification $\nu_M D_I=\oplus_{i\in I}\nu_M D_i|_{D_I}$ is compatible with the exact sequence \ref{orientationExactSeq}, for every subset $I\subseteq\{1, \dots, N\}$. 
				\item The transverse collection $\{D_j\}$ of symplectic divisors is said to be $\textit{orthogonal}$, if for each $i\neq j\in\{1, \dots, N\}$ and for every $x\in D_i\cap D_j$, the $\omega$-complement $T_xD_i^{\perp\omega}\leq TM$ is a subspace of $T_x D_j$. 
			\end{enumerate}
		\end{defn}
			Now suppose that $D_\lambda:=\sum_j\lambda_jD_j$ is an orthogonal simple transverse divisors such that $\lambda_j>0$ and denote by $X:=M\setminus D$. By the Poincare-Lefschetz duality, we have an isomorphism
			\begin{equation*}
				H_2(M,X)\xrightarrow{\cong}\mathbb{Z}^N
			\end{equation*}
			given by $a\mapsto(a.D_j)_{j=1, \dots, N}$. On the other hand, if we denote by $e_j:=(0, \dots,0,1,0,\dots, 0)$ the standard basis of $\mathbb{Z}^N$, the inverse of the above map is given by $e_j\mapsto[u_j]\in H_2(M,X)$ where $u_j:(\mathbb{D}, \partial\mathbb{D})\rightarrow(M,X)$ is a continuous map satisfying:
			\begin{enumerate}
				\item $[u_j].D_i=\delta_{j,i}$ for all $i,j=1, \dots, N$,
				\item $\operatorname{im}(u_j)\cap D_i=\emptyset$ for all $i\neq j$. 
			\end{enumerate}
		By the Universal Coefficient theorem, we have an induced identification of $H^2(M,X;\mathbb{R})\cong H_2(M,X;\mathbb{R})^*\cong\mathbb{R}^N$. We denote by $\operatorname{pd}^{rel}(D_j)\in H^2(M, X; \mathbb{R})$ the image of the dual basis of $e_j\in\mathbb{Z}^N$. Hence, under our assumption, and after choosing $\kappa>0$ integer such that $\kappa\lambda_j\in\mathbb{N}$ for all $j=1, \dots, N$, it follows that
		\begin{equation*}
			[\omega]^{rel}:=\sum_j\kappa_j\operatorname{PD}^{rel}(D_j)
		\end{equation*}
		is a lift of $[\omega]\in H^2(M; \mathbb{R})$ to $H^2(M, X; \mathbb{R})$ where $\kappa_j:=\kappa\lambda_j$ for all $j=1, \dots, N$. 
		Now using the deRham model of $H^2(M,X; \mathbb{R})$, it follows that $\omega^{rel}\in\Omega(M,X)\equiv\operatorname{Cone}(\Omega(M)\rightarrow\Omega(X))[-1]$ and in particular, $\omega^{rel}=(\omega, -\theta)$ where $\theta\in\Omega^1(X)$ such that $d\theta=\omega$. Therefore, using Stokes' Theorem and for a given smooth map $u_j:(\mathbb{D}, \partial\mathbb{D})\rightarrow(M, X)$ as above, it follows that
		\begin{equation*}
			\kappa_j=\int_\mathbb{D}u_j^*\omega-\int_{\partial\mathbb{D}}u_j^*\theta
		\end{equation*}

         \begin{thm}[McLean \cite{mclean2012growth}]\label{McLeanStru}
			$X$ admits a Liouville structure which is convex and of finite-type. 
		\end{thm}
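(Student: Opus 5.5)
The plan is to build the Liouville form on $X$ directly from the relative class $[\omega]^{rel}=\sum_j\kappa_j\operatorname{PD}^{rel}(D_j)$, using the orthogonality of the $D_j$ to control the geometry near $D$.

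\textbf{Step 1: local models near the divisor.} Using orthogonality and simplicity (Definition \ref{OrthSimTranDiv}), I would first deform $\omega$ in its cohomology class (by a Moser-type argument) so that $D$ is ``standard''. This means there are compatible tubular neighbourhoods $U_j\supset D_j$ with symplectic projections $\pi_j:U_j\to D_j$, Hermitian line-bundle structures, and radial functions $\rho_j$. On each overlap $U_I=\bigcap_{i\in I}U_i$ the neighbourhood should be a product of the disc bundles $\nu_M D_i|_{D_I}$, and the symplectic form there should be
\[
\omega|_{U_I}=\pi_I^*\omega_{D_I}+\sum_{i\in I}d(\rho_i^2\alpha_i),
\]
where $\alpha_i$ is a connection form on $\nu_M D_i$. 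Orthogonality is exactly what makes these normal directions $\omega$-orthogonal, so that the local models are compatible on overlaps.

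\textbf{Step 2: a primitive with prescribed residues.} Next, with cutoffs $\phi_j(\rho_j)$ equal to $1$ near $D_j$, I would set
\[
\theta_0=\sum_j(\rho_j^2-\kappa_j/2\pi)\,\phi_j\,\alpha_j\quad\text{on }X.
\]
The closed form $\omega-d\theta_0$ then represents a class whose relative lift vanishes, because the residues $\kappa_j$ were built into $\theta_0$. Its restriction near $D$ is pulled back from a compactly supported closed form on $X$ whose class lies in the image of $H^2(M,X)$. Using $[\omega]^{rel}$ and the identification $H^2(M,X)\cong\mathbb{R}^N$ dual to the discs $u_j$, this form is exact. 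So I can correct $\theta_0$ to a primitive $\theta$ of $\omega$ on $X$ that agrees with $\theta_0$ near $D$. The computation $\kappa_j=\int u_j^*\omega-\int u_j^*\theta$ fixes the constants in $\theta_0$.

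\textbf{Step 3: convexity and finite type.} Let $Z$ be defined by $\iota_Z\omega=\theta$. Near $D_I$, in the local model, $Z$ is a sum of radial fields in the normal directions,
\[
Z=\sum_{i\in I}\frac{(\rho_i^2-\kappa_i/2\pi)}{2\rho_i}\,\partial_{\rho_i}+\text{(horizontal terms from the cutoffs)}.
\]
Since $\rho_i^2<\kappa_i/2\pi$ near $D_i$, $Z$ points toward $D$. I would take $f=\sum_j-\log\rho_j$, smoothed off with the cutoffs, as the exhausting function and verify $df(Z)>0$ on $f^{-1}([c,\infty))$. This is the main obstacle. At the corners $D_I$ the cutoff regions of different $D_i$ overlap, and the cross terms must not spoil positivity. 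To handle this I would choose the cutoffs $\phi_i$ to depend only on $\rho_i$ and, using orthogonality, ensure that $d\rho_i(Z)$ involves only the $i$-th term, so that each summand of $df(Z)$ is separately nonnegative and at least one is positive.

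Completeness of the flow of $Z$ follows once $f$ is proper and $Z$ is gradient-like near infinity. This gives the convex finite-type Liouville structure of Definition \ref{ConvexFiniteTypeDefn}, and by a standard argument its completion is Liouville, as in Theorem \ref{McLeanStru}.
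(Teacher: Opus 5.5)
Your outline follows the paper's (McLean's) route, but Step 2 is wrong as written, and Step 3 depends on it.

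\textbf{The gap in Step 2.} Exactness of $\Omega:=\omega-d\theta_0$ on $X$ is automatic, since $[\omega|_X]=0$ already, so that step carries no information. What matters is how the correcting primitive behaves near $D$, and your argument gives no control there. A primitive on $X$ is only determined up to $dg$ with $g\in C^\infty(X)$, and $dg(X_i)$ can be unbounded near $D_i$, where $X_i$ generates the $i$-th circle action. Nor can the correction vanish near $D$, because $\Omega$ itself does not vanish there:
\begin{itemize}
\item Where $\phi_j\equiv 1$, $\Omega=\pi_j^*\omega_{D_j}+\frac{\kappa_j}{2\pi}d\alpha_j$. This is nonzero unless the curvature of $\alpha_j$ is exactly $-\frac{2\pi}{\kappa_j}\omega_{D_j}$.
\item $\theta_0$ has period $2\pi a^2-\kappa_j$ on $\{\rho_j=a\}$ but period $0$ on circles outside $\operatorname{supp}\phi_j$. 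Hence $\Omega$ integrates to $2\pi b^2-\kappa_j\neq0$ over the transition annulus $\{a\le\rho_j\le b\}$ of a normal disc to $D_j$. Near $D_i\cap D_j$ such annuli lie arbitrarily close to $D_i$.
\end{itemize}
So ``$\theta$ agrees with $\theta_0$ near $D$'' is false in general.

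\textbf{A correct replacement.}
\begin{itemize}
\item $\Omega$ extends smoothly across $D$. The only singular part of $\theta_0$ is $-\frac{\kappa_j}{2\pi}\phi_j\alpha_j$, and $-\frac{1}{2\pi}d(\phi_j\alpha_j)$ extends to a Thom form of $D_j$.
\item Its class is $[\Omega]=[\omega]-\sum_j\kappa_j\operatorname{PD}[D_j]=0$ in $H^2(M)$. This is where the hypothesis on $[\omega]$ is actually used.
\item Write $\Omega=d\beta$ with $\beta$ smooth on all of $M$, and set $\theta=\theta_0+\beta|_X$.
\item You then need no formula for $Z$, only the identity $Z(H)=\theta(X_H)$, which follows from $\iota_Z\omega=\theta$ and $\iota_{X_H}\omega=-dH$. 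In your model $X_{\rho_i^2}=X_i$ vanishes along $D_i$, so $\beta(X_i)=O(\rho_i)$.
\item Choose the connections so that $\alpha_j(X_i)=0$ for $j\neq i$; this is what orthogonality and the commuting actions provide. Then $Z(\rho_i^2)=(\rho_i^2-\kappa_i/2\pi)\phi_i+O(\rho_i)$, which is negative near $D_i$ uniformly, corners included.
\item Take $f=\sum_j\psi_j(\rho_j)$ with $\psi_j$ non-increasing, equal to $-\log\rho_j$ near $0$, and constant for $\rho_j\ge2\delta$, where $\delta$ is small enough that $Z(\rho_j^2)<0$ on $\{\rho_j\le2\delta\}$. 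Every term of $df(Z)$ is then $\ge0$, and the term for any $\rho_j<\delta$ is $>0$. This removes your ``main obstacle''.
\end{itemize}
The paper's sketch instead arranges the model form only on the fibres. That is all the identity $Z(H_i)=\theta(X_{H_i})$ uses.

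\textbf{Your completeness claim is false.} The Liouville flow on $X$ is never complete. If the forward flow $\varphi_t$ existed for all $t\ge0$, then $\varphi_t^*\omega=e^t\omega$ would give $\operatorname{vol}(\varphi_t(K))=e^{nt}\operatorname{vol}(K)$ for $K=\{f\le c\}$, with $2n=\dim M$. That is impossible inside the finite-volume $M$. Concretely, $Z(\rho_i^2)\approx-\kappa_i/2\pi$ means trajectories reach $D_i$ in finite time. Properness of $f$ together with $df(Z)>0$ does not give completeness.

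Completeness is not part of Definition~\ref{ConvexFiniteTypeDefn}, so simply drop the claim. The complete Liouville manifold is the completion of $\{f\le c\}$. It has infinite volume, so it is not symplectomorphic to $(X,\omega|_X)$; this is why Theorem~\ref{radialFunctionThm} states the comparison as an equivalence.

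With these two corrections your argument is complete, modulo the standard-neighbourhood lemma (McLean's Lemma 5.14). You, like the paper, take that lemma as input.
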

        A proof of this theorem as given in \cite{mclean2012growth} is of two steps. First we find tubular neighborhoods of each of the divisors, using a Moser type argument, having "nice" properties as per \cite{mclean2012growth} and then perform an exact perturbation of the primitive $1$-form to get the desired structure. We illustrate these arguments in the setting when $D$ is smooth. That is, $D\subset M$ is a closed, connected symplectic divisor, such that $[\omega]=\kappa \operatorname{pd}[D]$ in $H^2(M;\mathbb R)$ where $\kappa>0$ is a real number.\\
        Let $[\omega]^{rel}\in H^2(M,X;\mathbb R)$ be a lift of $[\omega]\in H^2(M;\mathbb R)$ as above, and using the deRham model of $H^2(M,X;\mathbb R)$ we fix a lift $\omega^{rel}=(\omega, -\theta)$ of $\omega$, where $d\theta=\omega|_X$. By Lemma $5.9$ of \cite{mclean2012growth}, it follows that there exists a tubular neighborhood $D\subseteq UD$ together with a symplectic fibration $UD\rightarrow D$ such that its fibre is symplectomorphic to the standard disk of area $\pi A^2$ for some real number $A>0$. Using polar coordinates $(r, \phi)$ on the complex plane, we have $\omega_{std}=rdr\wedge d\phi$. Thus we have a symplectic identification of a fibre of $UD\rightarrow D$ with $\{r\leq A\}$. Denote by $\rho:UD\rightarrow [0, A^2/2)$ the smooth radial function given by $\rho\equiv r^2/2$ and note that the Hamiltonian vectorfield of $\rho$ is $X_\rho=\partial_\phi$. In particular, the flow of $X_\rho$ produces a Hamiltonian $S^1$-action on $UD$ which is free on $UD\setminus D$ and fixes each point of $D\subset UD$. Now fix a fibre $F$ of $UD\rightarrow D$ and consider $F\setminus\{0\}$ which is symplectically identified with $D_A\setminus\{0\}:=\{0<r\leq A\}\subset\mathbb C$. Observe that, $\frac{1}{2\pi}(\pi r^2-\kappa)d\phi$ is a primitive of $\omega_{std}$ on $D_A\setminus\{0\}$. By exactness of $\omega|_X$ and as $\theta$ is a primitive of $\omega|_X$, it follows that there exists a smooth function $f$ such that $\theta-\frac{1}{2\pi}(\pi r^2-\kappa)d\phi=df$. Now set $\theta':=\theta-df$ and $Z$ its associated Liouville vectorfield. That is the unique vectorfield such that $\iota_Zd\theta'=\theta'$. Moreover, $\theta'(X_\rho)=\rho-\kappa<0$ for $\pi A^2<\kappa$. Therefore, since $\kappa>0$ and hence for $A$ small enough it follows that $Z$ points outward on $M\setminus UD$. \\
       
        As for the case when $D$ is an orthogonal simple crossing, we recall that we have fixed a lift $[\omega]^{rel}\in H^2(M,X;\mathbb R)$ of $[\omega]$ such that $[\omega]^{rel}=\sum_{j=1}^N\kappa_j\operatorname{PD}^{rel}[D_j]$ where $\operatorname{PD}^{rel}$ denotes the Poincare-Lefsechtz duality and $\kappa_j=\kappa\lambda_j$ for $j=1, \dots, N$. Moreover we have fixed a representative $(\omega, -\theta)$ in $\Omega^2(M,X;\mathbb R)$. Using Lemma $5.14$ of \cite{mclean2012growth}, we can find for each $j=1, \dots, N$ tubular neighborhoods $D_j\subset UD_j$ as in the single component case, together with smooth radial functions $r_j:UD_j\rightarrow [0, A^2/2)$ such that:
        \begin{enumerate}
            \item $UD_i\cap UD_j$ is invariant under the Hamiltonian $S^1$-action given by $X_{r_i}$, 
            \item $r_i$ and $r_j$ Poisson commute on $UD_i\cap UD_j$.
        \end{enumerate}
		Moreover, for any subset $I\subseteq\{1,\dots, N\}$ and if we denote by $D_I:=\bigcap_{i\in I}D_i$, there exists a tubular neighborhood $D_I\subset UD_I$ together with a symplectic fibration $UD_I\rightarrow D_I$ of fibre $F_I=\Pi_{i\in I}D_A$ polydisk each of area $\pi A^2$ with a Hamiltonian $(S^1)^{|I|}$-action generated by $X_{r_i}$ for $i\in I$. Similarly, as above we find an exact perturbation of $\theta$ so that on $\Pi_{i\in I} D_A\setminus\{0\}$, $\theta'=\frac{1}{2\pi}\sum_{i\in I}(\pi r_i^2-\kappa_i)d\phi_i$. The Liouville vectorfield $Z$ given by $\theta'$ satisfies $Z(r_i^2/2)=r_i^2/2-\kappa_i$. Thus, if we choose $0<A<\min_{i=1,\dots, N}\kappa_i$, it follows that $Z$ points outwards along $M\setminus\bigcup_{i=1}^N\{r_i\leq A\}$.\\

        In fact, following \cite{borman2022quantum} and following the notation as above, we can also arrange the above radial functions as follows.
        \begin{thm}[\cite{borman2022quantum}]\label{radialFunctionThm}
            There exists a smooth exhausting function $\rho:X\rightarrow \mathbb R_{\geq0}$ such that:
            \begin{enumerate}
                \item $Z(\rho)=\rho$, and $\rho$ defines a Liouville structure which is convex and of finite-type on $X$ as in Definition \ref{ConvexFiniteTypeDefn},
                \item the completion of the Liouville domain $\overline{X}$ given by $\{\rho\leq 1\}$ is $\textit{Liouville}$ equivalent to $X$ as in Defintion \cite{mclean2012growth},
                \item $\rho$ has a continuous extension on $M$,
                \item if $h:\mathbb R\rightarrow \mathbb R$ is a smooth function such that $h$ is constant in a neighborhood of $0\in\mathbb R$ then, $h\circ\rho:M\rightarrow \mathbb R$ is a smooth function. 
            \end{enumerate}
        \end{thm}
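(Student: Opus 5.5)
We construct $\rho$ by gluing the local radial data of the tubular neighbourhoods $UD_I$ from the discussion after Theorem \ref{McLeanStru}, arranged so that $\rho$ grows exactly linearly along the flow of $Z$. Near the divisors we have $\theta'=\frac{1}{2\pi}\sum_{i\in I}(\pi r_i^2-\kappa_i)d\phi_i$ on the punctured polydisk fibres. Write $s_i:=\kappa_i-\pi r_i^2>0$. A direct computation, using $Z(r_i^2/2)=r_i^2/2-\kappa_i$ after normalising constants, shows that $Z(s_i)$ is a positive multiple of $s_i$. So each $-\log s_i$ is a coordinate in which $Z$ acts by translation along the $i$-th direction. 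A first candidate is $\rho_I:=\prod_{i\in I}s_i^{-c_i}$, or better a smoothed maximum of $s_i^{-1}$ suitably normalised, chosen so that $Z(\rho_I)=\rho_I$ on $UD_I\setminus D$. The Poisson-commutation and $S^1$-invariance of the $r_i$ on overlaps, given by Lemma 5.14 of \cite{mclean2012growth}, make these local functions compatible with the torus actions.

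The functions are glued in three steps.
\begin{enumerate}
\item Fix a Liouville domain $\overline X\subset X$ whose boundary is transverse to $Z$. Such a domain exists because $Z$ points outward along $M\setminus\bigcup_i\{r_i\le A\}$.
\item Define $\rho$ on the complement of $\overline X$ as the unique function with $Z\rho=\rho$ and $\rho|_{\partial\overline X}=1$, i.e. $\rho(\phi^Z_t(x))=e^t$ for $x\in\partial\overline X$. Extend $\rho$ inside by any function less than $1$ that is smooth and compatible at the boundary.
\item Choose $\partial\overline X$ near the strata $D_I$ to be a level set of a smoothing of $\max_i s_i^{-1}$. This is possible because each $s_i^{-1}$ scales under the flow, so the level sets of the smoothed max are $Z$-transverse. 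Orthogonality of the divisors keeps the corners compatible.
\end{enumerate}

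With these choices, properties (1) and (2) are essentially by construction. Exhaustion and finite type follow because $\rho\to\infty$ exactly as some $r_i\to\sqrt{\kappa_i/\pi}$, i.e. as we approach $D$ in the normalised coordinates. Liouville equivalence with $X$ follows since the completion of $\{\rho\le1\}$ is identified with $X$ via the flow of $Z$, which is complete by convexity, in the sense of \cite{mclean2012growth}. Property (3) comes from setting $\rho=+\infty$ on $D$, or equivalently considering $1/\rho$, which extends continuously by $0$. One should state the extension in the compactified sense, or with a reparametrisation, for the continuity statement to be meaningful.

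Property (4) is the main obstacle. We need $h\circ\rho$ smooth across $D$ when $h$ is constant near the relevant end, which is $+\infty$ in the $1/\rho$ picture, or near $0$ after reparametrising. Near $D_I$, $h\circ\rho$ is a function of the $s_i$ alone. It is constant wherever any $s_i$ is small enough, and otherwise depends smoothly on the $r_i^2$, which are smooth functions on $M$. The delicate point is the smoothing of the max at corners, where several $s_i$ are small simultaneously. There the smoothing must be arranged so that the region on which $h\circ\rho$ is nonconstant stays a definite distance from every $D_i$. We would enforce this by choosing the smoothing parameter uniformly inside $\bigcup_i\{r_i\le A\}$, following \cite{borman2022quantum}.
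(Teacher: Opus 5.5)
The paper gives no proof of this statement; it quotes it from \cite{borman2022quantum} after sketching McLean's local model. Measured against that model, your proposal has the end of $X$ pointing the wrong way, and this is a genuine gap.

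Since $Z(s_i)=s_i$, you get $Z(s_i^{-1})=-s_i^{-1}$ and $Z\bigl(\prod_i s_i^{-c_i}\bigr)=-\bigl(\sum_i c_i\bigr)\prod_i s_i^{-c_i}$. So no normalisation of your candidates satisfies $Z\rho_I=\rho_I$; the admissible local functions are degree-one homogeneous functions of the $s_i$ themselves. Moreover, $D_i=\{r_i=0\}$ is where $s_i=\kappa_i$ is \emph{maximal}. The locus $\pi r_i^2=\kappa_i$, where your $\rho$ blows up, lies outside $UD_i$ (the fibres have area $\pi A^2<\kappa_i$) and is not $D$.

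Because $s_i(\phi^Z_t(x))=e^{t}s_i(x)$ and $s_i\le\kappa_i$, every trajectory through $\partial\overline X$ reaches $D$ in finite time. This is forced anyway, since $X$ has finite volume while $\phi_t^*\omega=e^t\omega$. Consequently:
\begin{itemize}
\item The flow of $Z$ is not complete, and $X$ is not identified with the completion of $\{\rho\le1\}$ via the flow. Item (2) is Liouville equivalence in McLean's sense, not a symplectomorphism.
\item Your own Step 2, $\rho(\phi^Z_t(x))=e^t$, produces a \emph{bounded} function. Near $D_I$ it is a homogeneous function of the $s_i$, hence a smooth function of the $r_i^2$. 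It therefore extends smoothly, with finite values, across $D$, provided $\partial\overline X$ is torus-invariant there, as your Step 3 arranges.
\item That finite-valued extension, not $\rho=+\infty$ on $D$, is the content of (3).
\item The word ``exhausting'' in the statement cannot be read literally alongside (3) and (4). Finite type is witnessed by a different function, e.g.\ one built from $-\log r_i^2$, which satisfies $Z(-\log r_i^2)>0$ on $UD_i$.
\end{itemize}

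You also misread item (4). The hypothesis that $h$ is constant near $0$ concerns the skeleton, not $D$. Imposing $Z\rho=\rho$ on all of $X$ forces $\rho(\phi^Z_{-t}(x))=e^{-t}$ for $x\in\partial\overline X$ and $\rho=0$ on the skeleton. There $\rho$ is in general only continuous (compare $\rho=|p|$ on $T^*S^1$), while it is smooth on $M\setminus\rho^{-1}(0)$, including across $D$.

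Your extension of $\rho$ inside $\overline X$ by an arbitrary function less than $1$ violates $Z\rho=\rho$ and hides why the hypothesis is needed. Your reparametrised reading ($h$ constant near $D$) is incompatible with how $\rho$ is used later: $H_n=nh(\rho)+K_{n,t}$, with $h$ linear on $[\epsilon_-,\infty)$, must be smooth on $M$, and its orbits outside $X_\epsilon$ lie in $D$ (Theorem \ref{BasicCofinalHamThm}).

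Your global architecture can be kept: a $Z$-transverse, torus-invariant hypersurface near the $D_I$, with $\rho$ then defined by the flow. Replace $s_i^{-1}$ by $s_i$ and run the flow in both directions. Then (3) and (4) follow from smoothness of the $r_i^2$ on $M$ and continuity of $\rho$ at the skeleton. Near $D_I$ no corner-smoothing difficulty arises, since all $s_i$, $i\in I$, are close to $\kappa_i>0$ there.
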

        \begin{rmk}
            As per the first item in the above theorem and following the terminology in \cite{strom2024maurer}, we will call $\rho$ $\textit{radial}$ function. 
        \end{rmk}

        \section{Domain Moduli Spaces}\label{treeChapter}

        Following \cite{borman2024l_}, we introduce the following notations.
			\begin{defn}
			Let $\R_k:=\{(z_0, \dots, z_k)\in\operatorname{Conf}_{k+1}(\CP^1)\}/\operatorname{PSL}_2(\mathbb{C})$ be the moduli space of distint marked points on $\CP^1$. 
		\end{defn}
		\begin{prop}[\cite{seidel2008fukaya}]
			For $k\geq2$, $\R_k$ is a complex analytic manifold of $\dim_\mathbb{C}\R_k=k-2$. 
		\end{prop}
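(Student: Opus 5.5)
The statement is the proposition (cited from \cite{seidel2008fukaya}) that for $k\geq 2$, $\R_k$ is a complex analytic manifold with $\dim_\mathbb{C}\R_k=k-2$. The plan is to show that $\operatorname{PSL}_2(\mathbb{C})$ acts freely and properly on $\operatorname{Conf}_{k+1}(\CP^1)$ whenever $k+1\geq 3$, and that the quotient has an explicit global slice. The key input is that $\operatorname{PSL}_2(\mathbb{C})$ acts simply transitively on ordered triples of distinct points of $\CP^1$: a Möbius transformation is determined by the images of three distinct points, and for any two such triples there is a unique transformation carrying one to the other.

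First, I will build a global chart by normalization. Given $(z_0,\dots,z_k)\in\operatorname{Conf}_{k+1}(\CP^1)$, let $\phi$ be the unique Möbius map with
\[
\phi(z_0)=\infty,\qquad \phi(z_1)=0,\qquad \phi(z_2)=1.
\]
Explicitly $\phi$ is a cross-ratio map, so it depends holomorphically on $(z_0,z_1,z_2)$ in the configuration space. Define
\[
\Phi\colon \operatorname{Conf}_{k+1}(\CP^1)\to \operatorname{Conf}_{k-2}\bigl(\mathbb{C}\setminus\{0,1\}\bigr),\qquad \Phi(z_0,\dots,z_k)=(\phi(z_3),\dots,\phi(z_k)).
\]
This map is holomorphic and invariant under the $\operatorname{PSL}_2(\mathbb{C})$-action. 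It is surjective, since the configuration $(\infty,0,1,w_3,\dots,w_k)$ maps to $(w_3,\dots,w_k)$. Its fibres are exactly the group orbits, by uniqueness of the normalizing map.

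Next, I will check that $\Phi$ induces a homeomorphism
\[
\R_k\cong \operatorname{Conf}_{k-2}\bigl(\mathbb{C}\setminus\{0,1\}\bigr),
\]
an open subset of $\mathbb{C}^{k-2}$. Equivalently, the map
\[
\operatorname{PSL}_2(\mathbb{C})\times \operatorname{Conf}_{k-2}\bigl(\mathbb{C}\setminus\{0,1\}\bigr)\to \operatorname{Conf}_{k+1}(\CP^1),\qquad (g,w)\mapsto g\cdot(\infty,0,1,w)
\]
is a biholomorphism. Its inverse is $z\mapsto(\phi_z^{-1},\Phi(z))$, which is holomorphic. This product decomposition gives freeness and properness of the action at once, and it endows $\R_k$ with the quotient complex structure. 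The dimension is then $\dim_\mathbb{C}\R_k=(k+1)-3=k-2$. The degenerate case $k=2$, where $\R_2$ is a point, is included, with $\operatorname{Conf}_0$ understood as a point.

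The only step needing care is the claim that $\phi$ depends holomorphically on $(z_0,z_1,z_2)$ across charts of $\CP^1$, for instance when some $z_i=\infty$. I would handle this by writing $\phi$ as a matrix with entries polynomial in homogeneous coordinates of the three points. Its determinant is a product of the pairwise $2\times2$ determinants, hence nonvanishing on configurations. Apart from this, the argument is routine, and no serious obstacle is expected.
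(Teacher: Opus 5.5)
Your proof is correct. The paper gives no argument of its own here and only cites \cite{seidel2008fukaya}. Your normalization of $(z_0,z_1,z_2)$ to $(\infty,0,1)$ is the standard argument for this fact. It gives an equivariant biholomorphism $\operatorname{Conf}_{k+1}(\CP^1)\cong\operatorname{PSL}_2(\mathbb{C})\times\operatorname{Conf}_{k-2}(\mathbb{C}\setminus\{0,1\})$, and hence identifies $\R_k$ with an open subset of $\mathbb{C}^{k-2}$ (a point when $k=2$).

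Your handling of the one delicate point also works. Rescaling any homogeneous representative of $z_0$, $z_1$ or $z_2$ rescales your whole matrix. So the matrix gives a well-defined holomorphic map into $\operatorname{PGL}_2(\mathbb{C})=\operatorname{PSL}_2(\mathbb{C})$, with determinant, up to sign, the product of the three pairwise determinants.
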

		\begin{defn}
			Let $\R^{\text{disk}}_k:=\{(z_0, \dots, z_k)\in \operatorname{Conf}^{or}_{k+1}(\partial\mathbb{D})\}/\operatorname{PSL}_2(\mathbb{Z})$ be the moduli space of cyclically ordered, with respect to the anti-clockwise orientation on $\partial\mathbb{D}$, $(k+1)$-distinct boundary points on $\mathbb{D}$. 
		\end{defn}
		\begin{prop}[\cite{seidel2008fukaya}]
			For $k\geq 2$, $\R^{\text{disk}}_k$ is a smooth manifold of $\dim_\mathbb{R}\R^{\text{disk}}_k=k-2$. 
		\end{prop}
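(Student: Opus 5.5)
The plan is to produce an explicit global chart for $\R^{\text{disk}}_k$, which shows at once that it is a smooth manifold and that its dimension is $k-2$. Throughout, the group acting should be the automorphism group of the disk, $\operatorname{Aut}(\mathbb{D})\cong\operatorname{PSL}_2(\mathbb{R})$. The $\operatorname{PSL}_2(\mathbb{Z})$ in the definition I read as a typo for this group, since otherwise the quotient is not the intended moduli space. I will transport everything to the upper half plane $\mathbb{H}$ by a Cayley transform. This identifies $\partial\mathbb{D}$ with $\RP^1=\mathbb{R}\cup\{\infty\}$, turns the anticlockwise orientation into the standard increasing orientation of $\mathbb{R}$, and makes $\operatorname{PSL}_2(\mathbb{R})$ act by real M\"obius transformations.

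\textbf{Step 1: simple transitivity on triples.} I first show that $\operatorname{PSL}_2(\mathbb{R})$ acts simply transitively on cyclically ordered triples of distinct points of $\RP^1$. Transitivity holds because, for distinct $a,b,c$, the cross-ratio map
\[
w\mapsto \frac{(w-b)(c-a)}{(w-a)(c-b)}
\]
has real coefficients and sends $(a,b,c)\mapsto(\infty,0,1)$. Its determinant is $(c-a)(b-a)(c-b)$ up to sign, and this is positive exactly when the triple is in cyclic order, so the map lies in $\operatorname{PSL}_2(\mathbb{R})$ (orientation preserving on $\RP^1$) precisely in that case. Freeness holds because a M\"obius map fixing three points is the identity.

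\textbf{Step 2: the slice.} Given $(z_0,\dots,z_k)\in\operatorname{Conf}^{or}_{k+1}(\partial\mathbb{D})$ with $k\geq2$, let $g$ be the unique element of Step 1 with
\[
g(z_0)=\infty,\qquad g(z_1)=0,\qquad g(z_2)=1.
\]
Cyclic order then forces $1<g(z_3)<\dots<g(z_k)<\infty$. Define
\[
\Phi\colon \R^{\text{disk}}_k\to U_k:=\{(t_3,\dots,t_k)\in\mathbb{R}^{k-2} : 1<t_3<\dots<t_k\},\qquad [z]\mapsto (g(z_3),\dots,g(z_k)).
\]
This map is well defined on orbits by the uniqueness in Step 1. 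It is bijective, with inverse $t\mapsto[(\infty,0,1,t_3,\dots,t_k)]$. For $k=2$, $U_2$ is a point.

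\textbf{Step 3: smoothness, and the main obstacle.} The map $\Phi$ is induced by a smooth map $\operatorname{Conf}^{or}_{k+1}\to U_k$: the coefficients of $g$ are rational functions of $(z_0,z_1,z_2)$ with nonvanishing denominators, since the points are distinct. The section $t\mapsto(\infty,0,1,t)$ is also smooth. The quotient topology therefore agrees with the one pulled back from the open set $U_k\subset\mathbb{R}^{k-2}$. Equivalently, the action is free and proper, because the stabilizer of any triple is trivial and the triple map identifies $\operatorname{PSL}_2(\mathbb{R})$ with the space of cyclically ordered triples. Hence the quotient is a smooth manifold of dimension $(k+1)-3=k-2$, and $\Phi$ is a global diffeomorphism onto $U_k$.

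The only genuine subtlety is this step: checking that the quotient topology is Hausdorff and that the chart is compatible with the quotient smooth structure. I would handle both by exhibiting the smooth global section above, so that $\operatorname{Conf}^{or}_{k+1}\cong \operatorname{PSL}_2(\mathbb{R})\times U_k$ equivariantly. The remaining sign check in Step 1, that cyclic order corresponds to positive determinant, is routine.
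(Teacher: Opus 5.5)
Your proof is correct, including your reading of $\operatorname{PSL}_2(\mathbb{Z})$ as a typo for $\operatorname{Aut}(\mathbb{D})\cong\operatorname{PSL}_2(\mathbb{R})$. The paper gives no argument of its own and only cites Seidel, and your global slice fixing $z_0,z_1,z_2$ at $\infty,0,1$ is the standard normalization argument behind that citation; one cosmetic point is that the cross-ratio formula and the smoothness claim in Step 3 should be read in the disk model (with $a,b,c\in\partial\mathbb{D}\subset\mathbb{C}$), so that no input point sits at $\infty$.
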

	Denote by $\overline{\R}_k$ the Deligne-Mumford compactification of $\R_k$ and consider the embedding of $\R^{\text{disk}}_k\hookrightarrow\R_k$ induced by the embedding $\mathbb{D}\hookrightarrow\CP^1$ mapping $\partial\mathbb{D}$ into the equator. The Deligne-Mumford compactification $\overline{\R}^{\text{disk}}_k$ of $\R^{\text{disk}}_k$ is the closure of the image of the above embedding in $\overline{\R}_k$. Moreover, such compactification is stratified by smooth manifolds, modelled over ribbon trees.
	\begin{defn}[$k$-leafed Tree]
		A $k$-leafed tree is a connected, cycle-free, directed, rooted, ribbon graph $T$ with $(k+1)$-semi-infinite edges such that:
		\begin{enumerate}
			\item All its vertices are internal vertices. We denote the set of all vertices of $T$ by $V(T)$. 
			\item For each $v\in V(T)$, there exists a unique edge $e_{in}(v)$ adjacent to $v$ called the $\textit{incoming edge}$ of $v$.
			\item If we denote by $e(v)$ the set of all edges adjacent to $v\in V(T)$, then we require that $e(v)\setminus e_{in}(v)\neq\emptyset$, which we call the $\textit{outgoing edges}$ of $v$. 
			\item If we denote by $E(T)$ the set of all edges of $T$ then, we require that there exists a distinguished $\textit{incoming}$ semi-infinite edge $e\in E(T)$ called the $\textit{root}$. All the other semi-infinite edges of $T$ are called $\textit{leaves}$.  We also denote by $iE(T):=E(T)\setminus\{\text{root, leaves}\}$ the set of all internal edges.
		\end{enumerate}
		We denote by $\mathcal{T}(k)$ the set of all $k$-leafed trees. 
	\end{defn}
	\begin{defn}
		Two $k$-leafed trees $T$ and $T'$ are said to be equivalent if they have isotopic embeddings in $\mathbb{R}^3$. 
	\end{defn}
	\begin{defn}
		We denoted by $\mathcal{T}^{or}(k)\subseteq\mathcal{T}(k)$ the set of all $k$-leafed trees with ribbon structure.
		We use the ribbon structure to label all the semi-infinite edges of a given $T\in\mathcal{T}^{or}(k)$ by $\{0, \dots, k\}$ where the root is labeled with $0$. 
	\end{defn}
	\begin{defn}
		A tree $T\in\mathcal{T}(k)$ is said to be stable if every $v\in V(T)$ has valency $|v|$ at least $3$. In particular, any vertex of a stable tree has at least two outgoing edges. We denote the set of all stable $k$-leafed trees by $\mathcal{T}_{st}(k)$. 
	\end{defn}
	
	\begin{thm}[\cite{borman2024l_}]
		$\overline{\R}^{\text{disk}}_k$ is a compact smooth manifold with boundaries and corners of $\dim_\mathbb{R}\overline{\R}^{\text{disk}}_k=k-2$. Moreover, it admits a stratification
		\begin{equation*}
			\overline{\R}^{\text{disk}}_k=\bigcup_{T\in\mathcal{T}^{or}_{st}(k)}\R^{\text{disk}}_T
		\end{equation*}
		where
		\begin{equation*}
			\R^{\text{disk}}_T:=\Pi_{v\in V(T)}\R^{\text{disk}}_{|v|-1}
		\end{equation*}
		is a smooth manifold of $\dim_\mathbb R	\R^{\text{disk}}_T=k-2-|iE(T)|$.
	\end{thm}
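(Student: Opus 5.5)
Since the statement is a cited theorem (the stratification of $\overline{\R}^{\text{disk}}_k$ by ribbon trees), I would prove it by reducing to the known structure of $\overline{\R}_k$ and passing to the real locus.

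\textbf{Step 1: a real structure on $\overline{\R}_k$.} I would realize $\overline{\R}^{\text{disk}}_k$ as a union of connected components of the real locus of $\overline{\R}_k$. Complex conjugation $z\mapsto\bar z$ on $\CP^1$ fixes the equator. It induces an antiholomorphic involution $\sigma$ on $\overline{\R}_k$. This space is a smooth projective variety of complex dimension $k-2$; I take this from Deligne--Mumford--Knudsen, beyond the stated fact that $\R_k$ is a manifold.

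\textbf{Step 2: identify the open stratum and its closure.} Configurations of distinct points on $\RP^1=\partial\mathbb D$ with a fixed cyclic order form one connected component of $\operatorname{Fix}(\sigma)\cap\R_k$. After fixing $z_0=\infty$, $z_1=0$, $z_2=1$, this component is the open simplex $\{1<z_3<\dots<z_k\}$. Its closure in $\overline{\R}_k$ therefore lies in the real locus. That real locus is a smooth real manifold of dimension $k-2$, because it is the fixed set of an antiholomorphic involution on a smooth variety.

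\textbf{Step 3: boundary strata and corner charts.} Near a nodal stable curve, $\overline{\R}_k$ has holomorphic gluing coordinates $(\zeta_e)_{e}$, one for each node $e$, in which the node's stratum is $\{\zeta_e=0\}$. For a real nodal curve with all nodes on real components joined at real points, $\sigma$ acts by $\zeta_e\mapsto\bar\zeta_e$. So the real locus is locally $\mathbb R^{|iE|}\times\prod_v \R^{\text{disk}}_{|v|-1}$. The closure of the cyclically ordered component only sees gluing parameters of one sign, namely $\zeta_e\geq0$, since only then is the cyclic order of the glued points preserved. This yields the corner chart $[0,\epsilon)^{|iE(T)|}\times\R^{\text{disk}}_T$. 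Codimension-$m$ corners correspond to trees with $m$ internal edges, giving $\dim \R^{\text{disk}}_T=k-2-|iE(T)|$. Each $\R^{\text{disk}}_{|v|-1}$ has the expected dimension $|v|-3$, and summing gives $\sum_v(|v|-3)=k-2-|iE(T)|$, which is a routine Euler-characteristic count for trees.

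\textbf{Step 4: which trees occur.} Stability of the curve is exactly valency at least $3$ at every vertex. The planar order of boundary points on each component is the ribbon structure. Real nodes joining disks form trees, and the root is the component containing $z_0$. So the strata are indexed by $\mathcal T^{or}_{st}(k)$.

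\textbf{Compactness and main obstacle.} Compactness is immediate, since $\overline{\R}^{\text{disk}}_k$ is a closed subset of the compact space $\overline{\R}_k$. The main obstacle is Step 3: showing that the smooth structure on the real locus restricts to genuine manifold-with-corners charts, with the sign condition $\zeta_e\geq0$. I would first verify this by explicit cross-ratio coordinates, in the style of Seidel's book.
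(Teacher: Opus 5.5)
Your outline is correct. The paper offers no competing argument: the statement is simply quoted from \cite{borman2024l_}. Your route is, however, the one the paper's own set-up points to. The paper defines $\overline{\R}^{\text{disk}}_k$ as the closure of the image of $\R^{\text{disk}}_k$ in $\overline{\R}_k$, and it proves the neighbouring statement about $\R^{\text{disk}}_{d,k}$ by passing to the fixed locus of complex conjugation. What your version adds is a self-contained reduction to the Deligne--Mumford--Knudsen structure of $\overline{\R}_k$, together with a local sign analysis at the nodes.

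One point in Step 3 should be made explicit, because it is exactly where stability is used. Components of $\operatorname{Fix}(\sigma)\cap\R_k$ are indexed by dihedral rather than cyclic orders, since $z\mapsto -z$ lies in $\operatorname{PSL}_2(\mathbb C)$ and reverses $\RP^1$. So ``only $\zeta_e\geq 0$ preserves the cyclic order'' requires checking that the other sign lands in a different component.

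\emph{Single edge.} If $e$ splits the leaves as $A\sqcup B$, flipping the sign of $\zeta_e$ reverses the arc $B$ relative to $A$. The resulting dihedral order equals the original one only if $|A|=1$ or $|B|=1$, and stability excludes this.

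\emph{All edges.} Every vertex carries at least three branches, so the dihedral order of the leaves determines the relative orientation of the real circles of any two adjacent components. Hence the $2^{|iE(T)|}$ open orthants at a point of $\R^{\text{disk}}_T$ lie in pairwise distinct components, and the closure is locally a single closed orthant.

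\emph{Real coordinates.} The conjugation-compatible gluing coordinates you use exist because every curve in the real locus is real component by component. An antiholomorphic involution fixing all leaves of a stable tree fixes every vertex and every node.
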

	Following the work of Kontsevich \cite{kontsevich2003deformation}, we will introduce a Fulton-Macpherson type of compactification of $\R^{\text{disk}}_{d, k}$, which we denote by $\overline{\R}^{\text{disk}}_{d, k}$. Such compactification, which is different than the Deligne-Mumford one as in \cite{seidel2008fukaya}, is modelled over $2$-colored trees. 
	\begin{defn}
		Let $\underline{\R}_k$ be a Fulton-Macpherson type of compactification of $\R_k$ given by real-blow up along the normal crossing divisor $\overline{\R}_k\setminus\R_k=\bigcup_{k_1+k_2=k+2}\overline{\R}_{k_1}\times\overline{\R}_{k_2}$.
	\end{defn}
	
	\begin{prop}[\cite{borman2024l_}]
		$\underline{\R}_k$ is an $S^1$-bundle over $\overline{\R}_k$. In particular, $\underline{\R}_k$ is a smooth manifold with boundaries and corners of $\dim_\mathbb R\underline{\R}_k=\dim_\mathbb R\overline{\R}_k +1$. 
	\end{prop}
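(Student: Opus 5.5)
The plan is to realize $\underline{\R}_k$ as the oriented real blow-up of $\overline{\R}_k$ along its boundary divisor. Then $\underline{\R}_k$ is a principal circle bundle over $\overline{\R}_k$, with the circle recording a relative angle at a node. Two facts about $\overline{\R}_k$ are used: for $k\geq 2$ it is a smooth compact complex manifold of complex dimension $k-2$ (Knudsen–Keel), and $\overline{\R}_k\setminus\R_k$ is a simple normal crossing divisor whose components are the images of the gluing maps $\overline{\R}_{k_1}\times\overline{\R}_{k_2}\to\overline{\R}_k$.

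\textbf{Step 1: local model.} Near a point of a codimension-$m$ stratum (a stable tree with $m$ nodes), the standard plumbing construction gives holomorphic coordinates $(g_1,\dots,g_m,w)$. Here $g_i$ are the gluing parameters, $w$ are coordinates along the stratum, and the divisor is $\{g_1\cdots g_m=0\}$. The oriented real blow-up replaces each factor $\mathbb{C}_{g_i}$ by $[0,\infty)\times S^1$ via $g_i=r_ie^{i\theta_i}$. The local model upstairs is therefore $[0,\epsilon)^m\times (S^1)^m\times\mathbb{C}^{k-2-m}$, which is a manifold with corners of real dimension $2(k-2)+m$.

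\textbf{Step 2: transition maps.} A change of gluing parameters has the form $g_i'=g_i\,u_i(g,w)$ with $u_i$ holomorphic and nonvanishing. Hence $r_i'=r_i|u_i|$ and $\theta_i'=\theta_i+\arg u_i$, and both extend smoothly to $r_i=0$. So the charts glue to a smooth manifold with corners $\underline{\R}_k$ carrying a proper map to $\overline{\R}_k$. Taking the oriented blow-up stratum by stratum, and also keeping the global phase normalisation used in \cite{borman2024l_} (the tangent direction at $z_0$ modulo rotation), the fibre over each point is a single circle. This yields the $S^1$-bundle structure and the count $\dim_\mathbb{R}\underline{\R}_k=\dim_\mathbb{R}\overline{\R}_k+1$.

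\textbf{Main obstacle.} Strictly, the fibre of the naive blow-up over a codimension-$m$ point is $(S^1)^m$. To obtain a single circle fibre as stated, one must either quotient by the relative angles or adopt the framing convention of \cite{borman2024l_}, where the $S^1$ is the asymptotic marker at the root. I would follow the latter: define $\underline{\R}_k$ as the space of configurations together with a real tangent ray at $z_0$, and check compatibility with the boundary strata.
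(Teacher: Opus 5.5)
There is a genuine gap in Step~2, where you assert that, after adding the phase at $z_0$, ``the fibre over each point is a single circle''. Your own Step~1 contradicts this. The local model $([0,\epsilon)\times S^1)^m\times\mathbb{C}^{k-2-m}$ has real dimension $2m+2(k-2-m)=2(k-2)=\dim_{\mathbb{R}}\overline{\R}_k$, not $2(k-2)+m$; that is an arithmetic slip. The blow-down map $\underline{\R}_k\to\overline{\R}_k$ has fibre $(S^1)^m$ over a codimension-$m$ stratum, and it is a diffeomorphism over $\R_k$. Adjoining a ray at $z_0$ on top of the blow-up makes the fibres over $\overline{\R}_k$ equal to $(S^1)^{m+1}$. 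Collapsing the relative angles just blows back down to $\overline{\R}_k$. Neither gives circle fibres everywhere.

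Your fallback defines the space as curves in $\overline{\R}_k$ together with a real tangent ray at $z_0$. That is an $S^1$-bundle of the right dimension, but it is the unit circle bundle of the tangent line at $z_0$ over the closed manifold $\overline{\R}_k$. It has no boundary or corners and is not the real blow-up of the definition, so it proves a statement about a different object. In fact, with $\underline{\R}_k$ defined as the real blow-up along $\overline{\R}_k\setminus\R_k$, the proposition as printed is false, for exactly the reasons your Step~1 exhibits. The paper gives no proof here, only the citation.

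The version that is true, and consistent with $\dim_{\mathbb{R}}\R^{al}_d=2d-3$ later in the paper, presumably what the citation to \cite{borman2024l_} intends, is this: the compactification $\overline{\R}^{al}_k$ of the aligned-framed space is an $S^1$-bundle over the real blow-up $\underline{\R}_k$. It is therefore a manifold with corners of dimension $\dim_{\mathbb{R}}\underline{\R}_k+1=\dim_{\mathbb{R}}\overline{\R}_k+1$. Your Steps~1--2 (polar gluing coordinates $g_i=r_ie^{i\theta_i}$ with smooth transition maps) correctly build the base $\underline{\R}_k$.

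The missing idea is the second layer, the propagation of framings:
\begin{itemize}
\item On each smooth component, alignment means the framing at the output determines all input framings.
\item At each node, the blow-up records a ray in the tensor product of the two tangent lines at the node. Given the framing at the input of the upper component, this ray determines the framing at the output of the lower component.
\item Inductively down the tree, every framing is therefore determined by the single framing $\phi$ at $z_0$. Near a codimension-$m$ corner this gives coordinates $(r_i,\theta_i,w,\phi)$.
\item The recorded ray is intrinsic, so the transported framings do not depend on the choice of gluing coordinates. Hence the circle $\phi$ patches to a global $S^1$-bundle over $\underline{\R}_k$, not over $\overline{\R}_k$.
\end{itemize}
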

    We will also need to consider the moduli space of disks with boundary and interior marked points. 
	
	\begin{defn}
		Let $\R^{\text{disk}}_{d, k}$ be the moduli space of smooth disks with $(k+1)$-cyclically ordered boundary marked points and $d$-interior marked points. 
	\end{defn}
	
	\begin{prop}[\cite{borman2024l_}]
		In the stable range, that is for $2d+k\geq 2$, $\R^{\text{disk}}_{d,k}$  is a smooth manifold of $\dim_\mathbb R \R^{\text{disk}}_{d,k}=2d+k-2$. 
	\end{prop}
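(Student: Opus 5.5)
The plan is to present $\R^{\text{disk}}_{d,k}$ as a quotient of a configuration space by a group acting freely and properly, and then count dimensions. I would set
\[
\widetilde{\R}_{d,k}:=\{(z_0,\dots,z_k;w_1,\dots,w_d)\},
\]
where $(z_0,\dots,z_k)\in\operatorname{Conf}^{or}_{k+1}(\partial\mathbb{D})$ are cyclically ordered anticlockwise and $(w_1,\dots,w_d)\in\operatorname{Conf}_d(\operatorname{int}\mathbb{D})$. This is an open subset of $(\partial\mathbb{D})^{k+1}\times(\operatorname{int}\mathbb{D})^d$, so it is a smooth manifold of real dimension $(k+1)+2d$. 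The automorphism group $G:=\operatorname{Aut}(\mathbb{D})\cong\operatorname{PSL}_2(\mathbb{R})$ is a $3$-dimensional Lie group. It acts smoothly on $\widetilde{\R}_{d,k}$ by acting diagonally on the marked points, preserving both the cyclic order and the interior/boundary distinction. By definition $\R^{\text{disk}}_{d,k}=\widetilde{\R}_{d,k}/G$. Hence once the action is free and proper, the quotient manifold theorem gives a smooth structure with $\dim_\mathbb{R}=2d+k+1-3=2d+k-2$.

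Freeness in the stable range splits into three cases.
\begin{enumerate}
\item If $k+1\geq 3$, an element fixing three distinct boundary points is the identity, since a Möbius transformation fixing three points of $\CP^1$ is trivial.
\item If $d\geq 1$ and $k+1\geq 1$, conjugate so that $w_1=0$. The stabiliser of $0$ is the rotation group $S^1$, and a rotation fixing the boundary point $z_0$ is trivial.
\item If $k+1=0$, the stability condition $2d+k\geq 2$ forces $d\geq 2$ (note $d=1,k=-1$ gives $2d+k=1$). Put $w_1=0$ and observe that a rotation fixing $w_2\neq 0$ is trivial.
\end{enumerate}
The stability inequality $2d+k\geq2$ is exactly what guarantees that one of these cases applies. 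For example, $d=1,k=0$ falls under case 2, and $d=0,k\geq 2$ falls under case 1.

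Properness is where I expect the only genuine work. I would avoid analysing sequences in $G$ directly and instead construct a global slice, which gives freeness, properness and the smooth structure at once. In each case I would normalise as follows.
\begin{enumerate}
\item For $k\geq2$, fix $(z_0,z_1,z_2)=(1,i,-1)$ (or any fixed anticlockwise triple). The slice is then the set of admissible remaining configurations, an open subset of $(\partial\mathbb{D})^{k-2}\times(\operatorname{int}\mathbb{D})^d$.
\item For $d\geq1$ and $k\geq0$, fix $w_1=0$ and $z_0=1$. The slice is an open subset of $(\partial\mathbb{D})^{k}\times(\operatorname{int}\mathbb{D})^{d-1}$.
\item For $k=-1$ and $d\geq2$, fix $w_1=0$ and $w_2\in(0,1)$ real. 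The slice is an open subset of $(0,1)\times(\operatorname{int}\mathbb{D})^{d-2}$.
\end{enumerate}
In every case, uniqueness of the normalising element of $G$ (shown above) together with its smooth dependence on the points makes the map $G\times\text{slice}\to\widetilde{\R}_{d,k}$ a diffeomorphism. For instance, in case 2 the normalising element is $w\mapsto e^{i\alpha}(w-w_1)/(1-\bar w_1 w)$, which depends smoothly on $w_1$ and $z_0$. Therefore $\R^{\text{disk}}_{d,k}$ is diffeomorphic to the slice. Counting dimensions gives $(k-2)+2d$, $k+2(d-1)$ and $1+2(d-2)$ respectively, each equal to $2d+k-2$, which proves the proposition.
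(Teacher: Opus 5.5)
Your proof is correct, but it takes a different route from the paper. The paper argues by doubling. Reflecting a disk with $k+1$ boundary and $d$ interior marked points across its boundary gives a sphere with $k+1$ real points and $d$ conjugate pairs, i.e.\ a point of $\R_{k+2d}$. Then $\R^{\text{disk}}_{d,k}$ is realized inside the fixed locus of the anti-holomorphic involution of $\R_{k+2d}$ given by complex conjugation composed with swapping each interior point with its mirror. Smoothness and the dimension $k+2d-2=\dim_{\mathbb C}\R_{k+2d}$ follow from the general fact that such a fixed locus is a totally real submanifold of half the real dimension. The stability condition $2d+k\geq 2$ is exactly the condition that $\R_{k+2d}$ carries at least three marked points.

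You instead build an explicit global slice for the $\operatorname{PSL}_2(\mathbb R)$-action on the configuration space. This gives freeness, properness and a diffeomorphism of $\R^{\text{disk}}_{d,k}$ onto an explicit open subset of a product of circles and disks all at once.

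The doubling argument is shorter, and it places $\R^{\text{disk}}_{d,k}$ inside $\R_{k+2d}$, which fits how the paper treats $\overline{\R}^{\text{disk}}_k$ as a closure inside $\overline{\R}_k$. As a one-liner, though, it suppresses two checks:
\begin{enumerate}
\item a fixed point of the involution comes from an anti-holomorphic involution of $\CP^1$ conjugate to standard conjugation, which uses $k+1\geq 1$;
\item the genuine disk configurations form an open and closed part of the fixed locus, cut out by the cyclic order and the choice of hemisphere.
\end{enumerate}
Your argument is self-contained and elementary, at the cost of a case split. It also covers $k=-1$, which the paper never needs; there the fixed locus of the involution acquires extra components coming from fixed-point-free real structures.
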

	
	\begin{proof}
		Realize $\R^{\text{disk}}_{d,k}$ as the fixed of point set of the smooth involution $\iota:\R_{k+2d}\rightarrow \R_{k+2d}$ given by complex conjugation. 
	\end{proof}
	
	\begin{defn}[$2$-colored Tree]
		We denote by $\mathcal{T}(d, k)\subset\mathcal{T}(d+k)$ the moduli space of $2$-colored trees where for $T\in\mathcal{T}(d, k)$, the $\textit{coloring data}$ on $T$ is a partition $E(T)=E^{d}(T)\sqcup E^{s}(T)$ of its edges into dashed or solid edges respectively, such that
		\begin{enumerate}
			\item The collection of all solid edges, together with their vertices, form a subtree which is an element of $\mathcal{T}^{or}(k)$. That is, $T\setminus E^{d}(T)\in\mathcal{T}^{or}(k)$. 
			\item Using the directed convention on $T\in\mathcal T(d+k)$, if an edge is flowing from a dashed edge, then such edge is also dashed. 
		\end{enumerate}
		Two $2$-colored trees are said to be equivanlent, if they have an isotopic embedding in $\mathbb{R}^3$, respecting the coloring/edge stratification.\\
		$T\in\mathcal{T}(d, k)$ is stable, if $2d+k\geq 3$ and we denote the subset of all stable $2$-colored trees by $\mathcal{T}_{st}(d, k)$. 
	\end{defn}
	Note that, for $T\in\mathcal{T}(d, k)$ the directed convention and the coloring data on $T$ induces a stratification of its vertices. Namely, $V(T)=V^{d}(T)\sqcup V^{s}(T)$ where $v\in V^{d}(T)\iff e_{in}(v)\in E^{d}(T)$.  
	\begin{prop}[\cite{merkulov2011operads}]
		$\overline{\R}^{\text{disk}}_{d,k}$ is a compact smooth manifold with boundaries and corners together with a stratification by smooth manifolds
		\begin{equation*}
			\overline{\R}^{\text{disk}}_{d, k}=\bigcup_{T\in\mathcal{T}_{st}(d, k)}\R^\text{disk}_T
		\end{equation*}
		where
		\begin{equation*}
			\R^{\text{disk}}_T:=\Pi_{v\in V^{s}(T)}\R^{\text{disk}}_{k_v, d_v}\times \Pi_{v\in V^{d}(T)}\R^{al}_{d_v}
		\end{equation*}
		where, $d_v$ is the number of outgoing dashed edges adjacent to $v$ and $k_v$ is the number of outgoing solid edges adjacent to $v$. Moreover, $\dim_\mathbb R \R^{\text{disk}}_{T}=2d+k-2-|iE(T)|$.
	\end{prop}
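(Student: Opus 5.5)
The compactification $\overline{\R}^{\text{disk}}_{d,k}$ is built Fulton--MacPherson style, following Kontsevich, in three steps: the open stratum, the local charts, and compactness. First I realize $\R^{\text{disk}}_{d,k}$ concretely. Use the $\operatorname{PSL}_2(\mathbb R)$-action on the upper half plane $\mathbb H$ (equivalently on $\mathbb D$). Then $\R^{\text{disk}}_{d,k}$ is the quotient of the configuration space of $d$ distinct points in $\mathbb H$ and $k+1$ cyclically ordered distinct points on $\partial\mathbb H$. Fixing $z_0=\infty$ reduces the group to the affine group $\{z\mapsto az+b,\ a>0\}$. In the stable range this action is free and proper, which by the earlier Proposition gives a manifold of dimension $2d+k-2$.

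Next I define the compactification. I take the closure of the image of the open stratum in a product of compactified ``relative position'' spaces. For each subset $S$ of the marked points of size at least $2$, I record the configuration of $S$ modulo translations and dilations: a point of a sphere if $S$ consists only of interior points, and a point of a half-space-type sphere if $S$ meets the boundary. I also record angles and normalized distances to $\partial\mathbb H$, as in Kontsevich's construction. $\overline{\R}^{\text{disk}}_{d,k}$ is then the closure of the open stratum in this compact product, so it is compact automatically.

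Then comes the stratification. A point of the closure determines a nested family of ``clusters'' of points that collide at comparable rates. Each cluster either stays away from $\partial\mathbb H$, giving a dashed vertex, or collides with or near the boundary, giving a solid vertex. I encode this as a tree in $\mathcal T_{st}(d,k)$:
\begin{itemize}
\item clusters of interior points colliding in the bulk give dashed vertices, whose configuration space is the bulk Fulton--MacPherson space $\R^{al}_{d_v}$ of $d_v$ points in $\mathbb C$ modulo translation and dilation;
\item clusters touching the boundary give solid vertices carrying $\R^{\text{disk}}_{k_v,d_v}$.
\end{itemize}
The coloring rule, that edges flowing from a dashed edge are dashed, reflects that a bulk cluster cannot contain a boundary point. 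I then check that the stratum labelled by $T$ is exactly $\prod_v(\cdots)$. Its dimension is $2d+k-2-|iE(T)|$, since each internal edge records one lost scaling parameter.

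The main obstacle is the manifold-with-corners structure, namely producing smooth charts near each stratum. For this I would follow the standard gluing-parameter construction. Given $T$ and points of the vertex factors, choose local slices for each vertex space, for example fixing two coordinates by the affine action. For each internal edge $e$, introduce a parameter $\epsilon_e\in[0,\delta)$ and insert the child configuration, rescaled by $\epsilon_e$, at the corresponding point of the parent.

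Two cases need separate treatment:
\begin{itemize}
\item a dashed child inserted at an interior point;
\item a solid child inserted at a boundary point, or a dashed child inserted into a solid parent near the boundary, where I use the scaling relative to the distance to $\mathbb R$ as the edge parameter.
\end{itemize}
I then show that this map $\prod_v\R_v\times[0,\delta)^{iE(T)}\to\overline{\R}^{\text{disk}}_{d,k}$ is a homeomorphism onto a neighborhood, and that transition maps are smooth. Smoothness follows because the relative-position coordinates are rational functions of the $\epsilon_e$ and the vertex coordinates, as in Kontsevich's and Merkulov's treatment. I expect verifying smoothness of transitions between charts of adjacent strata to be the most technical point. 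I would cite the semi-algebraic description of \cite{merkulov2011operads} rather than redo it.
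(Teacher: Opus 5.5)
This is essentially the paper's route. The paper gives no argument of its own and simply cites \cite{merkulov2011operads} for Kontsevich's half-plane (Fulton--MacPherson type) compactification; your outline reconstructs that construction (closure in a product of compactified relative-position spaces, clusters giving dashed or solid vertices, one scaling parameter per internal edge) and defers the smooth corner charts and transition maps to the same reference. One small slip in your second gluing case: a dashed child of a solid vertex is glued at an interior point exactly as in the bulk case, whereas the genuinely separate case is a solid child (possibly with only interior leaves, e.g.\ a single interior point approaching $\partial\mathbb H$) glued at a boundary point.
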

	
	\subsubsection{Extra Decorations}
	\begin{defn}[Framing]
		Let $(C; z_0, \dots, z_d)\in\R_{d}$ be a smooth genus-zero Riemann surface with $(k+1)$-distinct marked points. A framing on $(C; z_0, \dots, z_d)$ at the marked point $z_i$ is a choice of direction $\theta_i\in\RP(T_{z_i}C)$ for every $i=0,\dots, d$.\\
		We say $(C; z_0, \dots, z_d)$ is framed if all its marked points are framed.\\
		A framing $\{\theta_0, \dots, \theta_d\}$ is said to be $\textit{aligned}$, if for every $i=1, \dots, d$, there exists a biholomorphism $\psi_i:C\rightarrow\mathbb P^1$ such that:
		\begin{enumerate}
			\item $\psi_1(z_0)=\infty$ and $\psi_i(z_i)=0$.
			\item $\psi_{i, *}(\theta_0)$ and $\psi_{i, *}(\theta_i)$ point in the positive real-direction.  
		\end{enumerate}
	\end{defn}
	Denote by $\operatorname{Aff}(\mathbb{C},\mathbb{R}_{>0}):=\{z\mapsto az+b:a\in\mathbb{R}_{>0}, b\in\mathbb{C}\}$.
	\begin{prop}[\cite{merkulov2011operads}]
		 The moduli space $\R^{al}_d:=\operatorname{Conf}_d(\mathbb{C})/\operatorname{Aff}(\mathbb{C},\mathbb{R}_{>0})$ of smooth genus-zero curves with $(d+1)$-marked points and aligned framing, has the structure of a smooth manifold of $\dim_\mathbb R\R^{al}_d=2d-3$.
	\end{prop}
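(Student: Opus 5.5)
The plan is to identify $\R^{al}_d$ explicitly as a quotient of an open manifold by a free and proper Lie group action, and then count dimensions. First I check that the two descriptions agree. Given a smooth genus-zero curve $(C;z_0,\dots,z_d)$ with an aligned framing, choose a biholomorphism $\psi\colon C\to\CP^1$ with $\psi(z_0)=\infty$ and with $\psi_*(\theta_0)$ pointing in the positive real direction. Such $\psi$ is unique up to post-composition with the automorphisms of $\CP^1$ that fix $\infty$ and preserve the framing direction there. These are the affine maps $z\mapsto az+b$ with $a\in\mathbb R_{>0}$ and $b\in\mathbb C$; rotations are excluded because they would rotate the direction at $\infty$.

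For the framings $\theta_i$ at $z_i$ with $i\geq 1$, I use the alignment condition together with the normalization $\psi_i(z_i)=0$, i.e.\ translating each $z_i$ to the origin. This pins down $\theta_i$ as the pullback of the positive real direction. Hence the framings $\theta_1,\dots,\theta_d$ carry no extra data once $\theta_0$ is fixed. The configuration $(\psi(z_1),\dots,\psi(z_d))\in\operatorname{Conf}_d(\mathbb C)$ therefore determines the framed curve, and it is well defined modulo $\operatorname{Aff}(\mathbb C,\mathbb R_{>0})$. This gives the bijection with $\operatorname{Conf}_d(\mathbb C)/\operatorname{Aff}(\mathbb C,\mathbb R_{>0})$.

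Next I prove smoothness. The space $\operatorname{Conf}_d(\mathbb C)$ is an open subset of $\mathbb C^d$, so it is a smooth manifold of real dimension $2d$. The group $G=\operatorname{Aff}(\mathbb C,\mathbb R_{>0})\cong\mathbb R_{>0}\ltimes\mathbb C$ is a Lie group of real dimension $3$.

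The action is free for $d\geq 2$. Indeed, if $az_i+b=z_i$ for two distinct points, then subtracting gives $a(z_1-z_2)=z_1-z_2$, so $a=1$, and then $b=0$.

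For properness I use a global slice, which also makes the smooth structure explicit. Translate so that $z_1=0$, then scale so that $|z_2|=1$. This gives a diffeomorphism
\[
\operatorname{Conf}_d(\mathbb C)\cong G\times S,\qquad S=\{(0,z_2,\dots,z_d)\in\operatorname{Conf}_d(\mathbb C): |z_2|=1\}.
\]
The map $(a,b,s)\mapsto as+b$ is smooth, with smooth inverse $b=z_1$ and $a=|z_2-z_1|$. Hence the quotient is diffeomorphic to $S$. Since $S$ is an open subset of $S^1\times\mathbb C^{d-2}$, the quotient is a smooth manifold of dimension $1+2(d-2)=2d-3$, matching $2d-3=2d-\dim G$.

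I expect the main obstacle to be bookkeeping in the identification step rather than smoothness: verifying that aligned framings contribute no additional parameters. This amounts to showing that the residual automorphism group is exactly $\operatorname{Aff}(\mathbb C,\mathbb R_{>0})$ and not the full affine group, so that $\theta_0$ kills only the $S^1$ of rotations. The count $3$ instead of $4$ for the group dimension is precisely what yields $2d-3$. The stable range is $d\geq 2$; for $d=1$ the action has positive-dimensional stabilizer, so the statement implicitly assumes $d\geq 2$, and I would note this.
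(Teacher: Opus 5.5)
Your argument is correct. The paper gives no proof of this proposition and simply cites Merkulov, where $\operatorname{Conf}_d(\mathbb C)/\operatorname{Aff}(\mathbb C,\mathbb R_{>0})$ is Kontsevich's configuration space. Your route is the standard one: a free action, together with the global slice $z_1=0$, $|z_2|=1$. That slice gives an equivariant diffeomorphism $\operatorname{Conf}_d(\mathbb C)\cong G\times S$, so the quotient is identified with $S$ and properness never needs a separate check.

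What the citation additionally supplies, and your argument does not attempt, is the Fulton--MacPherson-type compactification with corners. The paper uses that compactification later for the $2$-colored tree stratification. For the statement as written, your slice argument is complete.

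Two small remarks.
\begin{enumerate}
\item Your restriction to $d\geq 2$ is necessary and is missing from the statement. For $d=1$ the number $2d-3=-1$ is only a virtual dimension: there is an $\mathbb R_{>0}$-stabilizer, which is why the paper uses sprinkles to break that symmetry in the cylinder case.
\item Your identification of the residual automorphism group with $\operatorname{Aff}(\mathbb C,\mathbb R_{>0})$ uses that framings are \emph{oriented} rays. This matches the paper's phrase ``positive real direction'', but not its literal $\RP(T_{z_i}C)$. With unoriented lines the residual group would be $\mathbb R^*\ltimes\mathbb C$, since $z\mapsto -z$ preserves a line at $\infty$. The moduli space would then be the quotient of your slice $S$ by the free involution $(0,z_2,\dots,z_d)\mapsto(0,-z_2,\dots,-z_d)$. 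That quotient is still a smooth manifold of dimension $2d-3$, so nothing in your conclusion changes, but the identification step should state which convention is being used.
\end{enumerate}
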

Let $F$ be an arbitrary finite set and $p:F\rightarrow\{1, \dots, d\}$ be a map
\begin{defn}[$p$-Flavour/ Sprinkles]
	A $p$-flavour on $(C; z_0, \dots, z_d; \theta_0)\in\R^{al}_d$ is a map $\Psi:F\rightarrow\operatorname{Isom}(C, \mathbb{P}^1)$ such that for every $f\in F$ we have
	\begin{enumerate}
		\item $\Psi(f)(z_0)=\infty$ and $\Psi(f)(z_{p(f)})=0$.
		\item $\Psi(f)_*(\theta_0)$ points along the positive real-direction. 
	\end{enumerate}
	We call such $\Psi$ a $\textit{sprinkle}$ on $(C; z_0, \dots, z_d; \theta_0)$ and $\Psi(f)$ a sprinkle at $p(f)$. 
\end{defn}
Denote by $\R^{al}_{d, p}$ the space of all smooth genus-zero $p$-flavoured framed curves. 
\begin{prop}[\cite{borman2024l_}]
	The forgetful map $\R^{al}_{d, p}\rightarrow \R^{al}_d$ has the structure of an $\mathbb{R}^{|F|}_{>0}$-bundle. In particular, $\R^{al}_{d, p}$ is a smooth manifold of $\dim_\mathbb R \R^{al}_{d, p}= 2d-3+|F|$. 
\end{prop}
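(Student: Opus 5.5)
The plan is to identify the fibres of the forgetful map directly and then produce local trivializations. Fix a point $(C; z_0,\dots,z_d;\theta_0)\in\R^{al}_d$. For each $f\in F$, the allowed values of $\Psi(f)$ are biholomorphisms $C\to\mathbb P^1$ satisfying $\Psi(f)(z_0)=\infty$, $\Psi(f)(z_{p(f)})=0$, and $\Psi(f)_*(\theta_0)$ pointing along the positive real direction.

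\textbf{Step 1: each factor is an $\mathbb R_{>0}$-torsor.} Fix one such biholomorphism $\psi_f$. Any other one differs from it by a Möbius transformation of $\mathbb P^1$ fixing $\infty$ and $0$, that is, a map $w\mapsto aw$ with $a\in\mathbb C^*$. The framing condition at $\infty$ pins down the argument of $a$, since multiplication by $a$ rotates tangent directions at $\infty$ by the inverse argument. So the allowed transformations are exactly those with $a\in\mathbb R_{>0}$. Existence of $\psi_f$ is clear: first choose a biholomorphism sending $z_0\mapsto\infty$ and $z_{p(f)}\mapsto 0$, then compose with a rotation to correct the direction of $\theta_0$. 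Hence the set of admissible $\Psi(f)$ is a principal homogeneous space for $\mathbb R_{>0}$.

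\textbf{Step 2: the full fibre.} The fibre of $\R^{al}_{d,p}\to\R^{al}_d$ is the product of these sets over $f\in F$, so it is an $\mathbb R_{>0}^{F}$-torsor.

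\textbf{Step 3: local trivializations.} We use the model $\R^{al}_d=\operatorname{Conf}_d(\mathbb C)/\operatorname{Aff}(\mathbb C,\mathbb R_{>0})$. Here $C=\mathbb P^1$, $z_0=\infty$, and $\theta_0$ is the positive real direction. Choose a smooth local section $(z_1,\dots,z_d)$ of the quotient map, for instance by normalizing $z_1=0$ and $|z_2-z_1|=1$; the affine action is free and proper because $d\geq 2$ in the stable range. The translation $w\mapsto w-z_{p(f)}$ then gives a smooth choice of $\psi_f$. The bijection $(a_f)_f\mapsto (a_f\psi_f)_f$ is a local trivialization $U\times\mathbb R_{>0}^{F}\cong$ (preimage of $U$).

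On overlaps of two such sections, the configurations differ by an element $z\mapsto bz+c$ of $\operatorname{Aff}(\mathbb C,\mathbb R_{>0})$ depending smoothly on the base point. The transition functions are therefore multiplication by $b$ in each coordinate, which is smooth and positive. This gives the bundle structure, and
\[
\dim_{\mathbb R}\R^{al}_{d,p}=\dim_{\mathbb R}\R^{al}_d+|F|=2d-3+|F|.
\]

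\textbf{Main obstacle.} The only delicate point is the orientation bookkeeping in Step 1. One has to check that the framing condition at $\infty$ cuts $\mathbb C^*$ down to $\mathbb R_{>0}$ rather than to $\pm\mathbb R_{>0}$. Since $\theta_0$ lies in $\RP(T_{z_0}C)$, it is only a line, so $a\in\mathbb R^*$ is what the condition gives directly. To get $\mathbb R_{>0}$ one must read "positive real direction" as an oriented ray, i.e. treat $\theta_0$ as a ray rather than an unoriented line. This is presumably the intended convention, following \cite{borman2024l_}.
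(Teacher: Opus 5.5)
Your proposal is correct. The paper gives no argument of its own for this statement; it only cites \cite{borman2024l_}. Your two ingredients are the standard justification behind that citation: first, the fibrewise observation that the Möbius maps fixing $0$, $\infty$ and the framing ray at $\infty$ are exactly $w\mapsto aw$ with $a>0$; second, the explicit choice $\psi_f(w)=w-z_{p(f)}$ in the model $C=\mathbb P^1$, $z_0=\infty$.

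Three small remarks.

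First, your normalization $z_1=0$, $|z_2|=1$ meets every $\operatorname{Aff}(\mathbb C,\mathbb R_{>0})$-orbit exactly once, so it is a \emph{global} slice. Combined with the global choice of $\psi_f$, this gives a global trivialization $\R^{al}_{d,p}\cong\R^{al}_d\times\mathbb R^{F}_{>0}$. The overlap check is therefore unnecessary, although your computation there is right: for $\phi(w)=bw+c$ one gets $\psi'_f\circ\phi=b\,\psi_f$.

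Second, the ray reading of the framing is not merely the presumably intended convention; it is forced by the paper's own model $\R^{al}_d=\operatorname{Conf}_d(\mathbb C)/\operatorname{Aff}(\mathbb C,\mathbb R_{>0})$. If framings were unoriented lines, the stabilizer of the framing at $\infty$ would be $\operatorname{Aff}(\mathbb C,\mathbb R^*)$ and the fibre would be $(\mathbb R^*)^{F}$. So $\RP(T_{z_i}C)$ in the paper's definition should be read as the circle of tangent rays.

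Third, as you implicitly assume, the bundle statement only makes sense for $d\geq 2$. For $d=1$ the base is a single point with stabilizer $\mathbb R_{>0}$. The count $2d-3+|F|=|F|-1$ then still holds, but it comes from the quotient $\mathbb R^{F}_{>0}/\mathbb R_{>0}$ rather than from a bundle over a manifold.
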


In what follows, we need to keep track of the slope of the Hamiltonian corresponding to each marked point. To do so and following \cite{borman2024l_}, we introduce the following definition.

\begin{defn}[Weights]
	Consider an element $C\in \R^{al}_{d, p}$. A weight on $C$ is a tuple $n=(n_0, n_1, \dots, n_d)\in\mathbb{N}^{d+1}$, where each $n_i$ corresponds to the $i^{th}$-marked point such that
	\begin{equation*}
		n_0=n_1+\dots+n_d+|F|.
	\end{equation*}
	We denote by $\R^{al}_{d, p, n}$ the moduli space of smooth genus-zero curves with $p$-flavoring, aligned framings and weight $n$. 
\end{defn}

\begin{prop}
	$\R^{al}_{d, p, n}$ is a smooth manifold of $\dim_\mathbb R \R^{al}_{d, p, n}= 2d-3+|F|$. 
\end{prop}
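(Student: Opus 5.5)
The weight is purely discrete data, so I plan to reduce the statement to the preceding proposition on $\R^{al}_{d,p}$. For a fixed tuple $n=(n_0,\dots,n_d)\in\mathbb N^{d+1}$ with $n_0=n_1+\dots+n_d+|F|$, decorating a curve by $n$ puts no condition on the underlying flavoured framed curve. The forgetful map
\[
\R^{al}_{d,p,n}\longrightarrow \R^{al}_{d,p}
\]
is therefore a bijection. I will topologise $\R^{al}_{d,p,n}$, and give it its smooth structure, by declaring this map a diffeomorphism. If one prefers to let $n$ vary, the space becomes
\[
\bigsqcup_{n}\R^{al}_{d,p}\times\{n\},
\]
with the set of admissible weights discrete. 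Here $n_0$ is determined by $n_1,\dots,n_d$ and $|F|$, so this set is just $\mathbb N^d$. A disjoint union of copies of a fixed smooth manifold, indexed by a discrete set, is again a smooth manifold of the same dimension.

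It then remains to recall why $\dim_\mathbb R\R^{al}_{d,p}=2d-3+|F|$.

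First, $\R^{al}_d=\operatorname{Conf}_d(\mathbb C)/\operatorname{Aff}(\mathbb C,\mathbb R_{>0})$. The configuration space $\operatorname{Conf}_d(\mathbb C)$ is open in $\mathbb C^d$, so it has real dimension $2d$. The group $\operatorname{Aff}(\mathbb C,\mathbb R_{>0})$ has real dimension $3$ (translations $b\in\mathbb C$ and positive scalings $a$). For $d\geq 2$ the action is free and proper: the two points $z_1\neq z_2$ can be normalised to $0$ and $1$ only by the identity. The quotient is therefore a manifold of dimension $2d-3$, as stated in the proposition of \cite{merkulov2011operads}.

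Second, the proposition of \cite{borman2024l_} says that $\R^{al}_{d,p}\to\R^{al}_d$ is an $\mathbb R_{>0}^{|F|}$-bundle. The reason is that each sprinkle $\Psi(f)$ is pinned down by the conditions $z_0\mapsto\infty$, $z_{p(f)}\mapsto 0$ and the positive-direction condition on $\theta_0$, up to exactly one positive real rescaling. Adding the fibre dimension gives $2d-3+|F|$.

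The only point needing care, which I expect to be the main (mild) obstacle, is to check that the weight condition is compatible with everything else: $n$ must not constrain the flavouring or framing. This holds because $n$ enters only as a label recording Hamiltonian slopes, and the balancing identity $n_0=\sum_{i\geq1}n_i+|F|$ involves only the combinatorial quantity $|F|$, which is constant on $\R^{al}_{d,p}$.

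Combining these steps gives that $\R^{al}_{d,p,n}$ is a smooth manifold of $\dim_\mathbb R\R^{al}_{d,p,n}=2d-3+|F|$.
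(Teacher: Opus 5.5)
Your proposal is correct and follows the paper's proof: the paper also reduces, via the weight-forgetting map $\R^{al}_{d,p,n}\to\R^{al}_{d,p}$, to the preceding proposition giving $\dim_\mathbb R\R^{al}_{d,p}=2d-3+|F|$. You observe that for a fixed tuple $n$ this map is a bijection, i.e.\ a one-sheeted covering; this is consistent with the paper's description of it as a finite-to-one covering map, and slightly sharper.
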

\begin{proof}
	The forgetful map $\R^{al}_{d, p, n}\rightarrow\R^{al}_{d, p}$ given by forgetting the weight is a finite to one covering map. 
\end{proof}

\begin{defn}[Forgettable Marked Point]\label{forgetfulPoint}
	Let $C\in\R^{al}_{d, p, n}$ and suppose that $z_i$ is a marked point of corresponding weight $n_i$. $z_i$ is said to be a forgettable marked point, if $n_i=0$ and for every $f\in F$, $p(f)\neq i$.\\
     Denote by $Z(n):=\{0\leq i\leq k: z_i\text{ is forgettable}\}$ and by $Z^c(n)$ its complement in $\{0, \dots, d\}$.
\end{defn}
	\begin{rmk}
	In another words, forgettable marked points are those marked points that are not asymptotic to a Hamiltonian orbit and hence we can apply the $\textit{removable of singularity}$ theorem. 
\end{rmk}
Let $(d, p, n)$ be a triple corresponding to the number of marked points, flavour, and weights respectively and consider $\R^{al}_{d, n, p}$. Suppose that, after forgetting all forgetful marked points, $(d,n,p)$ reduces to $(d',n',p')$. We have a forgetful map induced on the Fulton-Macpherson compactification as described above,
    \begin{equation*}
    \overline{\R}^{al}_{d,n,p}\longrightarrow\overline{\R}^{al}_{d',n',p'}
    \end{equation*}
    given by forgetting all the forgettable marked points followed by stabilization. Note that, such map is not well-defined in the case when after forgetting all forgettable marked points, the resulting curve is a (Floer) cylinder. Namely, the above map is not well-defined in the case $d'=1, F=\emptyset, n'\geq1$. \\

\begin{defn}
    Let $(d,p,n)$ be as above. We define $\operatorname{Sym}(d, p):=\{(\sigma_1, \sigma_2)\in\operatorname{Sym}(d)\times\operatorname{Sym}(|F|): p(\sigma_2(f))=\sigma_1(p(f))\}$ the subgroup of the group of symmetry of $(d+|F|)$-elements. $\operatorname{Sym}(d, p)$ acts on $\overline{\R}^{al}_{d,n,p}$ by $(\sigma_1, \sigma_2).(\Sigma, z_0,z_1, \dots, z_d; \psi)=(\Sigma,z_0, z_{\sigma_1(1)}, \dots, z_{\sigma_1(d)}; \psi\circ\sigma_2 )$. 
\end{defn}
	\subsubsection{Asymptotic Ends}
		We denote the:
		\begin{enumerate}
			\item $\textit{half-cylinder}$ by  $Z_{\pm}:=\{(s, t)\in\mathbb{R}\times S^1:\pm s\geq 0\}$. 
			\item $\textit{half-strip}$ by $Z_{\pm}^{st}:=\{(s, t)\in \mathbb{R}\times[0, 1]: \pm s\geq 0\}$. 
		\end{enumerate}
	Let $C$ be a smooth genus-zero Riemann surface, possibly with boundary, and $z\in C$. 
	\begin{defn}[Cylinderical End]
		Suppose that $z\in C$ is an interior point. A cylinderical end at $z\in C$ is a choice of a holomorphic embedding $\epsilon_\pm: Z_\pm\hookrightarrow C\setminus\{z\}$ such that $\lim_{s\rightarrow\pm\infty}\epsilon_\pm(s, t)=z$ and
		\begin{enumerate}
			\item if $C$ has no boundary and we identify $C\cong\mathbb{P}^1$ sending $z\mapsto 0, \infty$ in the case of $\epsilon_+, \epsilon_-$ respectively then, $\epsilon_\pm$ factors through $(s, t)\mapsto e^{-2\pi(s+it)}$;
			\item if $C$ has boundary, and we identify $C\cong\mathbb{D}$ sending $z\mapsto 0$ then, $\epsilon_\pm$ factors through $(s,t)\mapsto e^{-2\pi(s+it)}$.
		\end{enumerate}
	\end{defn}
	
	\begin{defn}[Strip-like End]
		Suppose that $C$ has boundary and $z\in\partial C$ is a boundary point. A strip-like end at $z$ is a choice of holomorphic embedding $\epsilon_\pm:Z^{st}_\pm\hookrightarrow C\setminus\{z\}$ such that $\lim_{s\rightarrow\pm\infty}\epsilon_\pm(s,t)=z$ and if we identify $C\cong\mathbb{D}$ sending $z\mapsto\pm 1$ corresponding to $\epsilon_+, \epsilon_-$ respectively then, $\epsilon_\pm$ factors through $(s, t)\mapsto \frac{\e^{\pi(s+it)}-i}{\e^{\pi(s+it)}+i}$. 
	\end{defn}
    \begin{defn}[Universal Choice of Ends]
    Suppose that $\overline{\R}$ is any of the above the compactified domain moduli spaces. 
        \begin{enumerate}
            \item A universal choice of strip-like end is a fibre-wise smooth map $\epsilon:\overline{\R}\times Z^{st}_\pm\hookrightarrow\overline{\R}^{univ}$ such that, for any $r\in\overline{R}$, $\epsilon(r, .):Z^{st}_\pm\hookrightarrow\overline{\R}^{univ}_r$ is a holomorphic embedding defining a strip-like end.
            \item Similarly a universal choice of cylindrical ends is a fibre-wise smooth map $\epsilon:\overline{\R}\times Z_\pm\hookrightarrow\overline{\R}^{univ}$ such that, for any $r\in\overline{\R}$, $\epsilon(r, .):Z_\pm\hookrightarrow\overline{\R}^{univ}_r$ is a holomorphic embedding defining a cylinderical end. 
        \end{enumerate}
    \end{defn}
    Given two marked Riemann surfaces $(\C_\pm, z_\pm)$ with a choice of ends $\epsilon_\pm$ of the $\textit{same type}$, that is, either both are cylindrical or strip-like, we can $\textit{glue}$ them together using the choice of ends. Indeed, if $\epsilon_\pm:Z^{st}_\pm\hookrightarrow C_\pm$ we denote the glued surface by $C_-\#_\rho C_+:=C_-\sqcup C_+/\epsilon_+(s+l, t)\sim\epsilon_-(s,t)$ where $l:=-\log(\rho)$ and $0<\rho< 1$ is the $\textit{gluing parameter}$. The case of gluing cylinderical ends in the presence of sprinkles is more delicate and we follow the construction in \cite{abouzaid2010open}. Continuing such construction inductively as in Section 9g of \cite{seidel2008fukaya} or section 2.4 of \cite{abouzaid2010open}, at the level of strata of the compactifications of $\overline{\R}$  as described above, we get an $\overline{\R}$-family of choices of ends. We say such choice is $\textbf{consistent}$ if it defines a universal family, as in the above definition.

\section{Pseudo-holomorphic Curve Theory}

In this section we define the $L_\infty$ and $A_\infty$ structures used in this paper, that one obtains from various moduli spaces of pseudo-holomorphic curves. We start with setting the sign and grading conventions that we will use in this paper. Our convention is as in \cite{abouzaid2010open}, \cite{sheridan2015homological}, \cite{sheridan2020versality}, \cite{seidel2008fukaya}, \cite{borman2024l_}.

\subsection{Grading}
 
 \begin{defn}[Grading Datum]
            A grading datum is an abstract group $G$ together with a pair of group morphisms $\mathbb G:=\{\mathbb Z\rightarrow G\rightarrow\mathbb Z/2\}$, such that their composition $\mathbb Z\rightarrow \mathbb Z/2$ is reduction mod-$2$. 
        \end{defn}
        \begin{defn}[$\mathbb G$-grading]
            Let $\mathbb G$ be a grading datum and $M$ be a module. $M$ is said to be $\mathbb G$-graded if $M$ is $G$-graded. That is, $M=\bigoplus_{g\in G}M_g$ such that for all $g\in G, M_g\leq M$ is a sub-module and for all $m\in M\setminus\{0\}$ there exists a unique $g\in G$ such that $m\in M_g$.\\
            In this case, we say $m\in M$ has degree $d\in\mathbb Z$ if $m\in M_g$ such that $d\mapsto g$ under the morphism $\mathbb Z\rightarrow G$. 
        \end{defn}
        We give the category of $\mathbb G$-graded modules a symmetric monoidal structure given by $M_1\otimes M_2\cong M_2\otimes M_1$ under the identification $m_1\otimes m_2\mapsto(-1)^{\deg(m_1)\deg(m_2)}m_2\otimes m_1$.
        \begin{rmk}
            Observe that a $\mathbb G$-graded module has a canonical $\mathbb Z/2$-grading given by $G\rightarrow\mathbb Z/2$. 
        \end{rmk}
        \begin{defn}[$\mathbb G$-graded Line]
            A $\mathbb G$-graded line is a $1$-dimensional vectorspace over $\mathbb R$ together with a $\mathbb G$-grading. In particular, the $\mathbb G$-grading of a line is concentrated in a single degree.\\
            Given two $\mathbb G$-graded lines $l_1$ and $l_2$, we say they are isomorphic and write $l_1\cong l_2$ if there exists an $\textbf{isomorphism of $\mathbb G$-graded lines}$ between them.\\
            An isomorphism of $\mathbb G$-graded lines is an equivalence class of $\mathbb G$-graded $\mathbb R$-linear isomorphisms, under scaling by positive real number. 
        \end{defn}
        \begin{defn}[$\mathbb G$-graded $\mathbb Z/2$-torsor]
            Let $\{a,b\}$ be a $\mathbb Z/2$-torsor and $\mathbb G$ be a grading datum. A $\mathbb G$-grading on $\{a,b\}$ is a pair $(\{a,b\}, g)$ such that $g\in G$.\\
            In this case, we say the $\mathbb G$-grading on the $\mathbb Z/2$-torsor is concentrated in degree $g\in G$. 
        \end{defn}
        \begin{example}
            The set of all isomorphic $\mathbb G$-graded lines is a groupoid under tensor product. Such groupoid is equivalent to the groupoid of $\mathbb G$-graded $\mathbb Z/2$-torsors. 
        \end{example}
        \begin{defn}[$\mathbb G$-normalization]
            Suppose that $(\{a,b\}, g)$ is a $\mathbb G$-graded $\mathbb Z/2$-torsor. We define its $\mathbb G$-normalization to be the $\mathbb G$-graded free group of rank $1$ given by
            \begin{equation*}
                |(\{a,b\}, g)|_\mathbb G:=\mathbb Z\langle a,b\rangle/(a+b)[-g],
            \end{equation*}
            where the notation above means that the $\mathbb{G}$-grading on $|(\{a,b\}, g)|_\mathbb G$ is concentrated in degree $g\in G$. 
        \end{defn}
        Now let $(X,\omega)$ be a symplectic manifold and for $x\in X$ denote by $\operatorname{Lag}_x(X)$ the smooth manifold of all linear Lagrangians subspaces of $(T_xX, \omega_x)$. For instance, if $2n$ is the real dimension of $X$ then $\operatorname{Lag}_{x}(X)\cong U(n)/O(n)$. Let $\operatorname{Lag}(X)\rightarrow X$ be the fibre bundle of fibre $\operatorname{Lag}_x(X)$ over $x\in X$. Consider the long exact sequence on homotopy groups associated to the fibre bundle $\operatorname{Lag}(X)\rightarrow X$. Namely,
        \begin{equation*}
            \dots\rightarrow\pi_2(X)\rightarrow\pi_1(\operatorname{Lag}_x(X))\rightarrow\pi_1(\operatorname{Lag}(X))\rightarrow\pi_1(X)\rightarrow0.
        \end{equation*}
		As the $\textit{abelianization}$ functor is exact and noting that $\pi_1(U(n)/O(n))\cong\mathbb Z$, it follows that we have an exact sequence
        \begin{equation}
            \mathbb Z\rightarrow H_1(\operatorname{Lag}(X))\rightarrow H_1(X)\rightarrow 0.
        \end{equation}
	Now let $\gamma:=\{(V,v):V\in\operatorname{Lag}_x(X)\text{ for some } x\in X, v\in V\}\rightarrow\operatorname{Lag}(X)$ be the tautological vectorbundle over $\operatorname{Lag}(X)$. By exactness of the above sequence, it follows that $H_1(\operatorname{Lag}(X))\rightarrow H_1(X)$ is surjective. Thus, after pairing elements of $H_1(X)$ with $w_1(\gamma)$, the first Stiefel-Whitney class of $\gamma$, we get a surjective group morphism $H_1(\operatorname{Lag}(X))\rightarrow\mathbb Z/2$. In this setting, we have a canonical choice of $\mathbb G$-grading datum. Namely, $G:=H_1(\operatorname{Lag}(X))$ and $\mathbb G=\{\mathbb Z\rightarrow H_1(\operatorname{Lag}(X))\rightarrow\mathbb Z/2\}$, where the left arrow is the one in the above exact sequence.\\

    Denote by $\widetilde{\operatorname{Lag}}(X)\rightarrow\operatorname{Lag}(X)$ the covering space of deck of transformations $H_1(\operatorname{Lag}(X))$.
    \begin{defn}[Lagrangian Brane] \label{LagBraneDefn}
        A Lagrangian brane in $X$ is a Lagrangian $L\subset X$ together with a choice of:
        \begin{enumerate}
            \item Pin structure on $L$ denoted by $P_L$,
            \item a lift $\tilde{L}$ of the map $x\in L\mapsto (x, T_xL)\in\operatorname{Lag}(X)$ to the covering space $\widetilde{\operatorname{Lag}}(X)\rightarrow\operatorname{Lag}(X)$. 
        \end{enumerate}
    \end{defn}
    \subsubsection{Gradings for Hamiltonian Chords}
Let $H:S^1\times X\rightarrow \mathbb{R}$ be a time-dependent Hamiltonian, possibly autonomous. Denote by $X_H$ its associated Hamiltonian vectorfield. That is,
	\begin{equation*}
		-dH=\iota_{X_H}\omega,
	\end{equation*}
 where $d$ is the exterior derivative on $X$. Let $L_0, L_1$ be two Lagrangians in $X$ each equipped with a Brane structure as in Definition \ref{LagBraneDefn}.
 \begin{defn}[Hamiltonian Chord]\label{HamChordDefn}
     A Hamiltonian chord of $X_H$ between $L_0$ and $L_1$ is a smooth curve $y:[0,1]_t\rightarrow X$ such that
     \begin{equation*}
         \dot{y}(t)=X_{H}(y(t)),\text{ } y(0)\in L_0 \text{ and }y(1)\in L_1.
     \end{equation*}
     We denote the set of Hamiltonian chords of $H$ from $L_0$ to $L_1$ by $\mathcal X(L_0,L_1; H)$. 
 \end{defn}
For $i=0,1$ we denote by $L^\#_i:L_i\rightarrow\widetilde{\operatorname{Lag}}(X)$ the associated lift and let $y^\#:[0,1]\rightarrow\widetilde{\operatorname{Lag}}(X)$ be a lift of $y$ such that $y^\#(i)\in L^\#_i$. Denote by $D_{y^\#}$ the associated real Cauchy-Riemann operator on the upper half-plane and let $k\equiv \operatorname{ind}(D_{y^\#})\in\mathbb Z$ be its index. Let $g\in\ker(H_1(\operatorname{Lag}(X))\rightarrow\mathbb Z/2)$ be the unique deck of transformation the two different lifts of $y$ to each other. 

\begin{defn}[Degree of $y$]
    The degree of the Hamiltonian chord $y$ is given by $|y|\equiv\deg(y):=(k-g)\in H_1(\operatorname{Lag}(X))$ with the understanding that $k$ denotes the image of $\operatorname{ind}(D_{y^\#})\in\mathbb Z\mapsto H_1(\operatorname{Lag}(X))$ in the above short exact sequence. 
\end{defn}

Now consider the $\textit{determinant line bundle}$ $\det(D_{y^\#})$ on the upper half-plane where we give it a $\mathbb G$-grading concentrated at $|y|$. Using the Pin-structure on each of $L_0, L_1$ we define a $\mathbb G$-graded $2$-torsor $Pin_{y^\#}$ given by the set of isomorphism classes of Pin-structures on $\gamma\rightarrow\operatorname{Lag}(X)$ together with an identification of such Pin-structures on $L_0^\#$ and $L_1^\#$ at $y^\#(0)$ and $y^\#(1)$ respectively. We set $o_{y^\#}:=\det(D_{y^\#})\otimes Pin_{y^\#}$. We denote by $y^\#_i$ the two different lifts of $y$ to $\widetilde{\operatorname{Lag}}(X)$ for $i=1,2$.\\ 
Following Proposition $8.1.4$ of \cite{fukaya2010lagrangian} and Proposition $1.4.10$ of \cite{abouzaid2010open}, it follows that, we have a $\textit{canonical}$ identification of $o_{y_1^\#}\cong o_{y_2^\#}$ as $\mathbb G$-graded lines. Hence, $o_{y^\#}$ is independent of the choice of lift of $y$ up to an isomorphism of $\mathbb G$-graded lines.
\begin{defn}[Orientation Line]
    We define the orientation line at the Hamiltonian chord $y$ by $o_y\equiv o_{y^\#}$.\\
    We also denote by $|o_y|_\mathbb G$ its associated $\mathbb G$-normalization. 
\end{defn}

\subsubsection{Grading of Hamiltonian Orbits}
 \begin{defn}[Hamiltonian Orbit]\label{HamOrbDefn}
     A Hamiltonian orbit of $X_H$ is a smooth map $x:S^1_t\rightarrow X$ such that
     \begin{equation*}
         \dot{x}(t)=X_{H}(x(t)). 
     \end{equation*}
     We denote the set of Hamiltonian orbits of $H$ by $\mathcal X(H)$. 
 \end{defn}
 Following \cite{audin2014morse} we consider a smooth path of symplectic matrices $\Phi:[0,1]\rightarrow\operatorname{Sp}(2n)$ such that:
 \begin{enumerate}
     \item $\Phi(0)\equiv\id$,
     \item $\ker(\Phi(1)-\id)=\{0\}$. 
 \end{enumerate}
 We denote by $\mathfrak{sp}(2n):= T_{\id}\operatorname{Sp}(2n)$ its Lie algebra and define $A:[0,1]\rightarrow\mathfrak{sp}(2n)$ given by
 \begin{equation*}
     A(t):=(\frac{d}{dt}\Phi(t)).\Phi(t)^{-1}.
 \end{equation*}
 Possibly after a homotopy, we also assume that $A(0)=A(1)$. \\
 Let $B:Z_-\rightarrow\mathfrak{sp}(2n)$ be a smooth map from the negative half-cylinder such that:
 \begin{enumerate}
     \item $B(s,t)\equiv A(t)$ for $s>>1$,
     \item $B\equiv 0$ in a neighborhood of $s=0$.  
 \end{enumerate}
 \begin{defn}
     For a smooth map $f:S^1\rightarrow\operatorname{Lag}(\mathbb R^{2n})$ and $p>2$, we define $W^{1,p}(Z_-; (\mathbb R^{2n}, f))$ to be the Sobolev space consisting $\zeta\in W^{1,p}(Z_-; \mathbb R^{2n})$ such that $\zeta(0, t)=f(t)$. 
 \end{defn}
 Under the setting of the above Definition and given $\Phi$ as above, we define a real Cauchy-Riemann operator $D_{\Phi, f}:W^{1,p}(Z_-; (\mathbb R^{2n}, f))\rightarrow L^p(\hom ^{0,1}(TZ_-, \mathbb R^{2n }))$ given by
 \begin{equation*}
     D_{\Phi, f}(\zeta):=(\nabla\zeta-(B.\zeta)\otimes dt)^{0,1},
 \end{equation*}
using the standard complex structure on $\mathbb R^{2n}\cong \mathbb C^n$. Using Appendix C of \cite{mcduff2012j}, we have $D_{\Phi, f}$ is a Fredholm operator.

\begin{defn}[Non-degenerate Hamiltonian]
    A Hamiltonian $H:S^1\times X\rightarrow \mathbb R$ is said to be non-degenerate if $\operatorname{graph}(\phi^{X_H}_1)$ the graph of the time-$1$ flow of $X_H$ intersects the diagonal in $X\times X$ transversely.
\end{defn}
Suppose that $H$ is a non-degenerate Hamiltonian and $x:S^1\rightarrow X$ is a Hamiltonian orbit of $H$. Let $\tilde{x}:S^1\rightarrow\operatorname{Lag}(X)$ be a lift of $x$ and fix a unitary trivialization $\Psi:x^*TX\cong\underline{\mathbb C}^n$. As $H$ is non-degenerate, we set $\Phi(t):=\Psi(t)\nabla\phi_{X_H}^t(\Psi(0)^{-1})$ and $f(t):=\Psi(t)\tilde{x}(t)$ and consider the associated Fredholm operator $D_{\Phi, f}:W^{1,p}(Z_-; (\mathbb R^{2n}, f))\rightarrow L^p(\hom ^{0,1}(TZ_-, \mathbb R^{2n }))$. Let $[\tilde{x}]\in H_1(\operatorname{Lag}(X))$ be the unique homology class such that $\operatorname{ind}(D_{\Phi, f})\equiv 0$. 

\begin{defn}[Degree of a Hamiltonian Orbit]
    We define the degree of $x$ by $|x|\equiv\deg(x):=n-[\tilde{x}]\in H_1(\operatorname{Lag}(X))$ with the understanding that $n\in\mathbb Z\mapsto H_1(\operatorname{Lag}(X))$ using the above short exact sequence.\\
    We define the orientation line at $x$ by $o_x:=\det(D_{\Phi, f})$, which is a $\mathbb G$-graded line concentrated at $\deg(x)$.\\
    We denote by $|o_x|_\mathbb G$ its $\mathbb G$-normalization. 
\end{defn}

\subsection{$L_\infty$-Structure}
\subsubsection{Algebraic Preliminaries}
\begin{defn}[Filtered Graded Algebra]
			A filtered graded algebra is a tuple $(\Lambda, \mathcal{F})$ where $\Lambda$ is a $\mathbb{Q}$-graded $\mathbb{C}$-algebra
			\begin{equation*}
				\Lambda=\oplus_{i\in\mathbb{Q}}\Lambda_i\text{ such that }\Lambda_i.\Lambda_j\subseteq \Lambda_{i+j}.
			\end{equation*}
			$\mathcal{F}$ is a (decreasing) filtration on $\Lambda$. That is, for every $n\in\mathbb{Z}$, $\mathcal{F}_{\geq n}\Lambda\subseteq \Lambda$ is a $\mathbb{C}$-subspace satisfying
			\begin{equation*}
				\mathcal{F}_{\geq n_1}\Lambda.	\mathcal{F}_{\geq n_2}\Lambda\subseteq \mathcal{F}_{\geq n_1+n_2}\Lambda.
			\end{equation*}
			We will always assume that the $\mathcal{F}$-filtration is exhaustive and bounded from below. 
		\end{defn}
	Let $\mathfrak g$ be a $\mathbb{C}$-vectorspace and consider the base change $\mathfrak g\otimes_\mathbb{C} \Lambda$. We have an induced filtration on $\mathfrak g\otimes_\mathbb{C} \Lambda$, still denoted by $\mathcal{F}$, given by
	\begin{equation*}
		\mathcal{F}_{\geq n}(\mathfrak g\otimes_\mathbb{C} \Lambda):= \mathfrak g\otimes_\mathbb{C} \mathcal{F}_{\geq n}\Lambda.
	\end{equation*}
	We denote by $\overline{\mathfrak g\otimes_\mathbb{C} \Lambda}$ the completion with respect to the induced filtration.
	\begin{defn}[Degree-wise Tensor Product]
		Suppose that $\mathfrak g$ is a $\mathbb{Q}$-graded $\mathbb C$-vectorspace. We define the degree-wise tensor product by
		\begin{equation*}
			\mathfrak{g}\hat{\otimes}\Lambda:=\oplus_{i\in\mathbb{Q}}(\overline{\mathfrak g\otimes_\mathbb{C} \Lambda})_i.
		\end{equation*}
	\end{defn}
	\begin{rmk}\label{ChoiceofRing}
		We will always work in the case when $\mathfrak g$ is $\mathbb{Z}$-graded and concentrated in degrees above some integer. Moreover, $\Lambda$ is a Novikov type ring of the form $\mathbb C[[q_1, \dots, q_n]]$ for some formal variables of fractional degrees $0\leq\deg(q_j)\leq 2$. Moreover, $\mathcal{F}$ is the Novikov filtration induced by $q_j$. With this in mind, we will abuse notation and write $\mathfrak g^\Lambda$ for $\mathfrak g\hat{\otimes}\Lambda$. 
	\end{rmk}
	Now suppose that $(\mathfrak g, \ell_*)$ be an $L_\infty$-algebra over $\mathbb{C}$. We extend the $L_\infty$-maps $\Lambda$-linearily for which we still denote them by $\ell_*$.
	\begin{defn}[Maurer-Cartan Element]
		We denote by $\operatorname{MC}(\mathfrak g^\Lambda)$ the set of all Maurer-Cartan elements of $(\mathfrak g^\Lambda, \ell_*)$. Namely,
		\begin{equation*}
			\operatorname{MC}(\mathfrak g^\Lambda):=\{\alpha\in\mathcal{F}_{\geq 1}\mathfrak g^\Lambda: \deg(\alpha)=2, \mathcal{C}(\alpha)=0\}.
		\end{equation*}
		Where $\mathcal{C}(\alpha):=\sum_{d\geq 1}\frac{1}{d!}\ell_d(\alpha, \dots, \alpha)$ is the $\textit{curvature}$ term. 
	\end{defn}
	\begin{rmk}
		The curvature equation $\mathcal{C}$ is well-defined and convergent with respect to the topology induced by the filtration $\mathcal{F}$ on $\mathfrak g^\Lambda$. In particular, $\mathcal{C}(\alpha)\in \mathfrak g^\Lambda$. 
	\end{rmk}
	
	\begin{defn}[Deformed $L_\infty$-operations]
		Given $\alpha\in\operatorname{MC}(\mathfrak g^\Lambda)$, we define the $\alpha$-deformed $L_\infty$-operations on $\g^\Lambda$ by
		\begin{equation*}
			\ell^\alpha_k(x_1, \dots, x_k):=\sum_{d\geq 0}\frac{1}{d!}l_{d+k}(\alpha^{\otimes d}; x_1,\dots, x_k).
		\end{equation*}
	\end{defn}
	\begin{prop}
		For any $\alpha\in\operatorname{MC}(\mathfrak g^\Lambda)$, $(\mathfrak g^\Lambda, \ell^\alpha_*)$ is an $L_\infty$-algebra. 
	\end{prop}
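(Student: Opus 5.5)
We need to prove that for $\alpha\in\operatorname{MC}(\mathfrak g^\Lambda)$ the operations $\ell^\alpha_k$ are well defined and satisfy the $L_\infty$-relations. The approach is a formal rearrangement of the $L_\infty$-relations of $(\mathfrak g^\Lambda,\ell_*)$, with all inputs but $k$ of them equal to $\alpha$, using that the Maurer–Cartan equation kills the remaining curvature term.

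First, convergence. Since $\alpha\in\mathcal F_{\geq 1}\mathfrak g^\Lambda$ and each $\ell_{d+k}$ is $\Lambda$-multilinear, we have
\[
\ell_{d+k}(\alpha^{\otimes d};x_1,\dots,x_k)\in\mathcal F_{\geq d}\mathfrak g^\Lambda
\]
whenever the $x_i$ lie in $\mathcal F_{\geq 0}$. For general $x_i$ the filtration is shifted by a uniform bound, because the filtration is bounded from below. Hence the series defining $\ell^\alpha_k$ converges in the completion. The output has the correct degree because $\deg\alpha=2$ and $\ell_n$ has degree $2-n$ in the shifted convention, so each summand has degree $2-k+\sum\deg x_i$. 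Graded symmetry of $\ell^\alpha_k$ is inherited from that of $\ell_{d+k}$: $\alpha$ has even degree, so permuting the $\alpha$-slots introduces no signs.

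Second, the relations. Recall the $L_\infty$-relation for $n$ inputs $y_1,\dots,y_n$:
\[
\sum_{i+j=n+1}\ \sum_{\sigma\in\operatorname{Sh}(j,n-j)}\pm\,\ell_i\bigl(\ell_j(y_{\sigma(1)},\dots,y_{\sigma(j)}),y_{\sigma(j+1)},\dots,y_{\sigma(n)}\bigr)=0 .
\]
Take $n=d+k$ with $y=(\alpha^{\otimes d},x_1,\dots,x_k)$ and multiply by $1/d!$. In each unshuffle, split the $d$ copies of $\alpha$ into $a$ copies entering the inner $\ell_j$ and $b=d-a$ copies entering the outer $\ell_i$. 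The number of unshuffles producing a given split of the $x$'s, with $a$ copies of $\alpha$ inside, is $\binom{d}{a}$. Since $\binom{d}{a}/d!=1/(a!\,b!)$, summing over $d$ and regrouping gives
\[
\sum_{\text{unshuffles of }x}\pm\,\ell^\alpha_{i'}\bigl(\ell^\alpha_{j'}(x_{\sigma(1)},\dots),\dots\bigr)\;+\;\ell^\alpha_{k+1}\bigl(\mathcal C(\alpha),x_1,\dots,x_k\bigr)=0 .
\]
The extra term comes from the summands whose inner operation receives only $\alpha$'s and no $x$'s; the sum of those inner operations is exactly $\mathcal C(\alpha)=\sum_a\frac1{a!}\ell_a(\alpha^{\otimes a})$. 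Since $\alpha$ is Maurer–Cartan, $\mathcal C(\alpha)=0$ and this term vanishes, leaving precisely the $L_\infty$-relations for $\ell^\alpha_*$.

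Third, justify the regrouping. Interchanging the sums over $d$, $a$, $b$ is legitimate because all the series converge absolutely in the filtration topology. By the same estimate as in the first step, the terms in any fixed filtration level are finitely many, and continuity of $\ell_i$ with respect to $\mathcal F$ gives the identity in the completion.

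The main obstacle is the sign bookkeeping. It is handled by noting that $\alpha$ has even (shifted) degree, so moving copies of $\alpha$ past other inputs contributes no Koszul signs, and the signs in the regrouped sum coincide with those of the undeformed relation restricted to $x_1,\dots,x_k$.
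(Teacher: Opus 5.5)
The paper gives no proof of this proposition; it is quoted as standard background in the algebraic preliminaries. Your argument is the standard proof and is correct in structure, including the convergence and rearrangement justifications. It expands the undeformed relations on $(\alpha^{\otimes d},x_1,\dots,x_k)$, regroups using $\binom{d}{a}/d!=1/(a!\,b!)$, and kills the leftover term $\ell^\alpha_{k+1}(\mathcal C(\alpha),x_1,\dots,x_k)$ with the Maurer--Cartan equation.

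The one thing to fix is the degree bookkeeping in your first step, which mixes conventions.

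\textbf{Your stated convention.} With $\deg\ell_n=2-n$ and $\deg\alpha=2$, the summand $\ell_{d+k}(\alpha^{\otimes d};x_1,\dots,x_k)$ has degree $2-k+d+\sum_i\deg x_i$, not $2-k+\sum_i\deg x_i$. That degree depends on $d$, so $\ell^\alpha_k$ would not even be homogeneous. Moreover, in that unshifted convention Maurer--Cartan elements have degree $1$ and the $\ell_n$ are graded antisymmetric. Your sign justification (``$\alpha$ is even, so no Koszul signs'') would then also fail.

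\textbf{The paper's convention.} On $\mathfrak g$ the paper uses $\deg\ell_n=3-2n$, since the geometric operations land in $[3-2d]$. This is an even shift of the $L_\infty[1]$ convention, where $\deg\ell_n=1$ and $\deg\alpha=0$. In it, each summand has degree $3-2(d+k)+2d+\sum_i\deg x_i=3-2k+\sum_i\deg x_i$, independent of $d$. The $\ell_n$ are graded symmetric and $\alpha$ is genuinely even, so your sign argument goes through verbatim.

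State that convention once and use it throughout.
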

	One advantage of working over a field of characteristic zero, is that we have a canonical model for the $\textit{homotopy}$ $L_\infty$-algebra. Indeed, consider the graded commutative dg-algebra $\mathbb{C}[t, dt]$ with generators $t, dt$ such that $\deg t=0$, $\deg dt=1$ and $d(t)=dt$. 
	
	\begin{defn}[Homotopy Model]
		Denote by $\mathfrak g_t:=\mathfrak g\otimes_\mathbb C\mathbb{C}[t, dt]$ and define $\overline{\ell}_d$ by
		\[
		\overline{\ell}_k(x_1\otimes\beta_1, \dots, x_d\otimes\beta_d) = 
		\begin{cases}
			(-1)^\epsilon \ell_k(x_1, \dots, x_d)\otimes (\beta_1\dots\beta_d), & d\geq 2, \\
			\ell_1(x_1)\otimes\beta_1+(-1)^{|x_1|}x_1\otimes d\beta_1,   & d=1.
		\end{cases}
		\]
		where $\epsilon=\sum_{i<j}|x_i||\beta_j|$ and $|.|=\deg(.)$. 
	\end{defn}
	\begin{prop}
		$(\mathfrak g_t, \overline{\ell}_d)$ is an $L_\infty$-algebra. 
	\end{prop}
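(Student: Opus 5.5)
The plan is to verify the generalized Jacobi identities for $\overline{\ell}_*$ directly, reducing them to the $L_\infty$-relations of $(\mathfrak g,\ell_*)$ together with the facts that $\mathbb C[t,dt]$ is a graded-commutative dg-algebra and $d$ is a derivation. Equivalently, $(\mathfrak g_t,\overline{\ell}_*)$ is the tensor product of the $L_\infty$-algebra $\mathfrak g$ with the cdga $\Omega=\mathbb C[t,dt]$. I will argue in the coalgebra picture: an $L_\infty$-structure on $\mathfrak g$ is a degree-one coderivation $Q$ on the cofree cocommutative coalgebra $S^c(\mathfrak g[1])$ with $Q^2=0$, and I need to show the analogous statement on $S^c(\mathfrak g_t[1])$.

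\textbf{Step 1: split the operations.} Write $\overline{\ell}_*=\ell^\Omega_*+\delta$. Here $\ell^\Omega_k$ is the $\Omega$-multilinear extension $\ell_k(x_1,\dots,x_k)\otimes\beta_1\cdots\beta_k$ with the Koszul sign $(-1)^\epsilon$; this includes the term $\ell_1(x_1)\otimes\beta_1$ for $k=1$. The remaining piece is $\delta(x\otimes\beta)=(-1)^{|x|}x\otimes d\beta$, which is purely unary.

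\textbf{Step 2: the extended operations satisfy the $L_\infty$-relations.} I check that $\ell^\Omega_*$ satisfies the $L_\infty$-relations. In the relation $\sum_{i+j=k+1}\sum_\sigma \pm\,\ell_j(\ell_i(x_{\sigma(1)},\dots),\dots)$, each term is obtained from the corresponding term for $\mathfrak g$ by tensoring with the product $\beta_1\cdots\beta_k$. The sign is the one obtained by commuting the $\beta$'s past the $x$'s. Since $\Omega$ is graded commutative, this sign is the same for every shuffle $\sigma$ once the Koszul rule is applied uniformly. So the whole sum equals (the $\mathfrak g$-relation)$\otimes\beta_1\cdots\beta_k$, which vanishes.

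\textbf{Step 3: compatibility with the differential.} I check three things about $\delta$.
\begin{itemize}
\item $\delta^2=0$, since $d^2=0$.
\item $\delta$ anticommutes with $\ell^\Omega_1$. This holds because $\ell_1$ and $d$ act on different tensor factors and both have odd degree; the factor $(-1)^{|x|}$ is exactly the Koszul sign making them graded-commute.
\item For $k\ge2$, $\delta$ is a graded derivation of $\ell^\Omega_k$:
\[
\delta\,\ell^\Omega_k(\dots)=\sum_i \pm\,\ell^\Omega_k(\dots,\delta(x_i\otimes\beta_i),\dots).
\]
This follows from the Leibniz rule $d(\beta_1\cdots\beta_k)=\sum_i\pm\beta_1\cdots d\beta_i\cdots\beta_k$ together with $\deg\ell_k=2-k$.
\end{itemize}
In coalgebra language, these say that the coderivation $\hat\delta$ satisfies $\hat\delta^2=0$ and $[\hat\delta,\hat Q^\Omega]=0$. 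Therefore
\[
(\hat Q^\Omega+\hat\delta)^2=(\hat Q^\Omega)^2+[\hat Q^\Omega,\hat\delta]+\hat\delta^2=0,
\]
which proves the proposition.

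The only real obstacle is sign bookkeeping. I must confirm that the stated sign $(-1)^\epsilon$, with $\epsilon=\sum_{i<j}|x_i||\beta_j|$, together with the $(-1)^{|x_1|}$ in the unary term, matches the Koszul convention in the chosen (shifted or unshifted) grading. If it does not, I would first pass to the décalage $\mathfrak g[1]$, where all operations have degree $1$, verify the identities there, and then translate the signs back.
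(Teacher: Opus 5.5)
The paper does not prove this proposition; it cites it as the standard fact that tensoring an $L_\infty$-algebra with a commutative dg-algebra gives an $L_\infty$-algebra. Your argument is the standard proof of that fact. You split $\overline{\ell}_*=\ell^\Omega_*+\delta$. You obtain the relations for $\ell^\Omega_*$ from coherence of the Koszul rule and graded commutativity of $\mathbb C[t,dt]$, which also gives the graded symmetry needed to define $\hat Q^\Omega$ on $S^c$. You then check $\hat\delta^2=0$ and $[\hat Q^\Omega,\hat\delta]=0$. This is correct \emph{provided} $(-1)^\epsilon$ really is the Koszul sign.

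The point you left open in your last paragraph does not go through for the sign as printed, and your fallback would not fix it. The Koszul sign of $x_1\beta_1x_2\beta_2\cdots x_k\beta_k\mapsto x_1\cdots x_k\,\beta_1\cdots\beta_k$ is $\sum_{i<j}|\beta_i||x_j|$, whereas the paper writes $\sum_{i<j}|x_i||\beta_j|$. The difference is real. Take $X_1=x_1\otimes 1$ and $X_2=x_2\otimes dt$. Since $d1=d(dt)=0$, only $\ell^\Omega_*$ enters. With the printed sign, $\overline{\ell}_2(X_1,X_2)=(-1)^{|x_1|}\ell_2(x_1,x_2)\otimes dt$ and $\overline{\ell}_2(\overline{\ell}_1X_1,X_2)=(-1)^{|x_1|+1}\ell_2(\ell_1x_1,x_2)\otimes dt$. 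In the unshifted convention this gives
\[
\overline{\ell}_1\overline{\ell}_2(X_1,X_2)-\overline{\ell}_2(\overline{\ell}_1X_1,X_2)-(-1)^{|X_1|}\overline{\ell}_2(X_1,\overline{\ell}_1X_2)=2(-1)^{|x_1|}\,\ell_2(\ell_1x_1,x_2)\otimes dt,
\]
which is nonzero whenever $\ell_2(\ell_1x_1,x_2)\neq 0$, for example in a generic dg Lie algebra. The same computation in the shifted convention leaves $-2(-1)^{|x_1|}\ell_2(\ell_1x_1,x_2)\otimes dt$. So passing to the d\'ecalage cannot help, because regrading does not turn a non-Koszul sign into a Koszul one.

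Hence your Step~2 identification of the printed $(-1)^\epsilon$ with ``the sign obtained by commuting the $\beta$'s past the $x$'s'' is false as written: the printed $\epsilon$ has its indices transposed. With $\epsilon=\sum_{i<j}|\beta_i||x_j|$, which together with the unary sign $(-1)^{|x|}$ is the consistent $\mathfrak g\otimes\Omega$ convention, your Steps~1--3 prove the proposition unchanged. You should state this corrected sign explicitly rather than defer the check.
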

	\begin{defn}[Evaluation/Restriction Map]
		Given $c\in\mathbb{C}$, we have an evaluation map
		\begin{equation*}
			ev_c:\mathfrak g_t\rightarrow \mathfrak g
		\end{equation*}
		given by $ev_c(t)=c$ and $ev_c(dt)=0$. 
	\end{defn}
	\begin{prop}
	For any $c\in\mathbb{C}$, $ev_c:(\mathfrak g_t, \overline{\ell}_*)\rightarrow (\mathfrak g, \ell_*)$ is an $L_\infty$-morphism inducing a quasi-isomorphism $(\mathfrak g_t, \overline{\ell}_*)\simeq (\mathfrak g, \ell_*)$. 
	\end{prop}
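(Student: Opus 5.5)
The statement to prove is the last proposition: for any $c\in\mathbb C$, the evaluation $ev_c:(\mathfrak g_t,\overline{\ell}_*)\to(\mathfrak g,\ell_*)$ is an $L_\infty$-morphism and a quasi-isomorphism. I would take $ev_c$ to be a \emph{strict} morphism, with only a linear component $ev_c^{(1)}=ev_c$ and all higher components zero. Then the $L_\infty$-morphism identities reduce to
\[
ev_c\circ\overline{\ell}_d=\ell_d\circ ev_c^{\otimes d}\qquad(d\geq 1).
\]

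\textbf{Step 1: strictness.} The map $ev_c:\mathbb C[t,dt]\to\mathbb C$, sending $t\mapsto c$ and $dt\mapsto 0$, is a morphism of commutative dg-algebras. It is multiplicative because it is the algebra map determined on generators. It is a chain map because $ev_c(d\beta)$ has a factor of $dt$, and $\mathbb C$ has zero differential. Moreover $ev_c$ is zero on everything of positive degree.

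\textbf{Step 2: the identities for $d\geq2$.} Here
\[
ev_c\,\overline{\ell}_d(x_1\otimes\beta_1,\dots,x_d\otimes\beta_d)=(-1)^\epsilon\,\ell_d(x_1,\dots,x_d)\,ev_c(\beta_1)\cdots ev_c(\beta_d).
\]
If some $\beta_j$ has degree $1$, both sides vanish. Otherwise every $|\beta_j|=0$, so $\epsilon=0$, and the two sides agree.

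\textbf{Step 3: the identity for $d=1$.} We have
\[
ev_c\,\overline{\ell}_1(x\otimes\beta)=\ell_1(x)\,ev_c(\beta)\pm x\,ev_c(d\beta),
\]
and the second term vanishes by Step 1. So $ev_c$ intertwines the differentials.

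\textbf{Step 4: quasi-isomorphism.} Now only $\overline{\ell}_1$ matters. The inclusion $\mathfrak g\hookrightarrow\mathfrak g_t$, $x\mapsto x\otimes1$, is a chain map with $ev_c\circ\iota=\id$. So it suffices to show $\iota$ is a quasi-isomorphism. As a complex, $\mathfrak g_t\cong\mathfrak g\otimes\mathbb C[t,dt]$ with the tensor-product differential; the sign $(-1)^{|x|}$ is exactly the Koszul sign. The algebra $\mathbb C[t,dt]$ is acyclic apart from $\mathbb C$ in degree zero, by the Poincaré lemma. Explicitly, $t^n\mapsto nt^{n-1}dt$ is onto the degree-one part, and its kernel consists of the constants, using characteristic zero. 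Künneth over the field $\mathbb C$ then gives that $\iota$, and hence $ev_c$, is a quasi-isomorphism.

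If working over $\Lambda$ with completions, I would apply the same argument filtration-degree-wise. Every map above is filtered with respect to $\mathcal F$, and the complete filtrations are bounded below, so passing to associated graded pieces and invoking the comparison argument should suffice.

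The only delicate point I expect is the sign bookkeeping. I would need to confirm that the sign convention in the definition of $\overline{\ell}_1$ matches the Koszul tensor differential, so that Künneth applies, and that the sign $\epsilon$ vanishes when the $\beta$'s have degree zero. Both look immediate from the formulas in the definition of the homotopy model.
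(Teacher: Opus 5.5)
Your proof is correct. The paper states this proposition without proof, as a standard fact. Your argument is the standard one: $ev_c$ is strict because $\mathbb C[t,dt]\to\mathbb C$ is a dg-algebra map killing $dt$, and $ev_c\circ\iota=\id$ together with the Poincar\'e lemma for $\mathbb C[t,dt]$ and K\"unneth gives the quasi-isomorphism. This is also the mechanism the paper uses in Step~1 of the proof of the main theorem, where $\id\otimes ev_c$ is a quasi-isomorphism and the complete filtration is handled by the Eilenberg--Moore comparison theorem.

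The sign point you flag is harmless. Any sign $(-1)^{|x|+k}$ in $\overline{\ell}_1$ gives a complex isomorphic to the Koszul tensor product via $x\otimes\beta\mapsto(-1)^{k|\beta|}x\otimes\beta$, which is the identity on $\mathfrak g\otimes\mathbb C[t]$ and so commutes with $\iota$ and $ev_c$.
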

	
	\begin{defn}[Gauge-Equivalence]
		$\alpha_0, \alpha_1\in\operatorname{MC}(\mathfrak g^\Lambda)$ are said to be gauge-equivalent, and in this case we write $\alpha_0\sim\alpha_1$, if there exists $\alpha_t\in\operatorname{MC}(\mathfrak g^\Lambda_t)$ such that $ev_0(\alpha_t)=\alpha_0$ and $ev_1(\alpha_t)=\alpha_1$. 
	\end{defn}
	\begin{prop}
		Suppose that $\alpha_0,\alpha_1\in\operatorname{MC}(\mathfrak g^\Lambda)$ are two gauge-equivalent Maurer-Cartan elements of $(\mathfrak g^\Lambda, \ell_*)$. Then, the deformed $L_\infty$-algebras $(\mathfrak g^\Lambda, \ell^{\alpha_0}_1)$, $(\mathfrak g^\Lambda, \ell^{\alpha_1}_1)$ are quasi-isomorphic. 
	\end{prop}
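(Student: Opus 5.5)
The statement concerns gauge-equivalent Maurer--Cartan elements $\alpha_0\sim\alpha_1$ in $\mathfrak g^\Lambda$, and asks that the twisted differentials $\ell^{\alpha_0}_1$ and $\ell^{\alpha_1}_1$ (indeed the full twisted structures) be quasi-isomorphic. The plan is to realize the gauge equivalence as a twisted structure on the homotopy model $\mathfrak g^\Lambda_t$, and then show that both evaluation maps out of that model are quasi-isomorphisms.

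First, let $\alpha_t\in\operatorname{MC}(\mathfrak g^\Lambda_t)$ be the element provided by the definition of gauge equivalence, with $ev_0(\alpha_t)=\alpha_0$ and $ev_1(\alpha_t)=\alpha_1$. By the proposition that twisting by a Maurer--Cartan element yields an $L_\infty$-algebra, applied to $(\mathfrak g^\Lambda_t,\overline\ell_*)$, we obtain an $L_\infty$-algebra $(\mathfrak g^\Lambda_t,\overline\ell^{\alpha_t}_*)$.

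Second, we check that twisting is compatible with strict morphisms. The evaluation map $ev_c$ is a strict $L_\infty$-morphism: it has only a linear component, and it is a dga morphism $\mathbb C[t,dt]\to\mathbb C$ tensored with the identity. For a strict morphism $f$, one has directly
\[
f\bigl(\overline\ell^{\alpha_t}_k(x_1,\dots,x_k)\bigr)=\ell^{f(\alpha_t)}_k\bigl(f(x_1),\dots,f(x_k)\bigr),
\]
because $f$ commutes with each $\ell_{d+k}$ and with the convergent sums. This uses that $f$ preserves the filtration $\mathcal F$, which holds since $ev_c$ is $\Lambda$-linear. Consequently $ev_c$ is a strict $L_\infty$-morphism $(\mathfrak g^\Lambda_t,\overline\ell^{\alpha_t}_*)\to(\mathfrak g^\Lambda,\ell^{\alpha_c}_*)$ for $c=0,1$.

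Third, we show that $ev_c$ remains a quasi-isomorphism after twisting. This is the main obstacle, since the twisted differential differs from the untwisted one. The plan is a filtration spectral-sequence argument using the Novikov filtration $\mathcal F$. Because $\alpha_t\in\mathcal F_{\geq1}$, the twisted differential agrees with the untwisted one modulo $\mathcal F_{\geq n+1}$ on the associated graded $\mathcal F_{\geq n}/\mathcal F_{\geq n+1}$. On $\operatorname{gr}_{\mathcal F}$, the map $ev_c$ is therefore the untwisted $ev_c$ tensored with a graded piece of $\Lambda$, and it is a quasi-isomorphism by the proposition that $ev_c:(\mathfrak g_t,\overline\ell_*)\to(\mathfrak g,\ell_*)$ is one. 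Concretely, $\mathbb C[t,dt]\to\mathbb C$ is a quasi-isomorphism, and tensoring with $\mathfrak g\otimes\operatorname{gr}\Lambda$ over a field preserves this.

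The filtration is exhaustive and bounded below, and the spaces are completed. Hence the Eilenberg--Moore comparison theorem, or an inductive argument on the quotients $\mathfrak g/\mathcal F_{\geq n}$ followed by a Mittag-Leffler passage to the limit, upgrades the graded quasi-isomorphism to a quasi-isomorphism of the completed complexes. One care point is that the degree-wise completion $\hat\otimes$ is needed so that each degree is a genuine inverse limit. It is also essential to tensor with $\mathbb C[t,dt]$ before completing, so that the $t$-polynomial direction does not spoil convergence.

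Finally, we have a zig-zag of $L_\infty$ quasi-isomorphisms
\[
(\mathfrak g^\Lambda,\ell^{\alpha_0}_*)\xleftarrow{ev_0}(\mathfrak g^\Lambda_t,\overline\ell^{\alpha_t}_*)\xrightarrow{ev_1}(\mathfrak g^\Lambda,\ell^{\alpha_1}_*).
\]
Over characteristic zero, $L_\infty$ quasi-isomorphisms admit homotopy inverses, so the two twisted algebras are quasi-isomorphic. In particular their linear parts $\ell^{\alpha_0}_1$ and $\ell^{\alpha_1}_1$ have isomorphic cohomology.
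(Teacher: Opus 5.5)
Your argument is correct and matches the paper's approach. The paper states this proposition without proof, but Step~1 of the proof of the main theorem runs exactly your scheme: twist the homotopy model by the path, use the strict evaluation maps $ev_c$, note that on the Novikov associated graded the twisting disappears and $ev_c$ becomes $\mathrm{id}\otimes ev_c$ with $ev_c\colon(\mathbb C[t,dt],d)\to(\mathbb C,0)$ a quasi-isomorphism, and conclude with the Eilenberg--Moore comparison theorem. Your closing appeal to homotopy inverses is unnecessary, since the zig-zag through $(\mathfrak g^\Lambda_t,\overline\ell^{\alpha_t}_*)$ already shows the two twisted structures are quasi-isomorphic; over $\Lambda$ such inverses would in any case require the filtered version of that statement.
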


\subsubsection{Geometric Realization}
We start by setting and fixing our sign conventions. To this end we recall the following notions. Let $(\overline{X}, \theta)$ be a Liouville domain as in Definition \ref{LiouDomDefn} and fix a grading datum $\mathbb G$ as above. Let $H:S^1_t\times\overline{X}\rightarrow\mathbb R$ a non-degenerate time-dependent Hamiltonian and $J$ be an $\omega$-compatible almost complex structure on $\overline{X}$. We denote by $g:=\omega(., J.)$ where $\omega:=d\theta$, the induced metric on $\overline{X}$, from which we get an induced $L^2$-metric on $\mathcal L\overline{X}$, the free loop space of $\overline{X}$.
    \begin{defn}[Action Functional]
        For an element $x\in\mathcal{L}\overline{X}$, we define
        \begin{equation*}
            \mathcal{A}_H(x):=-\int_{S^1}x^*\theta+\int_{S^1}H(x(t))dt
        \end{equation*}
    \end{defn}
	\begin{defn}[Floer Cylinder]
		A Floer cylinder between two distinct Hamiltonian orbits $x_+, x_-\in\mathcal{X}(H)$ is a pseudo-holomorphic map $u:\mathbb{R}_s\times S^1_t\rightarrow Y$ such that:
		\begin{enumerate}
			\item $\partial_s u+J(u(s,t))(\partial_t u-X_H(u(s,t))=0$.
			\item $\int_{\mathbb{R}\times S^1}|\partial_s u|^2ds\wedge dt<\infty$.
			\item $\lim_{s\rightarrow\pm\infty}u(s,t)=x_\pm(t)$. 
		\end{enumerate}
        We denote by $\widetilde{\mathcal M}(x_-,x_+)$ the moduli space of all such Floer cylinders asymptotic to $x_-$ and $x_+$. 
	\end{defn}
    \begin{rmk}
        Observe that, due to the fact that $J$ is $s$-independent, it follows that for distinct choices of $x_-, x_+$, we have a free $\mathbb R$-action of the space of all Floer cylinders connecting given by $s_0.u(s, t)=u(s+s_0, t)$ for every $s_0\in\mathbb R$. 
    \end{rmk}
    It is a standard fact \cite{audin2014morse}, that for a generic time-dependent $J_t$ perturbation of $J$, $\widetilde{\mathcal M}(x_-,x_+)$ is a finite-dimensional smooth manifold of dimension $|x_+|-|x_-|$ and so is $\mathcal{M}(x_-,x_+):=\widetilde{\mathcal M}(x_-,x_+)/\mathbb R$.

    \begin{defn}
    \begin{enumerate}
        \item We define the Hamiltonian Floer complex by $CF^*(H):=\bigoplus_{x\in \mathcal{X}(H)}|o_x|_\mathbb G$.
        \item We define the Hamiltonian Floer differential by $\partial^{HF}x_+:=\sum_{x_-\neq x_+}|\mathcal{M}(x_-, x_+)|.x_-$, where the sum is taken over all $x_-\in\mathcal X(H)$ such that $\dim_\mathbb R\mathcal{M}(x_-, x_+)=0$ and $|\mathcal{M}(x_-, x_+)|$ denotes the signed count following our convention as above and section $6$ of \cite{borman2024l_}. 
    \end{enumerate}
    \end{defn}
    
    Suppose that $H_-, H_+$ are two, possibly time-dependent, Hamiltonians on $\overline{X}$ such that $H_-\leq H_+$ point-wise. Let $H_s$ be an $\mathbb R_s\times S^1$-dependent Hamiltonian such that such that:
    \begin{enumerate}
        \item $H_s\equiv H_-$ for $s<<0$,
        \item $H_s\equiv H_+$ for $s>>0$,
        \item $\partial_s H_s\leq 0$. 
    \end{enumerate}
    \begin{defn}[Continuation Map Cylinder]
        Given $x_-$ and $x_+$ periodic orbits of $H_-$ and $H_+$ respectively, as above. A continuation map cylinder between $x_-$ and $x_+$ and for a domain-dependent $J_{s,t}$ is a pseudo-holomorphic map $u:\mathbb R_s\times S^1_t\rightarrow Y$ such that:
        \begin{enumerate}
            \item $\partial_s u+J_{s,t}(u(s,t))(\partial_t u-X_{H_{s,t}}(u(s,t))=0$.
			\item $\int_{\mathbb{R}\times S^1}|\partial_s u|^2ds\wedge dt<\infty$.
			\item $\lim_{s\rightarrow\pm\infty}u(s,t)=x_\pm(t)$. 
        \end{enumerate}
        We denote by $\mathcal M^{al}_2(x_-,x_+)$ the moduli space of continuation map cylinders asymptotic to $x_-$ and $x_+$. 
    \end{defn}
    Also, it is a standard fact that for a generic $(\mathbb R\times S^1)$-dependent perturbations of $(H, J)$, $\mathcal M^{al}_2(x_-,x_+)$ is a finite-dimensional smooth manifold.
    \begin{defn}[Continuation Map]
        We define $c:CF^*(H_-)\rightarrow CF^*(H_+)$ the continuation cochain map, given by $c(x_-):=\sum_{x_+\in\mathcal{X}(H_+)} |\mathcal M^{al}_2(x_-,x_+)|.x_+$ where the sum is taken over all $x_+\neq x_-$ such that $\dim_\mathbb R \mathcal M^{al}_2(x_-,x_+)=0$. 
    \end{defn}
    In the compact case as above, at the linear level, the continuation cochain maps induces linear isomorphisms between the Hamiltonian Floer homologies for different choices of Hamiltonians. Yet in the open case, for instance in the case of a Liouville manifolds as in Definition \ref{CompletionDefn} or \ref{ConvexFiniteTypeDefn}, Hamiltonian Floer homology, when definied, is dependent on the choice of the Hamiltonian. Moreover, the continuation map, when defined, need not be a linear isomorphism.\\
   
We restrict our attention to the setting as in the Introduction. Namely, let $(M,\omega)$ be a (positively) monotone closed symplectic manifold as in Definition \ref{MonotoneDefn} and let $D:=D_1\cup\dots\cup D_N$ be orthogonal simple transverse symplectic divisors as in Definition \ref{OrthSimTranDiv}. Following \cite{borman2024l_}, \cite{strom2024maurer}, we assume that there exists non-negative rational numbers $\lambda_j\leq2$ such that
\begin{equation*}
    2c_1(M)=\sum_{1\leq j\leq N}\lambda_j\operatorname{PD}[D_j]. 
\end{equation*}
Denote by $X:=M\setminus D$ and note that, by Theorem \ref{McLeanStru}, $X$ admits the structure of a Liouville manifold which is convex and of finite-type as in Definition \ref{ConvexFiniteTypeDefn} given by $\rho$ as in Theorem \ref{radialFunctionThm} and Liouville form $\theta$ as constructed in the above section. In order to have a compactness theorem for the moduli spaces we are interested in, we have to pick our auxiliary data to $\textit{preserve}$ the geometry highlighted in the above sub-section. To be more precise, we consider the following definitions.
        \begin{defn}[Adapted Hamiltonians] \label{AdapHamDefn}
            For $j=1, \dots, N$ we denote by $C^\infty(M;D_j)$ to be the Frechet space of all smooth functions $H:M\rightarrow\mathbb R$ such that for any $x\in D_j$, $X_{H, x}\in T_xD_j$.
            We denote by $C^\infty(M;D):=\bigcap_{i=1}^N C^\infty(M;D_j)$. 
        \end{defn}
        \begin{defn}[Adapted Almost Complex structure] \label{AdapJdefn}
            An $\omega$-campatible almost complex structure $J$ on $M$ is said to be adapted if for any $j=1,\dots, N$ and for each $x\in D_j$ we have $J_x(T_xD_j)=T_xD_j$. We denote by $\mathcal{J}(M,D)$, the Frechet space of all such almost complex structures.
        \end{defn}
        \begin{lemma} \label{graphsubmanifold}
            Suppose that $H$ is an adapted Hamiltonian and $J$ an adpated almost complex structure on $M$. Let $(\Sigma,j)$ be a Riemann surface, possibly with boundary, and $\beta\in\Omega^1(\Sigma)$. Then, for every $j=1,\dots, N$, $\Sigma\times D_j\subset \Sigma\times M$ is an almost complex submanifold for $J_\nu$ as in Lemma \ref{GraphTrick}, where $\nu:=-(X_H\otimes\beta)^{0,1}$.
        \end{lemma}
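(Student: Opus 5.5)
The plan is to use the graph trick and reduce the statement to a pointwise linear-algebra check on the tangent spaces of $\Sigma\times D_j$. Lemma \ref{GraphTrick} is not visible here, so I will assume its standard form. For an inhomogeneous term $\nu\in\Omega^{0,1}(\Sigma, TM)$, the almost complex structure $J_\nu$ on $\Sigma\times M$ is
\begin{equation*}
J_\nu(z,x)(\xi, v) = \bigl(j\xi,\; J(x)v + \nu_{(z,x)}(\xi) - \dots\bigr),
\end{equation*}
that is, it is upper-triangular with respect to the splitting $T_{(z,x)}(\Sigma\times M)=T_z\Sigma\oplus T_xM$. Its diagonal blocks are $j$ and $J$, and the off-diagonal block is built linearly from $\nu$. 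The exact normalisation is $J_\nu(\xi,v)=(j\xi, Jv+2\nu(\xi))$ or $(j\xi, Jv + J\nu(\xi))$, depending on the paper's convention. The key structural fact is that the off-diagonal term is a linear combination of $\nu(\xi)$ and $J\nu(\xi)$, with coefficients depending only on $\xi$.

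Fix $j$ and a point $(z,x)$ with $x\in D_j$. Then $T_{(z,x)}(\Sigma\times D_j)=T_z\Sigma\oplus T_xD_j$. I must show that $J_\nu$ maps this subspace to itself.

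First, the diagonal part preserves it. Clearly $j$ preserves $T_z\Sigma$, and $J_x(T_xD_j)=T_xD_j$ because $J\in\mathcal{J}(M,D)$ is adapted (Definition \ref{AdapJdefn}).

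Next, the off-diagonal part. We have $\nu(\xi) = -(X_H\otimes\beta)^{0,1}(\xi) = -\tfrac12\bigl(\beta(\xi)X_H + J\,\beta(j\xi)X_H\bigr)$, so $\nu_{(z,x)}(\xi)\in\operatorname{span}_\mathbb{R}\{X_{H,x}, J_xX_{H,x}\}$. Since $H$ is adapted (Definition \ref{AdapHamDefn}), $X_{H,x}\in T_xD_j$, and $J_x$-invariance gives $J_xX_{H,x}\in T_xD_j$. Hence every off-diagonal contribution, whether $\nu(\xi)$ or $J\nu(\xi)$, lies in $T_xD_j$. It follows that $J_\nu(\xi,v)\in T_z\Sigma\oplus T_xD_j$.

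Finally, $\Sigma\times D_j$ is a closed embedded submanifold, and the previous two steps show its tangent bundle is $J_\nu$-invariant. So it is an almost complex submanifold, and this holds for each $j=1,\dots,N$. When $\Sigma$ has boundary the argument is unchanged, since it is purely pointwise.

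The only real obstacle is matching the precise formula for $J_\nu$ in Lemma \ref{GraphTrick}, including its sign and any factor of $2$. I will handle this by never using the specific coefficients: I only need that the $TM$-component of $J_\nu(\xi,v)$ is $J v$ plus an $\mathbb{R}$-linear combination of $\nu(\xi)$, $\nu(j\xi)$ and $J\nu(\xi)$, all of which lie in $T_xD_j$ by the argument above.
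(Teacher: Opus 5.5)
Your proposal is correct and takes the same approach as the paper. Both reduce the statement to the pointwise check that $J_\nu(\xi,v)=(j\xi,\,2J\nu(\xi)+Jv)$ maps $T_z\Sigma\oplus T_xD_j$ into itself, because $X_{H,x}\in T_xD_j$ and $J_x$ preserves $T_xD_j$. This is the paper's normalisation from Lemma \ref{GraphTrick}, and it falls within the class of formulas your hedged argument allows.
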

        \begin{proof}
            This follows from Lemma \ref{GraphTrick} and the fact that if $j=1, \dots, N$, $z\in \Sigma$, $x\in D_j$, $v\in T_z\Sigma$ and $V\in T_xD_j$ then,
            \begin{equation*}
                J_x(X_{H, x}(\beta_z(v)))-X_{H,x}(\beta_z(j_zv))+J_xV\in T_xD_j,
            \end{equation*}
            as $H$ and $J$ are adapted. 
        \end{proof}
        Now fix real numbers $0<\epsilon_-<\epsilon<\epsilon_+<1$ and denote by $X_\epsilon$ the Liouville domain given by $\{\rho\leq\epsilon\}$. We assume that $X_{\epsilon_-}$ is Liouville equivalent to $\overline{X}$ as in Theorem \ref{radialFunctionThm}. Let $h:\mathbb R\rightarrow\mathbb R$ be a smooth function such that:
        \begin{enumerate}
            \item $h', h''\geq 0$ for all $x\in\mathbb R$,
            \item for all $x\geq\epsilon_-$, $h(x)=ax+b$ where $a,b\in\mathbb R$ such that $a>0$ and for every $n\in \mathbb N$, $na\notin\operatorname{Spec}(R_{\theta|_{\partial X_\epsilon}})$,
            \item $h\equiv 0$ for all $x\leq\frac{\epsilon_-}{2}$. 
        \end{enumerate}
		
		\begin{thm}[\cite{strom2024maurer}]\label{BasicCofinalHamThm}
			There exists a cofinal sequence $(H_n)_{n\in\mathbb{N}}$ of non-degenerate admissible $S^1$-dependent admissible Hamiltonian functions on $M$ of the form $H_n=nh(\rho)+K_{n,t}\in C^\infty(S^1\times M, \mathbb R)$ such that:
			\begin{enumerate}
				\item $K_{n,t}\leq 0$ and $K_{n,t}\equiv 0$ on the $\textit{neck-region}$ $\{\epsilon_-\leq\rho\leq\epsilon_+\}$.  
				\item For every $n\in\mathbb{N}$, there are finitely many periodic orbits of $H_n$ that are in $X_\epsilon$. Moreover, such orbits are pair-wise disjoint.
				\item If $\gamma$ is a periodic orbit of $H_n$ for some $n\in\mathbb{N}$ such that $\operatorname{im}(\gamma)\cap X_\epsilon\neq\emptyset$. Then, $\operatorname{im}(\gamma)\subset X_\epsilon$. 
				\item  $\gamma$ is a periodic orbit of $H_n$ for some $n\in\mathbb{N}$ such that $\operatorname{im}(\gamma)\cap X_\epsilon=\emptyset$. Then, $\operatorname{im}(\gamma)\subset D$. 
			\end{enumerate}
		\end{thm}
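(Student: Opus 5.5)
The plan is to build $H_n$ in two stages. First, I would take the autonomous function $nh(\rho)$ and check that it is a smooth adapted Hamiltonian on all of $M$. Second, I would add a small, non-positive, time-dependent perturbation $K_{n,t}$ supported away from the neck, which makes every orbit non-degenerate without creating orbits that cross between $X_\epsilon$ and its complement.

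\textbf{Step 1: the autonomous part.} By item 4 of Theorem \ref{radialFunctionThm}, since $h\equiv 0$ near $0$, the function $nh(\rho)$ extends smoothly to $M$. Near each stratum $D_I$, $\rho$ is built out of the Poisson-commuting radial functions $r_i$ from the local model of \cite{mclean2012growth}. Hence $X_{nh(\rho)}$ is a combination of the generators $X_{r_i}$ of the Hamiltonian $(S^1)^{|I|}$-actions. These generators are tangent to every $D_j$, so $nh(\rho)\in C^\infty(M;D)$ in the sense of Definition \ref{AdapHamDefn}.

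The $1$-periodic orbits of $nh(\rho)$ fall into three regions:
\begin{itemize}
\item[(a)] On $\{\rho\leq \epsilon_-/2\}$, where $h\equiv 0$, they are constants.
\item[(b)] On the region where $h$ is linear of slope $na$, they correspond to Reeb orbits of period $na$. The condition $na\notin\operatorname{Spec}(R_{\theta|_{\partial X_\epsilon}})$ excludes these on the neck $\{\epsilon_-\leq\rho\leq\epsilon_+\}$.
\item[(c)] In the transition region where $h''>0$, they come in Morse--Bott $S^1$-families.
\end{itemize}
Cofinality follows because the slopes $na\to\infty$ and $H_n\leq H_{n+1}$ once the $K$'s are small.

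\textbf{Step 2: the perturbation inside $X_\epsilon$.} I would choose $K_{n,t}\leq 0$ with support in $\{\rho<\epsilon_-\}\cup\{\rho>\epsilon_+\}$, so that item 1 holds automatically. Inside $X_{\epsilon_-}$ I would take $K_{n,t}=-\delta f_t$. Here $f$ is a $C^2$-small Morse function on the constant region, and near each Morse--Bott circle of orbits $f$ is the standard time-dependent perturbation $\delta\,g(t)\,\phi(\text{orbit coordinate})$, as in Cieliebak--Floer--Hofer--Wysocki. This splits each circle into two non-degenerate orbits. Because $X_{\epsilon_-}$ is compact and the neck has no orbits, the outcome is finitely many orbits in $X_\epsilon$, and they are pairwise disjoint; this gives item 2. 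For $\delta$ small, no orbit can enter the neck, since there $X_{H_n}$ is Reeb flow with non-resonant period. So an orbit touching $X_\epsilon$ stays in $X_{\epsilon_-}$, which is item 3.

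\textbf{Step 3: near $D$ (expected main obstacle).} Outside $X_{\epsilon_+}$ I need every orbit to lie in $D$ (item 4), while keeping $H_n$ adapted and non-degenerate. Since $K_{n,t}$ will be chosen adapted, $X_{H_n}$ is tangent to each $D_j$. By ODE uniqueness, an orbit meeting $D$ lies in $D$, and orbits off $D$ stay off $D$.

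In the toric-type local model near $D_I$, the flow of $nh(\rho)$ rotates the $i$-th normal coordinate with angular speed $n h'(\rho)\,\partial\rho/\partial(r_i^2/2)$. The first thing I would try is to arrange, by choosing how $\rho$ interpolates between the $r_i$ near the corners, that these speeds avoid $2\pi\mathbb Z\setminus\{0\}$ off $D$. The near-$D$ region lies outside the set where $h$ is linear, so this is a condition on $\rho$, not on the fixed slope $a$. With that arranged, there are no closed orbits in $\{\rho>\epsilon_+\}\setminus D$.

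The remaining orbits lie inside $D$, and there I would make them non-degenerate with a generic adapted perturbation. To do this I would extend a generic time-dependent function on each $D_I$ to the normal directions in a way that is invariant under the torus action, so that it preserves $D$, and then subtract a constant so that $K_{n,t}\leq 0$. Non-degeneracy in the normal directions then reduces to the same non-resonance of the rotation speeds. The delicate points are checking this condition uniformly over the corners $D_I$ and ensuring that the perturbation does not create new orbits off $D$; both should follow by taking the perturbation $C^2$-small relative to the gap in the rotation speeds away from $2\pi\mathbb Z$.
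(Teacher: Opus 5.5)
The paper gives no proof of this statement; it is quoted from \cite{strom2024maurer}, which builds on \cite{borman2022quantum}. Your argument therefore has to stand on its own. Steps~1 and~2 are fine in outline, but Step~3 has a genuine gap, and it starts from a misreading of the set-up.

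You assert that the near-$D$ region lies outside the set where $h$ is linear. It does not. By hypothesis $h(x)=ax+b$ for all $x\geq\epsilon_-$. Since $nh(\rho)$ is smooth on $M$ and $a>0$, $\rho$ must stay bounded up to $D$. So all of $\{\rho\geq\epsilon_-\}\setminus D$, including the punctured neighbourhood of $D$ you are worried about, lies in the linear region. There $X_{nh(\rho)}=na\,X_\rho$. From $Z\rho=\rho$ you get $\theta(X_\rho)=\rho$, $X_\rho$ tangent to the levels of $\rho$, and $[Z,X_\rho]=0$. Hence the Liouville flow conjugates $X_{nh(\rho)}$ on each level $\{\rho=c\}\setminus D$ to a fixed multiple (proportional to $na$) of the Reeb flow of $\partial X_\epsilon$. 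The non-resonance hypothesis on $na$ then already excludes every $1$-periodic orbit of $nh(\rho)$ in $\{\rho\geq\epsilon_-\}\setminus D$. That is the mechanism for item~4, and there is no freedom in $\rho$ to exploit.

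Your substitute condition is in any case unattainable. In $UD_i\cap UD_j$ the derivative $\partial\rho/\partial(r_i^2/2)$ must pass from its nonzero value near $D_i$ to $0$ near the outer edge of $UD_i$, where $\rho$ has to match the model for $D_j$ alone. So for large $n$ the speed $na\,\partial\rho/\partial(r_i^2/2)$ sweeps an interval of length of order $na$ and crosses $2\pi\mathbb Z\setminus\{0\}$ off $D$. It is also the wrong criterion: a point off $D$ lies on a $1$-periodic orbit only if \emph{all} its speeds are simultaneously in $2\pi\mathbb Z$, which is exactly what the Reeb condition rules out.

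The same sweeping shows that your treatment of the orbits inside $D$ is incomplete.
\begin{itemize}
\item Wherever an in-$D$ speed crosses $2\pi\mathbb Z$ (for instance the rotation inside $D_j$ about $D_i\cap D_j$), $nh(\rho)$ has Morse--Bott circle families of $1$-periodic orbits contained in $D$. Perturbations extended torus-invariantly from the deepest strata preserve the $S^1$-symmetry that causes this degeneracy, so they cannot split these families. You need a splitting perturbation as in Step~2, carried out inside each stratum by adapted Hamiltonians.
\item Normal non-degeneracy of the constant orbits on $D_I$ is not implied by the Reeb hypothesis either, since at a corner one speed can be resonant while the others are not. This needs an extra genericity condition on $a$, or a small normal term in $K_{n,t}$.
\item The absence of new orbits off $D$ after perturbing should be deduced from a uniform positive distance of the speed vector $(na\,\partial\rho/\partial(r_i^2/2))_{i\in I}$ from $2\pi\mathbb Z^{I}$ (Reeb hypothesis plus compactness of the corner regions). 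Combine this with adaptedness, which makes the normal part of the perturbation vanish to first order along $D$; together they force $w_i=0$ for some $i$ at any perturbed fixed point near $D$.
\end{itemize}

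A smaller point: the two orbits obtained by splitting a Morse--Bott circle in Step~2 are both $C^0$-close to the same circle. So the disjointness in item~2 also needs an argument, e.g.\ genericity when $\dim_{\mathbb R}M\geq 4$.
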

		We will call Hamiltonians of the form as in the above Theorem, $\textit{basic family}$.\\
		In fact, periodic orbits of such a choice of Hamiltonians that lie in $X_\epsilon$ are distinguished from orbits that intersect (and hence lie in) $D$, using a filtration as in \cite{strom2024maurer}. 
		\begin{thm}[$\mathcal{P}$-filtration]\label{P-filtrationResult}
			Suppose that, $\epsilon_+-\epsilon_->0$ is sufficiently small then, there exists a constant $C=C(a, \epsilon)>2$ where $a$ is the slope of the linear part of $h$ such that:
			\begin{enumerate}
				\item $C$ is homogenous in $a$. That is $C(na, \sigma)=nC(a, \sigma)$, for all $n\in\mathbb{N}$.
				\item Given $n\in\mathbb{N}$ and a periodic orbit $x\in\mathcal{X}(H_n)$, the quantity
				\begin{equation}
					\mathcal{P}(x):= Cn_x+\frac{1}{\kappa}A(x)-|x|
				\end{equation}
				defines a complete and exhaustive filtration on $\oplus_{n\in\mathbb{N}}CF^*(X_\sigma, H_n)$. Where $\kappa$ is the monotonicity coefficient and $n_x=na$ the slope of $H_n$.
				\item For any $n\in\mathbb{N}$ and $x\in\mathcal{X}(H_n)$, $\mathcal{P}(x)<0\iff x$ is a $D$-orbit.  
			\end{enumerate}
		\end{thm}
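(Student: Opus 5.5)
We plan to deduce the three claims of Theorem \ref{P-filtrationResult} from an energy–index computation for orbits of the basic family $H_n=nh(\rho)+K_{n,t}$ of Theorem \ref{BasicCofinalHamThm}, split by where the orbit lies. Write $A(x)$ for the action-type term. For a capping disk $u$ of $x$ in $M$ we compare $\int u^*\omega$ with the relative class $[\omega]^{rel}=\sum_j\kappa_j\operatorname{PD}^{rel}(D_j)$. For $x$ in the neck or near $D$, the choice $\theta'|_{UD_I}=\frac{1}{2\pi}\sum_{i\in I}(\pi r_i^2-\kappa_i)d\phi_i$ gives an explicit relation between $A(x)$, the winding numbers $w_i(x)$ of $x$ around the $D_i$, and the capping intersection numbers $u\cdot D_i$. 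Similarly, $2c_1(M)=\sum\lambda_j\operatorname{PD}[D_j]$ relates the Conley–Zehnder grading $|x|$ (taken in the grading on $X$ determined by the trivialization coming from this relation) to $\sum_i\lambda_i w_i(x)$ plus a bounded local correction. Because $\kappa_j=\kappa\lambda_j$, the term $\frac{1}{\kappa}A(x)$ cancels exactly the part of $|x|$ depending on the capping. So $\mathcal P(x)$ is well defined, and it equals $Cn_x$ plus a combination of the $w_i(x)$ plus an error bounded independently of $n$.

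Next we separate the two kinds of orbits. If $\operatorname{im}(x)\subset X_\epsilon$, then $K_{n,t}\le 0$ together with $h'\ge0$ makes the action $\ge -n a\,\epsilon$-type terms, and the index is controlled. Choosing $C(a,\epsilon)$ larger than a constant times $a$ plus the Reeb/winding bounds then gives $\mathcal P(x)\ge 0$. If instead $\operatorname{im}(x)\subset D$, which by item (4) of Theorem \ref{BasicCofinalHamThm} is the only alternative, then the orbit winds $\approx na/(\text{period})$ times around some $D_i$ in the normal direction. Each winding lowers the action by about $\kappa_i$ and changes the index by $\lambda_i$. We tune $C$, linear in $a$, so that these contributions outweigh $Cn_x$ in the negative direction and give $\mathcal P(x)<0$. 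Homogeneity $C(na,\epsilon)=nC(a,\epsilon)$ holds by construction, since every quantity involved scales linearly with the slope. The requirement that $\epsilon_+-\epsilon_-$ be small enters only so that $K_{n,t}\equiv0$ on the neck and the neck orbits have errors uniformly $o(1)$ relative to $n$.

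Finally we check the filtration property. Floer differentials and continuation maps only count cylinders $u$ with energy $E(u)=\mathcal A(x_+)-\mathcal A(x_-)+\int\partial_sH\ge 0$ (using $\partial_sH_s\le0$), and $\dim\mathcal M=|x_+|-|x_-|\ge0$ for rigid counts. Since $n_x$ is nondecreasing along continuation maps and $C>0$, we get $\mathcal P(x_-)\ge\mathcal P(x_+)$ up to the intersection correction $\sum_i\lambda_i(u\cdot D_i)\ge0$. That correction is nonnegative by positivity of intersections, which holds because $J$ and $H$ are adapted (Lemma \ref{graphsubmanifold}). Exhaustiveness is clear. Completeness follows because for each fixed degree and bounded $n$ only finitely many orbits occur, by item (2) of Theorem \ref{BasicCofinalHamThm}.

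The main obstacle will be the sign bookkeeping in the $D$-orbit estimate: proving uniformly in $n$ that the action drop from normal winding strictly dominates $Cn_x$ while $C>2$ is kept and the bound $\mathcal P\ge0$ on $X_\epsilon$ orbits is not destroyed. The first thing to try is a local model computation in the polydisk charts $\prod_{i\in I}D_A$, where $H_n$ depends only on the $r_i$ near $D_I$, so that all orbits, indices and actions can be computed explicitly.
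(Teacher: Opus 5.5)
The paper gives no proof of this statement; it is imported from \cite{strom2024maurer}. Your argument therefore has to stand on its own, and it does not: the decisive step is missing, and the mechanism you sketch for it is wrong.

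\textbf{The $D$-orbit step.} A $D$-orbit lies in $D$ and does not wind around $D_i$. With its constant capping, its action is $\int H_n\,dt\approx n\,h(\rho|_D)$. Since $\rho$ increases towards $D$ and $h'\geq 0$, this is essentially the maximum of $H_n$, not something lowered by $\kappa_i$ per turn. What grows with $n$ is the rotation number $\tau$ of the linearized flow normal to $D_i$. That enters only $|x|$, at roughly $2$ per turn.

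\textbf{Choice of $C$.} More seriously, $Cn_x$ is the same shift for every generator of $CF^*(H_n)$. So item 3 says that, for every $n$, the number $-Cna$ separates the values of $\frac{1}{\kappa}A-|x|$ on $X_\epsilon$-orbits from those on $D$-orbits. You take $C$ ``large'' for the first family and then ``tune'' it for the second, but never show the two demands are compatible, and they are not independent.

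Here is why, in the local model near $D_i$:
\begin{itemize}
\item $Z\rho=\rho$ forces $\rho=g\,(\kappa_i/2\pi-r_i^2/2)$, with $g>0$ pulled back from $D_i$.
\item Hence $\tau=nag/2\pi$, and $\frac{1}{\kappa}\int H_n=\lambda_i\tau+nb/\kappa$ at a $D_i$-orbit, where $b<0$ is the intercept of the linear part of $h$.
\item The $D_i$-orbits and the $X_\epsilon$-orbits winding $\lfloor\tau\rfloor$ times around $D_i$ then both have $\frac{1}{\kappa}A-|x|=\frac{nb}{\kappa}-(2-\lambda_i)\tau+O(1)$.
\end{itemize}
So the coefficient of $n$ in $\mathcal P$ must vanish exactly on these families. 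A larger $C$ makes the $D$-orbits $\mathcal P$-positive for large $n$; a smaller one makes those $X_\epsilon$-orbits $\mathcal P$-negative.

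The separation therefore rests entirely on bounded terms:
\begin{itemize}
\item the fractional part of $\tau$;
\item the convexity of $h$ below $\epsilon_-$;
\item the Morse--Bott and Morse index shifts;
\item whatever $K_{n,t}\leq 0$ and the smallness of $\epsilon_+-\epsilon_-$ contribute.
\end{itemize}
None of this is computed. Relatedly, your claim that $\mathcal P$ is $Cn_x$ plus winding terms plus an $n$-independent error overlooks the order-$n$ term $\int H_n$.

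\textbf{The filtration step.} This fails as written too. You have just shown that the $u\cdot D_i$ contributions to $\frac{1}{\kappa}A$ and to $|x|$ cancel because $\kappa_j=\kappa\lambda_j$. So no residual correction $\sum_i\lambda_i(u\cdot D_i)$ is left for positivity of intersections to control.

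With your conventions, $E\geq 0$ gives $A(x_+)\geq A(x_-)$ and $\dim\geq 0$ gives $|x_+|\geq|x_-|$. Action and index therefore push $\frac{1}{\kappa}A-|x|$ in opposite directions. By capping invariance they must: they cancel exactly for a sphere, where $E=\omega(S)=2\kappa c_1(S)=\kappa\operatorname{ind}$. Along a rigid Floer trajectory, $\frac{1}{\kappa}A-|x|$ thus changes by $\pm(1-E/\kappa)$, which has no definite sign. So $E\geq0$ and $\dim\geq0$ do not give monotonicity, and you need further input. You must also check the jump of $Cn_x$ by $Ca$ along the continuation and telescope terms $c(y)-y$ against the same direction.

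Finally, finiteness of orbits for bounded $n$ does not give completeness of the infinite direct sum over $n$.
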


let $\{H_n\}_{n\in\mathbb{N}}$ be a basic cofinal family of non-degenerate Hamiltonians on $X$ and consider the Floer complex
	\begin{equation*}
		CF^*(X, H_n):=\bigoplus _{x\subset X}|o_x|_\mathbb{G}
	\end{equation*}
	generated by non-degenerate $1$-periodic orbits of $H_n$ lying in $X$, which are contractible in $M$. For any $n\in\mathbb{N}$, we denote by
	\begin{equation*}
		c:CF^*(X, H_n)\rightarrow CF^*(X, H_{n+1})
	\end{equation*}
	a continuation chain map.
	
	\begin{defn}[Chain Model of $SH^*(X)$]
		The cochain complex of the symplectic cohomology of $X$ is given by
		\begin{equation*}
			SC^*(X, \{H_n\}_{n\geq1}):=\bigoplus_{n\geq 1} CF^*(X, H_n)[\tau]
		\end{equation*}
		where $\tau$ is a formal variable of $\deg(\tau)=-1$ such that $\tau^2=0$.
		We define the symplectic cohomology differential by
		\begin{equation*}
			\partial^{SH}(x+ \tau y):=(-1)^{|x|}\partial^{HF}x+(-1)^{|y|}(\tau\partial^{HF}y+c(y)-y).
		\end{equation*}
		We define the symplectic cohomology of $X$ by
		\begin{equation*}
			SH^*(X):=H^*(SC^*(X), \partial^{SH}).
		\end{equation*}
	\end{defn}
	
	\begin{thm}
		$SH^*(X)$ only depends on the deformation type of the Liouville manifold $X$. 
	\end{thm}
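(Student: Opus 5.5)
The statement to prove is that $SH^*(X)$ depends only on the deformation type of $X$. Two things must be shown. First, for a fixed Liouville structure, $SH^*(X)$ is independent of the auxiliary data: the basic cofinal family $\{H_n\}$, the almost complex structures, and the continuation maps. Second, it is invariant under Liouville deformations. The basic tool is the telescope model $SC^*(X,\{H_n\})$. Its cohomology computes the direct limit $\varinjlim_n HF^*(X,H_n)$ along continuation maps, because homology commutes with the mapping telescope and hence with filtered colimits. So it suffices to produce compatible isomorphisms of these direct systems.

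\textbf{Independence of choices.} Take two basic cofinal families $\{H_n\}$ and $\{H'_n\}$ as in Theorem \ref{BasicCofinalHamThm}. Cofinality lets us choose a subsequence $n_k$ with $H_k\le H'_{n_k}\le H_{m_k}$, the inequalities holding up to compactly supported perturbations. Monotone homotopies with $\partial_sH_s\le 0$ then give continuation cochain maps in both directions. These commute up to homotopy with the structure maps, by the standard homotopy-of-homotopies argument counting one-parameter families of continuation cylinders. This produces mutually inverse maps on direct limits. The compactness behind these counts has two ingredients. The maximum principle for contact-type $J$ on the cylindrical end keeps solutions inside $X_{\epsilon_+}$, since the Hamiltonians are linear of slope $na$ in $\rho$ there. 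Lemma \ref{graphsubmanifold} and positivity of intersections with the almost complex submanifolds $\Sigma\times D_j$ prevent solutions from meeting $D$. Together these show that $X$-orbits do not interact with $D$-orbits, and the $\mathcal P$-filtration of Theorem \ref{P-filtrationResult} separates the two classes at the chain level.

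\textbf{Invariance under deformation.} Let $(X,\theta_s)$, $s\in[0,1]$, be a deformation of convex finite-type Liouville structures. By Gray stability for the contact boundaries, together with the Liouville flow, it suffices to treat a small interval of $s$. The deformed structures then differ by an exact perturbation $\theta_s=\theta_0+df_s$ plus a change of the radial coordinate, and the radial functions are comparable: $c^{-1}\rho_0\le\rho_s\le c\rho_0$ on the end for some constant $c>1$. Hamiltonians that are linear in $\rho_s$ with slope $na$ are therefore sandwiched between Hamiltonians linear in $\rho_0$ with slopes $na/c$ and $nac$. Cofinality then gives interleaved continuation maps, exactly as in the first step, and these compose to the structure maps. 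The result is an isomorphism $SH^*(X,\theta_0)\cong SH^*(X,\theta_s)$. Covering $[0,1]$ by finitely many such intervals completes the argument.

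\textbf{Main obstacle.} The hard part is compactness for continuation maps between Hamiltonians built from different radial functions. The maximum principle needs $J$ to be of contact type for a single structure at each point of the domain. My first attempt would be Abouzaid–Seidel style integrated maximum principle estimates on a region where both structures are cylindrical. I would also isolate the region where the Hamiltonians change inside a compact set, where energy bounds from the action functional $\mathcal A_H$ suffice. The gradings in $\mathbb G$ are canonical and preserved by continuation, so they cause no difficulty.
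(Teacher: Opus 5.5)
\paragraph{Verdict.} The paper gives no proof of this statement; it is recalled as standard background, so your argument has to stand on its own. Its architecture is the standard one: the telescope computes $\varinjlim_n HF^*(H_n)$, and mutually cofinal families are interleaved. However, the deformation step has a genuine gap, exactly at what you call the main obstacle, and your text covers it with ``exactly as in the first step''.

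\paragraph{Where the deformation step fails.} After Gray stability and a Moser-type isotopy (the isotopy is needed anyway, since $[\theta_s-\theta_0]\in H^1(X;\mathbb R)$ need not vanish), you are comparing two families on one conical end. One family is linear in $\rho_0$; the other is linear in $\rho_s=e^{k}\rho_0$, where $k$ is a non-constant function on the contact boundary. The maximum principle as you intend to use it (pointwise, or the Abouzaid--Seidel integrated version) requires the following near the separating hypersurface. Every Hamiltonian of the homotopy must be a function $h_\sigma(r)$ of one and the same radial coordinate $r$, $J$ must be of contact type for that $r$, and the homotopy must be monotone in $\sigma$.
\begin{itemize}
\item A Hamiltonian linear in $\rho_s$ at infinity equals $a e^{k}\rho_0$ there, so it is not a function of $\rho_0$ near any far-out level set.
\item $J$ cannot be of contact type for both radial coordinates at once.
\item The difference between the two admissible classes lives at infinity, so it cannot be isolated in a compact set.
\end{itemize}
Neither of your suggested fixes therefore applies. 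Even in the best case, where both structures share the same Liouville field on the whole end, the obstruction comes from the Hamiltonians, not from the ends. The same problem affects the homotopies that should show the interleaved maps compose to the structure maps. Reducing to a Liouville isomorphism by Moser does not avoid this either: such an isomorphism intertwines $Z$ at infinity, but not the radial functions.

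\paragraph{The missing idea: Viterbo functoriality.} From $c^{-1}\rho_0\le\rho_s\le c\rho_0$ you get nested Liouville subdomains of a single completion:
\[
\{\rho_0\le c^{-1}\}\subset\{\rho_s\le 1\}\subset\{\rho_0\le c\}\subset\{\rho_s\le c^2\}.
\]
\begin{itemize}
\item Transfer maps are built from step-shaped Hamiltonians. Near each boundary hypersurface these are functions of that hypersurface's own radial coordinate, so the maximum principle is applied one hypersurface at a time.
\item The orbits created in the collars are discarded by an action filtration.
\item Transfer maps are functorial, and they are isomorphisms for the trivial cobordisms $\{\rho_0\le c^{-1}\}\subset\{\rho_0\le c\}$ and $\{\rho_s\le 1\}\subset\{\rho_s\le c^2\}$.
\end{itemize}
The two composites of consecutive maps in the chain are therefore isomorphisms. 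Hence the middle map $SH^*(\{\rho_0\le c\})\to SH^*(\{\rho_s\le 1\})$ is both surjective and injective, i.e.\ an isomorphism.

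\paragraph{The $(M,D)$ material.} The $(M,D)$ material in your first step does not belong here, and is partly wrong.
\begin{itemize}
\item A Liouville deformation of $X$ need not extend over $(M,D)$ or respect the $\mathcal P$-filtration. Invariance must therefore be proved in the intrinsic model on $\hat X$.
\item Positivity of intersections only gives $u\cdot D\ge 0$. Continuation cylinders in $M$ between $X$-orbits can meet $D$; these are exactly the curves weighted by $q^{u\cdot D}$ in the deformed theory. You must impose $u\cdot D=0$, or work in $\hat X$, before the maximum principle confines solutions to $X_{\epsilon_+}$.
\end{itemize}
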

    Let $(d, p, n)$ be a triple corresponding to the number of marked points, flavor, and weights respectively. Suppose that $r\in\overline{\R}^{al}_{d,p,n}$ has two smooth components, say $r=(r_-,z_-)\#(r_+, z_+)$ glued along $z_-\sim z_+$ where $(r_\pm, z_\pm)\in\R^{al}_{d_\pm, p_\pm, n_\pm}$. We want to choose our perturbation data $(K_r, J_r)$ on $r$ as follows:
    \begin{enumerate}
        \item If $z_-$ has a non-zero weight, we require $(K_r, J_r)|_{r_\pm}=(K_\pm, J_\pm)$, where $(K_\pm, J_\pm)$ are the previously chosen perturbation data on eact component $r_\pm$, respectively.
        \item If $z_-$ has weight-zero, we require $(K_r, J_r)|_{r_-}=(K_-, J_-)$ while $(K_r, J_r)|_{r_+}=(0, J_{r_+})$.  
    \end{enumerate}
    Now fix a consistent choice of universal cylinderical ends $\epsilon_0,\dots,\epsilon_d:\overline{\R}^{al}_{d, p, n}\times Z_\pm\hookrightarrow\overline{\R}^{al, univ}_{d, p,n}$. Using the fact that $\operatorname{Sym}(d,p)$ acts freely on the main component $\overline{\R}^{al}_{d,p,n}\setminus\partial\overline{\R}^{al}_{d,p,n}$ and following the inductive argument as in Section 9g of \cite{seidel2008fukaya} after adhering to the gluing of domains construction in Section 2.4 of \cite{abouzaid2010open}, one can a fibre-wise smooth map
    \begin{equation}
        (K,J):\overline{\R}^{al}_{d,p,n}\rightarrow\Omega^1_{\overline{\R}^{al, univ}_{d,p,n}/\overline{\R}^{al}_{d,p,n}}(\overline{\R}^{al, univ}_{d,p,n}, C^\infty(M, D))\times C
    ^\infty(\overline{\R}^{al, univ}_{d,p,n}, \mathcal{J}(M,D)),
    \end{equation}
    where $\Omega^1_{\overline{\R}^{al, univ}_{d,p,n}/\overline{\R}^{al}_{d,p,n}}(\overline{\R}^{al, univ}_{d,p,n}, C^\infty(M, D))$ are fibre-wise $1$-forms on $\overline{\R}^{al, univ}_{d,p,n}$ with values in $C^\infty(M, D)$, such that:

    \begin{thm}[\cite{strom2024maurer}]\label{L_inftyPerturbationData}
    \begin{enumerate}
        \item $(K_r, J_r)$ is constant on spheres. That is, $(K_r, J_r)=(0, J_r)$ on fibres of the forgetful map $\overline{\R}^{al}_{d,n,p}\rightarrow\overline{\R}^{al}_{d',n',p'}$,
        \item if $r_i$ is a component of $r\in\overline{\R}^{al}_{d, n, p}$ then $(K_r, J_r)|_{r_i}=(K_{r_i}, J_{r_i})$,
        \item $(K, J)$ is $\operatorname{Sym}(d,p)$-equivariant. 
    \end{enumerate}
        In fact, we may choose $(K, J)$ to be of the following form:\\
        for $r\in \overline{\R}^{al}_{d, n, p}$, $K_r\equiv (h(\rho)+K_r)\otimes\gamma_r$ where $\gamma\in\Omega^1(r)$ satisfying:
        \begin{enumerate}
            \item $d\gamma_r\leq 0$,
            \item $\epsilon_i^*\gamma_r=n_idt$ for $|s|>>1$ and $i\in Z^c(n)$, 
            \item $K_r$ is a domain-dependent Hamiltonian such that $K_r\equiv 0$ on the neck-region $\{\epsilon_-\leq\rho\leq\epsilon_+\}$ and for $i\in Z^c(n)$ we have $n_i(h(\rho)+K_r)$ is a non-degenerate admissible Hamiltonian as in Theorem \ref{BasicCofinalHamThm}.
            \item The curvature of the Hamiltonian perturbation $\R(K_r)$ satisfies $\R(K_r)\leq C(na,\epsilon)d\gamma_r$.
            \item $J_r\in\mathcal{J}(M,D)$ is of contact-type on $\{\epsilon_-\leq\rho\leq 1+\epsilon\}$, that is $d\rho=\theta\circ J_r$ such that $\epsilon^*_iJ_r\rightarrow J_{n_i}$ as $s\rightarrow\pm\infty$ and
            \begin{equation*}
                |\nabla^{k}(\epsilon^*_iJ_r-J_{n_i})|\leq e^{-\delta |s|}
            \end{equation*}
                for some $\delta>0$. 
            
        \end{enumerate}
    \end{thm}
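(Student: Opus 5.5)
The plan is to build $(K,J)$ by induction on the dimension of the strata of $\overline{\R}^{al}_{d,p,n}$, ordered first by $d+|F|$ and then by the codimension of the boundary stratum. This follows the scheme of Section 9g of \cite{seidel2008fukaya}, with the cylindrical gluing of \cite{abouzaid2010open} in the presence of sprinkles.

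\textbf{Base case and the spaces of choices.} The base case is the cylinders $(d=1, F=\emptyset)$. There we take $\gamma=n_1\,dt$ and $K_r$ the $t$-dependent perturbation $K_{n_1,t}$ of the basic family of Theorem \ref{BasicCofinalHamThm}, together with the regular $J_{n_1}$. The key convexity fact is that every space of choices involved is contractible and closed under convex combination. These are:
\begin{itemize}
\item adapted Hamiltonians $C^\infty(M;D)$ (a linear condition $X_H|_{D_j}\in TD_j$);
\item adapted, $\omega$-compatible, contact-type $J$ in $\mathcal J(M,D)$ on $\{\epsilon_-\le\rho\le 1+\epsilon\}$;
\item sub-closed one-forms $\gamma$ with $d\gamma\le 0$ and prescribed weights $n_i\,dt$ on the ends;
\item perturbations $K$ vanishing on the neck region.
\end{itemize}
For adapted, compatible, contact-type $J$ this is a fibration with contractible fibres, as in the standard argument. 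Hence any choice given on a neighbourhood of the boundary $\partial\overline{\R}^{al}_{d,p,n}$ extends to the interior by a partition of unity. For the one-forms, Stokes gives $\int d\gamma = n_0-\sum_{i\ge1}n_i = |F|\ge 0$ with the correct sign convention, so the weight condition of the Definition of weights is exactly what makes a $\gamma$ with $d\gamma\le0$ and the right asymptotics exist. Concretely, we build it from the sprinkles, using the pullback of $d\log|\Psi(f)|$-type forms as in \cite{abouzaid2010open}.

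\textbf{Inductive step.} On a boundary stratum $r=(r_-,z_-)\#(r_+,z_+)$ the data are already fixed by induction, with two cases according to the weight at the node:
\begin{itemize}
\item if the node has nonzero weight, we use the data on each component;
\item if it has weight zero, we use $(K_-,J_-)$ on $r_-$ and $(0,J_{r_+})$ on the bubble.
\end{itemize}
We glue these using the gluing parameter $\rho$ and the consistent ends to obtain data on a collar, and then extend inward. The condition that the data be constant on spheres, i.e. pulled back along the forgetful map $\overline{\R}^{al}_{d,n,p}\to\overline{\R}^{al}_{d',n',p'}$, is arranged by constructing the data first on the reduced space $(d',n',p')$ and pulling back. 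This is compatible with the two boundary rules precisely because forgettable points carry weight zero. The excluded case $d'=1$, $F=\emptyset$ is handled by pulling back the cylinder data, which is translation invariant.

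\textbf{Equivariance, curvature, and regularity.} $\operatorname{Sym}(d,p)$-equivariance follows from averaging, or equivalently from constructing on the quotient, since the action is free on the interior. The inductively fixed boundary data are already equivariant, and averaging preserves every convex condition listed above. The curvature bound $\R(K_r)\le C(na,\epsilon)\,d\gamma_r$ requires choosing $K_r$ so that its variation in the domain is dominated by $d\gamma_r$. Where $d\gamma_r=0$ we therefore take $K_r$ to be locally constant in the domain direction, and elsewhere we scale by the homogeneity $C(na,\epsilon)=nC(a,\epsilon)$ of Theorem \ref{P-filtrationResult}. Finally, the exponential convergence $|\nabla^k(\epsilon_i^*J_r-J_{n_i})|\le e^{-\delta|s|}$ on the ends holds because $J_r$ is chosen to agree with $J_{n_i}$ exactly on the ends up to a compactly supported perturbation.

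The step I expect to be the main obstacle is reconciling three requirements at once: equivariance, the curvature inequality, and transversality. We must make generic perturbations inside $\mathcal J(M,D)$ to achieve regularity, for which Lemma \ref{graphsubmanifold} guarantees that the $D_j$ remain almost complex. These perturbations must also preserve $\R(K_r)\le C\,d\gamma_r$ near the sprinkle-gluing regions, where $d\gamma_r$ degenerates. My first attempt will be to perturb only $J$ (and $K$ only away from the support of $d\gamma$), keeping $K_r$ fixed where the inequality is tight. This uses the freedom to choose $J_r$ domain-dependently, and $J$ never enters the curvature term.
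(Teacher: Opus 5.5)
Your overall scheme matches what the paper indicates for this statement. The paper gives no proof of its own: it cites \cite{strom2024maurer} and points to the inductive construction of Section 9g of \cite{seidel2008fukaya}, the sprinkle gluing of Section 2.4 of \cite{abouzaid2010open}, the two node rules, and freeness of the $\operatorname{Sym}(d,p)$-action on the interior.

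\textbf{The gap: the curvature bound.} The mechanism you propose for it fails. Let $d_\Sigma$ denote differentiation in the domain directions. For product-form data the term $\omega(Y,Y)$ vanishes, so
\[
\mathcal{R}\bigl((h(\rho)+K_r)\otimes\gamma_r\bigr)=d_\Sigma K_r\wedge\gamma_r+\bigl(h(\rho)+K_r\bigr)\,d\gamma_r .
\]
Take $F=\emptyset$, e.g.\ a pair of pants. Then $n_0=\sum_i n_i$, Stokes gives $\int d\gamma_r=0$, and $d\gamma_r\le 0$ forces $d\gamma_r\equiv 0$. The required bound therefore becomes the one-sided pointwise condition $d_\Sigma K_r\wedge\gamma_r\le 0$ on the whole surface.

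Your prescription (``$K_r$ locally constant in the domain direction where $d\gamma_r=0$'') would make $K_r$ domain-independent on all of $\Sigma$. That contradicts the end conditions: on the $i$-th end $n_i(h(\rho)+K_r)$ must be the $t$-dependent $H_{n_i}$, i.e.\ $K_r=K_{n_i,t}/n_i$, and these differ from end to end.

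What the condition actually demands is that $K_r$ be monotone along the leaves of $\ker\gamma_r$, which run from the input ends to the output end. This compares $K_{n_i,t}/n_i$ with $K_{n_0,t'}/n_0$ pointwise on $M$, at the angles matched by the leaves. It therefore forces a one-sided ordering among the rescaled perturbations $K_{n,t}/n$ for different $n$, which is a constraint on how the basic family of Theorem \ref{BasicCofinalHamThm} is chosen. You never impose this, and it is not automatic for independently chosen non-degenerate perturbations.

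Where $d\gamma_r<0$, the inequality is equivalent to a pointwise estimate on the support of $d\gamma_r$ involving $h(\rho)+K_r-C$ and $d_\Sigma K_r\wedge\gamma_r$. The homogeneity $C(na,\epsilon)=nC(a,\epsilon)$ only says how the constant scales; it does not produce such a $K_r$.

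Two further consequences of the same structure:
\begin{itemize}
\item The condition is bilinear in $(K_r,\gamma_r)$, so interpolating both by a partition of unity does not preserve it. The workable order is to build $\gamma$ over the moduli space first; after that the condition is affine in $K_r$, hence convex.
\item ``Perturbing $K$ only away from $\operatorname{supp}d\gamma_r$'' is not a free move. There the constraint $d_\Sigma K_r\wedge\gamma_r\le 0$ has no built-in slack.
\end{itemize}

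\textbf{Two smaller points.} First, the sprinkle contributions to $\gamma$ should be cut-off \emph{angular} forms, $\chi(|\Psi(f)|)\,d\arg\Psi(f)$ suitably normalized. Forms of $d\log|\Psi(f)|$-type are exact, contribute nothing to $d\gamma$, and give a $ds$-component rather than a $dt$-component on the ends. Second, naive averaging is not available for $J$, since compatible almost complex structures do not form a convex set. Equivariance should come from constructing on the quotient of the interior, which is your alternative and is what the paper's remark about the free action is meant to supply.
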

    
     Let $x=(x_0, \dots, x_d)$ be a tuple such that:
	\begin{enumerate}
		\item If $i\in Z(n)$ then, $x_i=x^{D_{j_i}}$ a formal variable corresponding to the $j_i^{th}$-component of $D$. 
		\item If $i\in Z^c(n)$ then, $x_i\in CF^*(X, H_{n_i})$ where $n_i$ is the weight at the $i^{th}$-marked point such that $\mathcal{P}(x_i)>0$. Moreover, we require that if $i\neq j\in Z^c(n)$ then, $x_i\neq x_j$. That is, $x_i$ and $x_j$ are distinct Hamiltonian orbits.
	\end{enumerate}
	
	\begin{defn}[$L_\infty$-Moduli Spaces]\label{defnL_inftyModuli}
		We denote by $\mathcal{M}^{al}_{d, p, n}(x):=\{(r, u)\in\mathcal{M}^{al}_{d, p, n, m}(x'): [u].D=|Z(n)|\}$ where $x'$ is formed from $x$ after removing all elements corresponding to $i\in Z(n)$ and $m$ is a tangency vector condition satisfying $m_{i, k}=\delta_{k, j_i}$. 
	\end{defn}

    The major technical result of \cite{borman2024l_}, \cite{strom2024maurer} is that for a generic choice of perturbation data as in the Theorem \ref{L_inftyPerturbationData}, the $L_\infty$-moduli spaces as in the above definition are smooth finite-dimensional manifolds.

    \begin{thm}[\cite{borman2024l_}, \cite{strom2024maurer}]\label{CompactnessResult}
       Suppose that, $|Z^c(n)|\neq1$ or $x_0\neq x_1$ or $F\neq\emptyset$ or $m_i\geq 1$ then, for a generic choice of perturbation data as in Theorem \ref{L_inftyPerturbationData}, $\mathcal{M}^{al}_{d, p,n, m}(x)$ is an oriented finite-dimensional smooth manifold.\\
       In the case when $|Z^c(n)|=1$, $x_0=x_1$, $F=\emptyset$ and $m_i=0$ then, $\mathcal{M}^{al}_{d, p,n, m}(x)$ consists of solutions of the form $x:S^1\rightarrow M$ such that $x=x_0=x_1$. That is, trivial solutions parameterizing Hamiltonian orbits.\\
       Moreover, if $\dim_\mathbb R\mathcal{M}^{al}_{d, p,n, m}(x)=0$ then, $\mathcal{M}^{al}_{d, p,n, m}(x)$ is a compact oriented manifold.\\
       In the case when $\dim_\mathbb R \mathcal{M}^{al}_{d, p,n, m}(x)=1$ and $m_{i,k}=\delta_{i,k}$, that is as in Definition \ref{defnL_inftyModuli}, and for a sequence $\{u_n\}_{n\geq 1}\subseteq\mathcal{M}^{al}_{d, p,n, m}(x)$ having a stable map limit $u_\infty$, we have that:
       \begin{enumerate}
           \item $u_\infty$ has no spherical components,
           \item $u_\infty$ has no component asymptotic to a $\mathcal{P}$-negative orbit. 
       \end{enumerate}
    \end{thm}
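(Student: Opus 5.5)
The plan is to treat transversality, the trivial-cylinder case, and compactness separately, following the Cieliebak--Mohnke/Ionel--Parker style of argument adapted to the divisor complement as in \cite{borman2024l_}, \cite{strom2024maurer}. For transversality, use the graph trick: by Lemma \ref{graphsubmanifold}, with adapted $(K,J)$ from Theorem \ref{L_inftyPerturbationData}, each $\Sigma\times D_j$ is an almost complex submanifold for $J_\nu$. Intersections of $u$ with $D_j$ are then isolated and of positive local multiplicity, and the tangency orders $m_{i,k}$ cut out the moduli space via jet evaluation maps. Next I would run the standard Sard--Smale argument on the universal moduli space over the Banach manifold of adapted perturbation data, which are required to be compatible with gluing and $\operatorname{Sym}(d,p)$-equivariant. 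The one point needing care is somewhere-injectivity. It holds whenever the domain is stable, or carries a sprinkle, or has distinct asymptotics $x_0\neq x_1$, or has a tangency marked point ($m_i\geq 1$). In each of these cases the perturbation is genuinely domain dependent near some point where $u$ is not constant, so the linearized operator together with the jet evaluation is surjective.

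In the excluded case ($|Z^c(n)|=1$, $x_0=x_1$, $F=\emptyset$, $m_i=0$) the domain is a cylinder with $d\gamma=0$ and translation-invariant data. Here I would compute the energy: $E(u)=\mathcal A(x_0)-\mathcal A(x_1)+\int u^*\R(K)$, which vanishes since the curvature term is zero when $d\gamma=0$. So $\partial_s u=0$, and $u$ is a trivial cylinder. Orientations are then inherited from the orientation lines $o_{x_i}$ and the complex orientation of the domain moduli space via the usual gluing of determinant lines.

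For compactness I would first obtain an a priori energy bound. Writing the topological energy relative to $\omega^{rel}=\sum\kappa_j\operatorname{PD}^{rel}(D_j)$, the intersection number $[u].D=|Z(n)|$ and the conditions $d\gamma_r\le0$, $\R(K_r)\le C\,d\gamma_r$ bound the geometric energy in terms of actions of the asymptotics. Confinement comes from the maximum principle on the contact-type neck $\{\epsilon_-\le\rho\le\epsilon_+\}$, where $K_r\equiv0$ and $J_r$ is of contact type. Together with Gromov--Floer compactness on the closed manifold $M$, this gives stable-map limits, and in dimension $0$ regularity shows the limit lies in the top stratum, so the moduli space is compact.

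The main obstacle is dimension $1$, where I must exclude (1) sphere bubbles and (2) breaking along $\mathcal P$-negative orbits. For (1), a sphere bubble $v$ has $c_1(v)>0$ by monotonicity, and it meets $D$ positively. Either it lies in $D$, or it absorbs intersection points with $D$ that were forced to be transverse (since $m_{i,k}=\delta_{i,k}$). The plan is to show that removing the bubble drops the index by at least $2$. Using $2c_1=\sum\lambda_j\operatorname{PD}[D_j]$ with $\lambda_j\le2$, the sum of $2c_1(v)$ and the lost incidence conditions yields a configuration of negative virtual dimension in the regular strata, which is therefore empty. For (2), use Theorem \ref{P-filtrationResult}. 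Along a breaking, the $\mathcal P$-value can only decrease, by the energy inequality combined with the intersection count, since $Cn_x+\frac1\kappa A(x)-|x|$ is exactly the combination controlled by energy and index. Because all inputs and outputs have $\mathcal P>0$ and the moduli space is one-dimensional, an intermediate $D$-orbit with $\mathcal P<0$ would force a component of negative expected dimension. I would check this inequality first, as it is where the constant $C>2$ and the homogeneity $C(na,\epsilon)=nC(a,\epsilon)$ enter.
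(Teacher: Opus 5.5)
The paper does not prove this statement itself; it is quoted from \cite{borman2024l_}, \cite{strom2024maurer}. The relevant comparison is Steps~1--2 of the proof of Lemma~\ref{keylemma}, which rerun the same compactness argument for the closed--open moduli spaces.

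Your transversality outline and the energy argument for the trivial-cylinder case are standard and fine in outline. In the latter, first use $[u].D=0$ and positivity of intersections to put $u$ in $X$, and only then write the energy as an action difference. The compactness part, however, has a genuine gap, and it affects both the dimension-$0$ compactness and the two exclusions in dimension~$1$.

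Every time you conclude emptiness from ``negative expected dimension'', you are assuming that \emph{every} component of the limit is regular. With $D$-adapted data ($X_K$ tangent to $D$, $J$ preserving $TD$) this fails. $D$ is invariant, so sphere bubbles and components asymptotic to $D$-orbits (which lie in $D$) can be contained in $D$ or be multiply covered. Their normal linearized operators cannot be made surjective, and they can have negative index. For the same reason, your claim that $\mathcal P$ ``can only decrease along a breaking'' is not available: the $-|x|$ part of $\mathcal P$ is controlled only through an index bound on each component.

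The missing idea is the bookkeeping of \cite{strom2024maurer}, Lemma~3.6:
\begin{itemize}
\item Prune the weight-zero dashed edges and give the new leaves the auxiliary cap $|\hat x_e|=2$, $\mathcal A(\hat x_e)=0$, $\mathcal P(\hat x_e)=-2$.
\item Split the vertices into the subtrees $U$ rooted at the first $\mathcal P$-negative edges and the remainder $I$.
\item On $U$ use \emph{only} the energy/action estimate: additivity of weights, the sprinkle cost, and $|\hat x_{in(v)}|>Cn_{in(v)}+\frac1\kappa\mathcal A(\hat x_{in(v)})$ for $v\in V^-$.
\item Use $\operatorname{ind}\geq 0$ only on $I$, whose components are not contained in $D$.
\end{itemize}
Summing gives, in the form displayed in Step~1 of the proof of Lemma~\ref{keylemma},
\[
\sum_{v\in U}\Bigl(2d_v-2+\bigl(1+C+\tfrac{1}{\kappa}C_{n_v}\bigr)|F_v|+\sum_{e}\mathcal P(\hat x_e)\Bigr)\leq \dim_{\mathbb R}\mathcal M_{d,k,p,n}(x,y)+1-|I|,
\]
with $1+C+\frac1\kappa C_{n_v}>3$. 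It is the finite slack in this global inequality that excludes $\mathcal P$-negative components. In the paper's rerun, this slack suffices in dimension $0$ for inputs in $\mathfrak g_{>0}$, but in dimension $1$ only for inputs in $\mathfrak g_{>1}$. So the argument is a count over the whole tree, not a componentwise monotonicity of $\mathcal P$.

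The same issue affects your sphere-bubble step. The paper never uses the moduli space of the bubble, which need not be regular. It only uses $\operatorname{ind}(u_v)\geq 0$ for the non-spherical component $u_v$ to which a bubble tree $B_e$ is attached. This costs $2([u_v].D+c_1(B_e)-1)$ per pruned leaf and leads to $2\sum_{e:n_e=0}([u_v].D+c_1(B_e)-1)\leq 1-|iE(T')|$.

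Monotonicity and positivity of intersection make each term nonnegative, but a term vanishes when $c_1(B_e)=1$ and $[u_v].D=0$. In that case the dimension count gives no contradiction, contrary to your claim that removing a bubble always drops the index by at least $2$. This borderline case has to be excluded separately, via the proofs of Lemmas~3.4 and 3.8 of \cite{strom2024maurer}, or Corollary~5.13 and Lemma~5.16 of \cite{perutz2022constructing}.

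Also, the hypothesis $\lambda_j\leq 2$ you invoke points the wrong way. With nonnegative intersection numbers it gives $2c_1(v)=\sum_j\lambda_j\, v\cdot D_j\leq 2\,v\cdot D$, an upper bound on $c_1(v)$, whereas an index drop requires a lower bound.
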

    Now let $x_1, \dots, x_k$ be Hamiltonian orbits $x_i\in\mathcal{X}_{n_i}$ such that $\mathcal{P}(x_i)>0$. That is, they are associated to unforgettable marked points carrying weight $n_i>0$. Let $x_0\in\mathcal{X}_n$ for some $n>0$ such that $\mathcal{P}(x)>0$. For a $\textit{rigid}$ curve $u\in\mathcal{M}^{al}_{d,n,p}(x_0,x_1,\dots,x_d)$ the linearization of the (Hamiltonian perturbed) Cauchy-Riemann operator $D_u$ provides us with a linear map
    \begin{equation}
        |o_u|_\mathbb G:t^{i_1}|o_{x_1}|_\mathbb G\otimes\dots\otimes t^{i_d}|o_{x_d}|_\mathbb G\rightarrow |o_{x_0}|_\mathbb G[3-2d]
    \end{equation}
		where $i_j:=|p^{-1}(\{j\})|$. 
    \begin{defn}
        We set $\tilde{\ell}_d(x_1,\dots,x_d):=\sum |\mathcal{M}^{al}_{d,n,p}(x_0,x_1, \dots,x_d)|x_0$, where the sum is taken over all $p,n, x_0$ such that $\dim_\mathbb R\mathcal{M}^{al}_{d,n,p}(x_0,x_1, \dots,x_d)=0$.\\
    \end{defn}
   $\tilde{\ell}_d$ is a linear operator from $\mathcal{P}_{>0}SC^*(X; \{H_n\}_{n\geq1})^{\otimes d}\rightarrow \bigoplus_{n\geq 1}CF^*(X; H_n)$. Moreover, using Section $6$ of \cite{borman2024l_}, $\tilde{\ell}_d$ has a unique $\tau$-extension making it commute with $\partial_\tau$. We abuse notation and still denote such extension by $\tilde{\ell}_d$.\\
   In the case when an $x_i=x^{D_j}$ we follow the above construction, using rigid curves from the $L_\infty$-moduli space as in Definition \ref{defnL_inftyModuli} with the sign convention that $|o_{x^{D_i}}|_\mathbb G\equiv\mathbb K$ and $|x^{D_i}|=2-\lambda_i$. 

    \begin{defn}[Partial $L_\infty$-operations]\label{PartialLinfty}
       Denote by $\mathfrak g:=SC^*(X_\epsilon; \{H_n\}_{n\geq1})\oplus\bigoplus_{j=1}^{N}\mathbb C[\tau].|o_{x^{D_j}}|$ and for $a\in\mathbb R$ we denote by $\mathfrak g_{>a}:=\mathcal{P}_{>a}(SC^*(X_\epsilon; \{H_n\}_{n\geq1}))\oplus\bigoplus_{j=1}^{N}\mathbb C[\tau].|o_{x^{D_j}}|$. We define $\ell_d:\mathfrak g_{>0}^{\otimes d}\rightarrow\mathfrak g_{>0}$ by
       \[
       \ell_d(x_1,\dots,x_d)=
       \begin{cases}
    \tilde{\ell}_d(x_1,\dots,x_d) & \text{for }d\geq 2,\\
    \tilde{\ell}_1(x_1)+(-1)^{|x_1|}\partial_\tau x_1 & \text{for }d=1,
       \end{cases}
       \]
       with the understanding that if all the inputs of $\ell_d$ are of the form $x^{D_j}$ then we set it to zero. 
    \end{defn}

  \subsubsection{Construction of the Maurer-Cartan Element}
  By construction, we have a non-trivial tautological solution of the curvature equation $\mathcal{C}(\alpha)=0$ given by $\alpha:=\sum_{1\leq j\leq N}x^{D_j}.q_j$. Using the homotopy model $\mathfrak g_t^\Lambda:=\mathfrak g^\Lambda\hat{\otimes}\mathbb C[t, dt]$ as described above, consider the $1^{st}$-order linear ODE given by
  \begin{equation*}
      \frac{d}{dt}\beta(t)=\sum_{d\geq0}\frac{1}{d!}\ell_{d+1}(\beta(t), \dots,\beta(t), t\alpha),\\
      \beta(0)=\alpha.
  \end{equation*}
	Using the Fundamental theorem of ODE's, we solve such differential equation term by term, giving us a unique, convergent in the $q$-adic topology on $\Lambda$, solution $\beta(t)\in\mathfrak g_t^\Lambda$. 
    \begin{prop}[\cite{strom2024maurer}]\label{GaugePathProp}
        The unique solution $\beta(t)\in\mathfrak g^\Lambda_t$ has the form $\beta(t)=\alpha(t)+\gamma(t)dt$ such that:
        \begin{enumerate}
            \item $\alpha(t)\in(\mathfrak g^{\Lambda})^2\hat{\otimes}\mathbb C[t]$,
            \item $\gamma(t)\in(\mathfrak g^\Lambda)\hat{\otimes}\mathbb C[t]$,
            \item $\beta(t)\in\operatorname{MC}(\mathfrak g^\Lambda_t)$ and $\beta(t)\in\mathfrak g_{\geq 1,t}^\Lambda$,
        \end{enumerate}
        where $(\mathfrak g^{\Lambda})^n$ denotes degree $n$ elements. 
    \end{prop}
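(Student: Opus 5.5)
This statement is Proposition~\ref{GaugePathProp}, and I would prove it by solving the ODE order by order in the $q$-adic filtration.

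\textbf{Setting up the recursion.} Write $\beta(t)=\alpha(t)+\gamma(t)\,dt$ and expand $\beta(t)=\sum_{m\geq 1}\beta_m(t)$, where $\beta_m$ is homogeneous of total $q$-degree $m$ in the monomials $q^{\underline{k}}$. The initial term is $\alpha=\sum_j x^{D_j}q_j$, which lies in $\mathcal F_{\geq 1}$. Each $\ell_{d+1}$ is $\Lambda$-linear, and its argument contains at least one factor of $t\alpha\in\mathcal F_{\geq 1}$ as well as $d$ factors of $\beta$. So the order-$m$ part of the right-hand side involves only $\beta_{m'}$ with $m'<m$; the exception is $d=0$, where the term is $\ell_1(t\alpha)$, which only involves $\alpha$. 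The ODE therefore becomes a triangular system: at each order we integrate a polynomial in $t$ with coefficients in $\mathfrak g$. Induction on $m$ then gives existence, uniqueness, polynomiality in $t$ at each order, and convergence in the completed tensor product $\mathfrak g^\Lambda_t$. Since every term carries at least one power of $q$, we get $\beta(t)\in\mathcal F_{\geq1}$.

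\textbf{Degrees and filtration.} For the degree statement, $\deg x^{D_j}=2-\lambda_j$ and $\deg q_j=\lambda_j$, so $\alpha$ has degree $2$. The operation $\ell_{d}$ has degree $3-2d$ in the grading convention of the orientation-line maps. I would check by induction that every term produced keeps $\alpha(t)$ in degree $2$ and the $dt$-component in total degree $2$, so that $\gamma$ has degree $1$. For the membership $\beta(t)\in\mathfrak g_{\geq1,t}^\Lambda$, I would use that all outputs of $\ell_d$ on inputs from $\mathfrak g_{>0}$ land in $\mathcal P$-positive pieces. I would then refine this with the action/slope estimate underlying Theorem~\ref{P-filtrationResult}: each insertion of $x^{D_j}$ raises $\mathcal P$ by an amount compensated by $q_j$, which should give the bound $\geq1$.

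\textbf{The Maurer--Cartan property (main step).} Let $\mathcal C(\beta(t))$ be the curvature of $\beta(t)$ in $(\mathfrak g^\Lambda_t,\overline\ell)$. I expect this to be the hardest step, because the $\ell_d$ are only \emph{partial} $L_\infty$-operations (Definition~\ref{PartialLinfty}, with the vanishing convention on pure-divisor inputs). The identities therefore hold only where the compactness of Theorem~\ref{CompactnessResult} excludes sphere bubbles and $\mathcal P$-negative breakings.

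The plan is to differentiate $\mathcal C(\beta(t))$ in $t$ using the ODE. Then I would apply the $L_\infty$ relations, which hold on $\mathfrak g_{>0}$ by the boundary analysis of the one-dimensional moduli spaces in Theorem~\ref{CompactnessResult}. This should yield a linear ODE of the form
\[
\tfrac{d}{dt}\mathcal C(\beta(t))=\ell^{\beta(t)}_1\text{-type operator applied to }\mathcal C(\beta(t)),
\]
with initial value $\mathcal C(\beta(0))=\mathcal C(\alpha)=0$. The initial value vanishes by the tautological-solution remark, namely the convention that $\ell_d$ vanishes on pure $x^{D_j}$ inputs. Uniqueness for this linear ODE, again solved order by order in $q$, then forces $\mathcal C\equiv0$.

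Finally, the $dt$-component of the Maurer--Cartan equation in $\mathfrak g_t$ reads
\[
\dot\alpha(t)=\ell_1^{\alpha(t)}(\gamma(t)).
\]
I would compare this with the ODE to identify $\gamma(t)$ explicitly, essentially $\gamma(t)=t\alpha$ dressed by the deformed brackets. For this identification I would need to verify the signs coming from the $(-1)^{|x|}x\otimes d\beta$ term in $\overline\ell_1$.
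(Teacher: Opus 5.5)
The paper gives no argument of its own here. It solves the ODE ``term by term'' in the $q$-adic topology, which your order-by-order triangularity argument reproduces, and otherwise cites Lemma~4.10 and Corollary~4.11 of \cite{strom2024maurer}. Measured against what the statement needs, your proposal has two genuine gaps.

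\textbf{The gauge generator has the wrong degree.} You read the last input of the ODE literally as $t\alpha$, but the paper's display writes $t$ where the telescope variable $\tau$ is meant. Your own degree bookkeeping shows the literal reading cannot work.

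\begin{itemize}
\item Since $\ell_{d+1}$ has degree $3-2(d+1)=1-2d$, the term $\ell_{d+1}(\beta(t)^{\otimes d},t\alpha)$ has degree $2d+2+1-2d=3$. The induction you announce for item~1 therefore fails already at $q$-order one.
\item Your guess ``$\gamma(t)\approx t\alpha$ dressed by brackets'' has degree $2$. This contradicts your own, correct, requirement $|\gamma|=1$.
\end{itemize}

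The generator must be the degree-one element $\tau\alpha=\sum_j q_j\,\tau x^{D_j}$. This is why $\mathfrak g$ contains $\mathbb C[\tau]\cdot|o_{x^{D_j}}|$ and not just the $x^{D_j}$.

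With this reading the logic also changes.
\begin{itemize}
\item One \emph{prescribes} $\gamma:=\tau\alpha$ and solves $\dot\alpha(t)=\pm\ell_1^{\alpha(t)}(\tau\alpha)$, $\alpha(0)=\alpha$, order by order.
\item By the formula for $\overline\ell_1$, this equation \emph{is} the $dt$-component of the Maurer--Cartan equation for $\alpha(t)+\tau\alpha\,dt$. Nothing is left to ``identify''.
\item The Maurer--Cartan property then reduces to $\mathcal C(\alpha(t))\equiv 0$. This is where your linear-ODE argument belongs: $\frac{d}{dt}\mathcal C(\alpha(t))=\pm\ell_2^{\alpha(t)}(\mathcal C(\alpha(t)),\tau\alpha)$ with $\mathcal C(\alpha)=0$.
\end{itemize}

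The correct reading also explains the construction. Only the $\partial_\tau$-term of $\ell_1$ produces $x^{D_j}$-components, so up to sign conventions $\alpha(t)=(1-t)\alpha+(\text{orbit terms})$. Hence $\beta=\mathrm{ev}_1\beta(t)$ is an honest symplectic cochain to which $\operatorname{CO}$ applies, consistent with Item~2 of Lemma~\ref{keylemma}.

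\textbf{The filtration bound is asserted, not proved.} The heuristic you offer for item~3 does not type-check. The condition $\beta(t)\in\mathfrak g^\Lambda_{\geq 1,t}$ constrains the $\mathfrak g$-coefficients, extended $\Lambda$-linearly; the paper later uses the strict range $\mathfrak g^\Lambda_{>1}$. So the Novikov variables $q_j$ cannot ``compensate'' anything in $\mathcal P$.

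What is needed is an estimate, inductive in $q$-order, on the output orbits of the rigid curves counted by $\ell_{d+1}(\alpha(t)^{\otimes d},\tau\alpha)$:
\begin{itemize}
\item combine the index formula with the energy inequality;
\item compare $[\omega]^{rel}=\sum_j\kappa\lambda_j\operatorname{PD}^{rel}(D_j)$ with $2c_1(M)=\sum_j\lambda_j\operatorname{PD}[D_j]$ to see that intersections with $D$ cancel in $\frac1\kappa\mathcal A-|\cdot|$;
\item extract the required gain from the weight and sprinkle terms through the constant $C$ of Theorem~\ref{P-filtrationResult}.
\end{itemize}

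This bound is not cosmetic. Your Maurer--Cartan step is only valid once you know that $\alpha(t)$ and $\gamma$ stay in the range where the relations you invoke are established. The strict range is also exactly what the paper needs later to push the homotopy through $\operatorname{CO}^D$, whose identities in Lemma~\ref{keylemma} hold only on $\mathfrak g_{>1}$.
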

    We set $\beta\equiv \ev_1(\beta(t))\in \operatorname{MC}(\mathfrak g^\Lambda)$.\\

\subsection{$A_\infty$-Structure}
\subsubsection{Algebraic Preliminaries}
Our notions are inline with \cite{fukaya2025unobstructed} and we follow closely the exposition found in \cite{perutz2022constructing}.
\begin{defn}[Curved filtered $\Lambda$-linear $A_\infty$-algebra]
		Given a $\mathbb G$-graded $\Lambda$-module $A$ together with an exhaustive, complete, decreasing filtration $\F_{\geq *}$. $A$ is said to be a filtered $\Lambda$-linear $A_\infty$-algebra, if there exists a sequence of $\Lambda$-linear maps $\{m_k\}_{k\geq 0}$ of $\deg m_k=2-k$, indexed by the non-negative integers,
		\begin{equation*}
			m_k:A^{\otimes k}\rightarrow A
		\end{equation*}
		such that:
		\begin{enumerate}
			\item $m_k$ satisfies the $A_\infty$-relations. That is,
			\begin{equation}
				\sum_{k_1+k_2=k+1}\sum_{i=1}^{k_1} (-1)^*m_{k_1}(a_1, \dots, a_{i-1}, m_{k_2}(a_i, \dots, a_{i+k_2}),a_{i+k_2+1}, \dots, a_{k_1} ), \forall k\geq 0,
			\end{equation}
            where $*\equiv |a_1|+\dots+|a_{k_1}|-k_1$.
			\item Has curvature. That is, $m_0(1)\in\F_{\geq 1}A$,
			\item Preserves the $\F$-filtration. That is, $m_k(\F_{\geq\lambda_1}A, \dots, \F_{\geq\lambda_k}A)\subseteq\F_{\geq\lambda_1+\dots+\lambda_k}A$.  
		\end{enumerate}
	\end{defn}
	In fact, given a filtered, curved $\Lambda$-linear $A_\infty$-algebra as above, we can associate to it a $(\mathbb G\oplus\mathbb Z)$-graded, uncurved, filtered $\Lambda$-linear $A_\infty$-algebra, denoted by $gr_* A$. Indeed, under the assumption that the decreasing filtration is exhaustive, complete and indexed by the integers. We define the $\textbf{full-associated-graded}$ by $gr_*A:=\bigoplus_{p\in\mathbb Z}\F_{\geq p}A/\F_{\geq p+1}A$. $gr_*A$ is bi-graded where the grading is given from the one on $A$ and a $\mathbb Z$-grading given by the filtration. Under the assumption that the $A_\infty$-operators on $A$ are filtration-preserving, we have an induced $A_\infty$-operators on $gr_*A$ given by
    \begin{equation*}
        gr_*(m_k):gr_{\lambda_1}A\otimes\dots\otimes gr_{\lambda_k}\rightarrow gr_{\lambda_1+\dots+\lambda_k}A.
    \end{equation*}
    As the curvature term $A$ satisfies $m_0(1)\in \F_{\geq1}A$, it follows that $(gr_*A, \{gr_*m_k\}_{k\geq0})$ is an uncurved $(\mathbb G\oplus\mathbb Z)$-graded $A_\infty$-algebra.

	We define the notion of filtered, curved $A_\infty$-categories as follows.
	
	\begin{defn}[Filtered curved $A_\infty$-category]
		A filtered curved $A_\infty$-category is a category $\mathcal{A}$ together with a sequence $\{m_k\}_{k\geq0}$ of $\textit{composition maps}$ such that:
		\begin{enumerate}
			\item For any two objects $L_0, L_1$ of $\mathcal{A}$, $\operatorname{Hom}(L_0, L_1)$ is a $\mathbb G$-graded, filtered $\Lambda$-module.
			\item For any countable collection $L_0, L_1, \dots, L_k, \dots $ in $\operatorname{Ob}(\mathcal{A})$ the composition maps
			\begin{equation}
				m_k:\operatorname{Hom}(L_0, L_1)\hat{\otimes}_\Lambda\dots\hat{\otimes}_\Lambda\operatorname{Hom}(L_{k-1}, L_k)\rightarrow\operatorname{Hom}(L_0, L_k)
			\end{equation}
			satisfies the $A_\infty$-relation as above and preserves the filtration.
			\item If in particular, $L_i=L$ for all $i\geq 0$ then, $(\operatorname{Hom}(L, L), \{m_k\}_{k\geq 0})$ is a curved filtered $A_\infty$-algebra. 
		\end{enumerate}
	\end{defn}

    We aim to define the notion of $\textit{quasi-equivalence}$ of curved, filtered $A_\infty$-category. We do so by explaining this notion first in the case of an uncurved $A_\infty$-category and with no filtration.\\
    $\textbf{Uncurved with no filtration case:}$\\
    Note that, for an uncurved $A_\infty$-algebra $(A, \{m_k\}_{k\geq0})$, $m_1\circ m_1=0$ and hence $(A, m_1)$ is a chain complex. Similary, given an uncurved $A_\infty$-category $\mathcal{A}$, we have an induced linear $\textit{cohomological}$ category given as follows.

    \begin{defn}[Cohomological Category]
        The cohomological category of $\mathcal{A}$ denoted by $H\mathcal A$ is a filtered linear category whose objects are the same as $\mathcal A$, morphisms are given by the $m_1$-cohomology. Namely given $a_0, a_1$ objects in $\mathcal A$ then $\hom^{H\mathcal A}(a_0,a_1):=H^*(\hom^{\mathcal{A}}(a_0, a_1), m_1)$ and compositions is given by $[a_0]\circ[a_1]:=(-1)^{|a_1|}[m_2(a_0, a_1)]$. 
    \end{defn}
   In general, the cohomological category of an uncurved $A_\infty$-category need not have identity morphisms. In the case that it does have an identity morphism for each object, we say $H\mathcal{A}$ is $\textbf{unital}$ and that $\mathcal{A}$ is $\textbf{cohomologically-unital}$.

   \begin{defn}[Quasi-Equivalence in the uncurved case]
       Given two unital, uncurved $A_\infty$-categories and a unital $A_\infty$-functor $F:\mathcal A\rightarrow\mathcal B$ between them. $F$ is said to be a quasi-equivalence if the induced functor $HF:H\mathcal A\rightarrow H\mathcal B$ at the cohomological level is a quasi-isomorphism. 
   \end{defn}
   
   Now assume that $\mathcal{A}$ is a filtered, curved $A_\infty$-category over a filtered ring $\Lambda$. Similary as in the algebra case, we denote by $gr_*\mathcal A$ its associated graded. The induced $A_\infty$-structure operators on $gr_*\mathcal A$ are of degree $(2-k, 0)$, making it an uncurved $(\mathbb G\oplus\mathbb Z)$-graded $A_\infty$-category. We say $\mathcal{A}$ is cohomologically unital if its the cohomological category $Hgr_*\mathcal A$ is unital.

\begin{defn}[Filtered, Curved $A_\infty$-functor]
    Given two filtered, curved cohomologically unital $A_\infty$-categories $\mathcal A,\mathcal B$ and functor $\mathcal{G}:\mathcal A\rightarrow\mathcal B$ between them. $\mathcal G$ is said to be a filtered, curved $A_\infty$-functor if
    \begin{enumerate}
        \item $\mathcal G$ is an $A_\infty$-functor,
        \item $\mathcal G$ is filtration-preserving. That is, for $c\in\F_{\geq\lambda}\hom^{\mathcal A}(a_0, a_1)$, we have $\mathcal{G}(c)\in\F_{\geq\lambda}\hom^{\mathcal B}(\mathcal{G}(a_0), \mathcal{G}(a_1))$,
        \item $\mathcal{G}$ is $\textit{curved}$ with $\mathcal{G}^0\in\F_{\geq 1}$,
        \item the induced uncurved $A_\infty$-functor $gr_*\mathcal G:gr_*\mathcal A\rightarrow gr_*\mathcal B$ is a cohomologically unital.
    \end{enumerate}
\end{defn}
\begin{rmk}
       All our $A_\infty$-categories or algebras will be $\textit{unital}$.
   \end{rmk}
\begin{defn}[Quasi-Equivalence in the uncurved case]\label{EquivalenceFunctorUncurved}
    Suppose that $\mathcal A,\mathcal B$ are two uncurved $(\mathbb G\oplus\mathbb Z)$-graded $A_\infty$-categories and $\mathcal{G}:\mathcal A\rightarrow\mathcal{B}$ is a $(\mathbb G\oplus\mathbb Z)$-graded $A_\infty$-functor between them. $\mathcal{G}$ is said to be a quasi-equivalence if the induced functor on the cohomological level $H\mathcal G:H\mathcal{A}\rightarrow H\mathcal B$ is an equivalence of linear categories.\\
    We say $\mathcal A, \mathcal B$ are quasi equivalent if there exists an uncurved $(\mathbb G\oplus\mathbb{Z})$-graded $A_\infty$-category $\mathcal{C}$ and two quasi-equivalences $\mathcal{C}\rightarrow\mathcal{A}$ and $\mathcal{C}\rightarrow\mathcal{B}$. In this case, we write $\mathcal{A\simeq\mathcal B}$. 
\end{defn}
\begin{defn}[Quasi-Equivalence]
    Suppose that $\mathcal A,\mathcal B$ are two filtered, curved $\mathbb G$-graded $A_\infty$-categories and $\mathcal G:\mathcal A\rightarrow\mathcal B$ is a filtered, curved $A_\infty$-functor between them. $\mathcal G$ is said to be a filtered quasi-equivalence if the induced functor $gr_*\mathcal{G}:gr_*\mathcal{A}\rightarrow gr_*\mathcal B$ is a quasi-equivalence as in Definition \ref{EquivalenceFunctorUncurved}. \\
 We say that $\mathcal{A},\mathcal B$ are filtered curved quasi-equivalences and we write $\mathcal{A\simeq\mathcal B}$ if they are connected by a zig-zag diagram of filtered, cuved quasi-equivalences. 
\end{defn}

\subsubsection{Geometric Realization}
We will be working in the same setting as above. Namely, $X=M\setminus D$ together with a choice of Liouville structure $(\theta, Z)$ and convex finite-type structure given by $\rho$ as in Theorem \ref{radialFunctionThm}. We also fix a grading datum $\mathbb G:=\{\mathbb Z\rightarrow H_1(\operatorname{Lag}(X))\rightarrow\mathbb Z/2\}$ and follow the sign and grading convetion as above, which are inline with \cite{borman2024l_}, \cite{perutz2022constructing}, \cite{sheridan2020versality}, and a $\textit{reference}$ $d\theta$-compatible almost complex structure $J_0$ on $X$. We recall the following definition.

\begin{defn}[Lagrangian Label]
    Let $L_0, \dots, L_k$ be a finite collection of Lagrangians in $X$ each equipped with a brane structure as in Definition \ref{LagBraneDefn}. A Lagrangian label is an ordered tuple $\mathbb L=(L_0, \dots, L_k)$ of such Lagrangian branes in $X$. \\
    Consider the domain moduli space $\R^{\text{disk}, al}_{d,k,p}$ with or without a $p$-flavour. For a Lagrangian label $\mathbb L$ as above, we denote by $\R^{\text{disk}, al}_{d,k,p}(\mathbb L)$ the space $\R^{\text{disk}, al}_{d, k,p}$ with the extra decoration given by cyclically labeling the connected components of the underlying curve of $r\in\R^{\text{disk}, al}_{d,k,p}$ minus its boundary marked points by the ordered elements of $\mathbb L$ accordingly. 
\end{defn}

\paragraph{Compact Fukaya Category of $X$:}
The objects of $\mathcal{F}(X)$ the compact Fukaya category of $X$, are closed, exact Lagrangian branes $L\subset \overline{X}$ where $\overline{X}\equiv\{\rho\leq 1\}$.\\
For $L_0, L_1$ two objects of $\mathcal F(X)$, we choose a $[0,1]_t$-dependent Floer datum $(H_{0,1}, J_{0,1})\in C^\infty([0,1]\times X,\mathbb R)\times C^\infty([0,1]; \mathcal J)$ such that:
\begin{enumerate}
    \item $H_{0,1}\equiv 0$ and $J_{0,1}\equiv J_0$ on a neighborhood of $D$ containing $\partial\overline{X}$,
    \item the image of $L_0$ under the time-$1$ flow of $X_{H_{0,1}}$ intersects $L_1$ transversely.  
\end{enumerate}
Using such Floer datum, we define $\hom^{\mathcal F(X)}(L_0, L_1):=\bigoplus_{y\in\mathcal X(L_0, L_1; H_{0,1})}|o_{y}|_\mathbb G$.\\

As for the $A_\infty$-structure on $\mathcal F(X)$, given a countable collection of objects $L_0,\dots,L_k,\dots$ of $\mathcal{F}(X)$, we define
\begin{equation*}
    m_k^{\text{exact}}:\hom^{\mathcal F(X)}(L_0, L_1)\otimes_\mathbb C\dots\otimes_\mathbb C\hom^{\mathcal F(X)}(L_{k-1}, L_{k})\rightarrow\hom^{\mathcal F(X)}(L_0, L_k)[2-k]
\end{equation*}
as follows:\\
Let $\mathbb L:=(L_0, \dots, L_k)$ be the corresponding Lagrangian label and fix a consistent universal choice of strip-like ends $\epsilon_0,\epsilon_1,\dots,\epsilon_k$ on $\R^{\text{disk}}_k$. For $i=0, \dots, k$, we choose a Floer datum $(H_{i,i+1}, J_{i,i+1})$ satisfying the same conditions as above for each consecutive pair $(L_i, L_{i+1})$ with the understanding that for $i=k+1$, we set $i=0$ and the extra requirement that there are no triple intersections. Using the results of \cite{seidel2008fukaya}, we can find a consistent universal choice of perturbation $(K, J):\R^{\text{disk}, univ}_k(\mathbb L)\rightarrow\Omega^1_{\R^{\text{disk}, univ}_k(\mathbb L)/\R^{\text{disk}}_k(\mathbb L)}(\R^{\text{disk}, univ}_k(\mathbb L), C^\infty(X))\times C^\infty(\R^{\text{disk}, univ}_k(\mathbb L), \mathcal J)$ such that for any $r\in\R^{\text{disk}, univ}_k(\mathbb L), K_r$ is supported in a neighborhood of $D$ containing $\partial\overline{X}$ and $J_r\equiv J_0$ on a such neighborhood satisfying
\begin{equation*}
    (\epsilon_i^*K, \epsilon_i^*J)=(H_{i,i+1}dt, J_0) \text{ for } |s|>>1.
\end{equation*}
Now let $y=(y_0,\dots, y_k)$ be an ordered tuple of Hamiltonian chords where $y_i\in\mathcal X(L_i, L_{i+1};H_{i,i+1})$ and for $r\in\R^{\text{disk}}_k(\mathbb L)$ of underlying marked disk $(\Sigma; z_0, \dots, z_k)$ we denote by $C_r:=\Sigma\setminus\{z_0,\dots,z_k\}$ and consider smooth maps $u:C_r\rightarrow M$ such that:
\begin{enumerate}
    \item $[u]=0\in H_2(M, X)$,
    \item for every $z\in\partial_iC_r$ in the $i^{th}$-connected component of $\partial C_r$ with respect to the anti-clockwise orientation, $u(z)\in L_i$,
    \item $(du-Y)^{0,1}=0$, where $Y\in\hom_\mathbb R(TC_r, TX)$ such that for any $v\in TC_r, X_{K(v)}\equiv Y(v)$ and the $(0,1)$-part is taken with respect to $J_r$.
    \item $\lim_{s\rightarrow-\infty}u(\epsilon_0(s,t))=y_0(t)$ and $\lim_{s\rightarrow\infty}u(\epsilon_i(s,t))=y_i(t)$ for $i=1, \dots, k$.
\end{enumerate}
\begin{rmk}
    We note that, in principle, to define the compact Fukaya category $\F(X)$, one has to consider pseudo-holomorphic curves $u:C_r\rightarrow X$. On the other hand, in the view of Lemma \ref{MaxPrinciple}, pseudo-holomorphic curves $u:C_r\rightarrow M$ such that $[u]=0\in H_2(M,X)$ are entirely contained in $X$.
\end{rmk}
Denote by $\mathcal M(\mathbb L):=\{(r, u)\}$ such that $r\in \R^{\text{disk}}_k(\mathbb L)$ and $u$ is a smooth map as above.  
\begin{prop}[\cite{seidel2008fukaya}]
    For a generic choice of consistent universal perturbation datum, $\mathcal{M}(\mathbb L)$ is an orientable finite-dimensional smooth manifold of $\dim_\mathbb R \mathcal M(\mathbb L)=|y_0|-\sum_{1\leq i\leq k}|y_i|+k-2$.\\
    In the case of $\dim_\mathbb R \mathcal M(\mathbb L)=0$, it is a compact manifold. In the case of $\dim_\mathbb R \mathcal M(\mathbb L)=1$, $\mathcal M(\mathbb L)$ has a compactification $\overline{\mathcal M}(\mathbb L)$ by an oriented topological compact manifold with boundary of the form $\mathcal{M}(\mathbb L_1)\times\mathcal{M}(\mathbb L_2)$ such that
    \begin{enumerate}
        \item $\mathbb L_1, \mathbb L_2\subseteq \mathbb L$,
        \item $\mathbb L_1 \cup \mathbb L_2=\mathbb L$,
        \item $\mathbb L_1 \cap \mathbb L_2= L_i$, that is they only have a single Lagrangian in common,
        \item $|\mathbb L_1|, |\mathbb L_2|\geq 2$,
        \item $\dim_\mathbb R \mathcal M(\mathbb L_1),\dim_\mathbb R \mathcal M(\mathbb L_2)=0$.
    \end{enumerate}
\end{prop}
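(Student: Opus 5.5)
This is the standard Seidel compactness and gluing argument for $\mathcal M(\mathbb L)$, and I would not redo it from scratch: I would verify that the hypotheses of \cite{seidel2008fukaya} hold in the present setting and then invoke it. Throughout, I restrict to maps with $[u]=0\in H_2(M,X)$.

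\textbf{Step 1 (confinement to $X$).} By the Remark above and Lemma \ref{MaxPrinciple}, every such solution stays inside $X$. The Floer data are supported away from a neighbourhood of $D$ containing $\partial\overline{X}$, and $J_r\equiv J_0$ there. So the maximum principle on the convex end keeps all curves, and all limits of sequences of curves, inside a fixed compact subset of $\overline{X}$. This puts us exactly in the setting of an exact Liouville domain.

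\textbf{Step 2 (transversality and dimension).} For a consistent universal perturbation datum, transversality is proved by Sard--Smale applied to the universal moduli space over $\R^{\text{disk}}_k(\mathbb L)$. The linearised operator is Fredholm because the chords $y_i$ are nondegenerate (this is the transverse-intersection hypothesis). Its index is computed by gluing the orientation operators $D_{y_i^\#}$ at the ends, which gives $|y_0|-\sum_{i\ge1}|y_i|$. Adding $\dim\R^{\text{disk}}_k=k-2$ gives the stated dimension. Orientability follows from the Pin structures, via the canonical isomorphism of determinant lines with $o_{y_0}\otimes\bigotimes_{i\ge1} o_{y_i}^\vee$ twisted by $\det T\R^{\text{disk}}_k$.

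\textbf{Step 3 (energy and Gromov compactness).} This is the key step. By exactness ($\omega=d\theta$ on $X$, and each $L_i$ is exact with primitive $f_i$), the geometric energy of $u$ is bounded by an action difference of $y_0,\dots,y_k$, up to a term controlled by the curvature of $K$. The curvature term is uniformly bounded on the compact region once the universal datum is fixed, and consistency makes it behave well near the boundary of $\overline{\R}^{\text{disk}}_k$. The uniform energy bound then gives Gromov compactness.

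\textbf{Step 4 (excluding bad limits).} Sphere bubbles and disk bubbles cannot occur, because $\omega$ is exact and the $L_i$ are exact: any such bubble would have zero energy and hence be constant. What remains is breaking along strips at the ends and degeneration of the domain to a boundary stratum $\R^{\text{disk}}_{k_1}\times\R^{\text{disk}}_{k_2}$. Both are captured by the product strata $\mathcal M(\mathbb L_1)\times\mathcal M(\mathbb L_2)$: two sub-labels sharing exactly one Lagrangian $L_i$, each with at least two entries, where a strip corresponds to a label of length $2$.

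\textbf{Step 5 (conclusion by dimension count).} By regularity and additivity of the index, a limiting configuration in a $0$-dimensional space would have a component of negative dimension. So $0$-dimensional spaces are already compact. In a $1$-dimensional space, every limit has exactly two components, each of dimension $0$. Gluing, using the consistent strip-like ends, shows that each such broken configuration is the end of exactly one component of $\mathcal M(\mathbb L)$. This makes $\overline{\mathcal M}(\mathbb L)$ a compact oriented topological $1$-manifold whose boundary is as claimed.

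The main obstacle is the uniform energy estimate in Step 3 near the boundary of the domain moduli space, together with Step 1. Both reduce to the maximum principle and consistency, and I would take them from \cite{seidel2008fukaya} verbatim.
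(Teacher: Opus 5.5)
Your proposal is correct and matches the paper, which gives no argument of its own and simply invokes \cite{seidel2008fukaya}. Your reduction (solutions with $[u]=0\in H_2(M,X)$ lie in $\overline{X}$ by Lemma~\ref{MaxPrinciple} and the Remark preceding the definition of $\mathcal M(\mathbb L)$, which puts you in Seidel's exact setting) is exactly what that citation tacitly relies on; the one thing to state explicitly is that Step~1 needs $J_0$, and hence $J_r$, to be of contact type near $\partial\overline{X}$, which Lemma~\ref{MaxPrinciple} also assumes.
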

Given the above Proposition and following the sign convention in \cite{seidel2008fukaya}, \cite{borman2024l_}; for a rigid curve $u\in\mathcal M(\mathbb L)$ asymptotic to $y$, we get a linear isomorphism of $\mathbb G$-graded line
\begin{equation*}
    \mu_u:|o_{y_1}|_\mathbb G\otimes\dots\otimes|o_{y_k}|_\mathbb G\rightarrow|o_{y_0}|_\mathbb G[2-k],
\end{equation*}
defining the structural constant of the $A_\infty$-operator $m_k^{\text{exact}}$. 
\begin{thm}[\cite{seidel2008fukaya}]
    The $A_\infty$-structure on $\mathcal F(X)$ is independent, up to quasi-equivalence of $A_\infty$-categories, of the choice of $J_0$, strip-like ends, Floer datum and perturbation datum. 
\end{thm}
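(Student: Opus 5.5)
The plan is Seidel's ``doubling'' argument, as in Section~10a of \cite{seidel2008fukaya}. We compare two complete choices of auxiliary data, $\mathcal D^0$ and $\mathcal D^1$. Each consists of a reference structure $J_0$, a universal choice of strip-like ends, Floer data $(H_{i,i+1},J_{i,i+1})$ and perturbation data $(K,J)$. We build a single auxiliary category $\mathcal F^{01}(X)$ containing both $\mathcal F^0(X)$ and $\mathcal F^1(X)$ as full subcategories. We then show that each inclusion is a quasi-equivalence. This gives the required zig-zag $\mathcal F^0(X)\hookrightarrow\mathcal F^{01}(X)\hookleftarrow\mathcal F^1(X)$.

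\textbf{Step 1: the doubled category.} The objects of $\mathcal F^{01}(X)$ are pairs $(L,\sigma)$, where $L$ is a closed exact brane in $\overline X$ and $\sigma\in\{0,1\}$.
\begin{enumerate}
\item For labels $(L_0,\sigma_0),\dots,(L_k,\sigma_k)$ with all $\sigma_i=0$ (respectively all $\sigma_i=1$), we use exactly the data of $\mathcal D^0$ (respectively $\mathcal D^1$).
\item For mixed labels, we choose new Floer data and perturbation data. We do this inductively over $k$ and over the strata of $\overline{\R}^{\text{disk}}_k$, exactly as in the construction recalled above.
\item At each stage the data must be consistent with gluing and restrict to the already fixed data on boundary strata.
\end{enumerate}
The existence of such extensions, and genericity relative to the prescribed boundary values, follows from Seidel's inductive argument. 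The point is that the space of admissible data is contractible, and transversality can be achieved by perturbing only away from the already fixed strata. By construction, the full subcategory on objects with $\sigma=0$ is literally $\mathcal F^0(X)$, and similarly for $\sigma=1$. Hence the inclusions are strict $A_\infty$-functors.

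\textbf{Step 2: the copies are quasi-isomorphic.} It suffices to show that $(L,0)$ and $(L,1)$ are quasi-isomorphic in $H\mathcal F^{01}(X)$; then each inclusion is cohomologically full, faithful and essentially surjective. We identify $HF((L,0),(L,1))\cong H^*(L)$ via the standard continuation isomorphism, since both sides compute Floer cohomology of $L$ with itself. Let $e_{01}$ and $e_{10}$ be the classes corresponding to $1\in H^*(L)$. A gluing and homotopy argument on the moduli spaces of triangles (disks with three boundary punctures) shows that $[m_2(e_{01},e_{10})]$ and $[m_2(e_{10},e_{01})]$ are the cohomological units. Cohomological unitality of $\mathcal F^{01}(X)$ is itself proven as in \cite{seidel2008fukaya}, by degenerating to constant-strip configurations.

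\textbf{Main obstacle: compactness for interpolating data.} All the above moduli spaces must be compact. This matters in particular when $J_0$ differs between $\mathcal D^0$ and $\mathcal D^1$, so that the interpolating almost complex structures vary near $D\cup\partial\overline X$. The plan is as follows.
\begin{enumerate}
\item Restrict all interpolating data to be of contact type on a contact shell of $\partial\overline X$ (Definition \ref{ContactShellDefn}), with Hamiltonian perturbations vanishing there.
\item Then the maximum principle (Lemma \ref{MaxPrinciple}) confines every solution to $\overline X$, so the varying of $J_0$ near $D$ never matters.
\item Exactness of $X$ and of the branes gives an a priori energy bound in terms of action differences of the asymptotic chords.
\item Exactness also rules out disk and sphere bubbling.
\end{enumerate}
Gromov compactness then yields the required boundary description of the $1$-dimensional moduli spaces. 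Hence the $A_\infty$-relations hold in $\mathcal F^{01}(X)$, which completes the argument.
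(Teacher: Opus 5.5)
The paper gives no argument of its own for this statement and simply defers to \cite{seidel2008fukaya}. Your proposal is a correct outline of Seidel's doubling argument there: objects $(L,\sigma)$ with $\sigma\in\{0,1\}$, auxiliary data extended relative to the two pure labelings, and $(L,0)\cong(L,1)$ via continuation elements. So it follows essentially the same route, though you should make explicit that for mixed labels the strip-like ends must also be chosen anew (allowed to depend on the labels), that the cospan $\mathcal F^0(X)\hookrightarrow\mathcal F^{01}(X)\hookleftarrow\mathcal F^1(X)$ becomes a direct quasi-equivalence via quasi-inverses for cohomologically unital categories over a field, and that confinement to $\overline X$ comes from the integrated maximum principle on the contact shell (Lemma~\ref{MaxPrinciple} as stated only confines curves to $X$).
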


\paragraph{Relative Fukaya Category of $(M, D)$:}

Following \cite{seidel2002fukaya} we define $\mathcal{F}(M,D)$ to be the $A_\infty$-category whose objects are closed exact Lagrangian branes $L\subset \overline X$. That is, the same objects as $\F(X)$.\\

 To define the morphism spaces, we consider our Novikov ring $\Lambda$ as defined above. Now for two objects $L_0$ and $L_1$ in $\F(M, D)$, we choose a time-dependent Floer datum $(H_{0,1}, J_{0,1})\in C^\infty([0,1], \mathcal{H}\times\mathcal{J})$ such that:
        \begin{enumerate}
            \item $X_{H_{0,1}}$ preserves $D_j$ for all $j=1, \dots, N$. That is, $H_{0,1}$ is as in Definition \ref{AdapHamDefn}.
            \item The flow of time-$1$ of $X_{H_{0,1}}$ makes $L_0$ and $L_1$ intersect transversely. 
            \item $J_{0,1}$ is adapted to $D_j$ for all $j=1, \dots, N$. That is, $J_{0,1}$ is as in Definition \ref{AdapJdefn}. In particular, each $D_j$ is an almost complex submanifold of $(M,J_{0,1})$. 
        \end{enumerate}
        \begin{defn}
            Under the assumption that $H_{0,1}$ is non-degenerate, we define the $\hom$-space of $L_0$ and $L_1$ in the relative Fukaya category $\F(M, D)$ to be $\hom^{\F(M,D)}(L_0, L_1):=(\bigoplus_{y\in\mathcal{X}(L_0, L_1;H_{0,1})}|o_y|_\mathbb G)\hat{\otimes}_\mathbb C\Lambda$. 
        \end{defn}
        In order to define the $A_\infty$-structure on $\F(M,D)$ as per \cite{perutz2022constructing}, \cite{sheridan2020versality}, we impose the following extra assumption. 
\begin{assumption}\label{Assumption}
    \begin{enumerate}
        \item The $\mathbb Q$-span of $\operatorname{PD}^{rel}[D_j]$ is $H^2(M,X;\mathbb Q)$.
        \item For every $j=1, \dots, N$ the set of $i=1,\dots, N$ such that $\operatorname{PD}[D_i]=\operatorname{PD}[D_j]$ is at least $n+1$ where $2n=\dim_\mathbb R M$. 
    \end{enumerate}
\end{assumption}
    Now let $\mathbb L=(L_0,\dots, L_k)$ be a Lagrangian label and choose Floer datum $(H_{i,i+1}, J_{i,i+1})$ adpated to $D$ with the extra condition that $J_{i, i+1}$ is of contact-type on the neck-region $\{\epsilon_-\leq\rho\leq\epsilon_+\}$. We also require that after taking images by the corresponding Hamiltonian flows, we do not have triple intersections. Let $y=(y_0, y_1, \dots, y_k)$ be a tuple of Hamiltonian chords and for $r\in\R^{\text{disk}}_{d,k}$ given by $r=[\Sigma; z_0,\dots,z_k;\zeta_1, \dots, \zeta_d]$, we denote by $C_r:=\Sigma\setminus\{z_0, \dots,z_k\}$. Fix a consistent universal choice of stip-like ends $\epsilon_0, \epsilon_1,\dots, \epsilon_k$ and a consistent universal choice of perturbations $(K, J)$ such that for every $i=0,\dots, k$, $(\epsilon_i^*K, \epsilon_i^*J)=(H_{i, i+1}, J_{i,i+1})$ for $|s|>>1$. For $m\geq0$ integer, we consider smooth maps $u:C_r\rightarrow M$ satisfying:
    \begin{enumerate}
        \item $(du-Y)^{0,1}=0$, 
        \item for every $z\in\partial_i C_r$, $u(z)\in L_i$,
        \item $\lim_{s\rightarrow-\infty}u(\epsilon_0(s,t))=y_0(t)$ and $\lim_{s\rightarrow\infty}u(\epsilon_i(s,t))=y_i(t)$ for $i=1, \dots, k$.
        \item $\int_{C_r}|du-Y|^2<\infty$,
        \item $[u].D=m$ and $u^{-1}(D)=\{\zeta_1, \dots, \zeta_d\}$. 
    \end{enumerate}

We denote by $\mathcal{M}_{d, k,m}(y):=\{(r,u)\}$ the moduli space of all pairs such that $r\in\R^{\text{disk}}_{d,k}(\mathbb L)$ and $u$ is a smooth map as above.  

\begin{lemma}[Integrand Maximum Principle \cite{perutz2022constructing}]\label{MaxPrinciple}
			Suppose that $u:(\Sigma, \partial\Sigma)\rightarrow M$ is a $J$-holomorphic map such that $u(\partial\Sigma)\subseteq X$. Furthermore, suppose that $[u]=0\in H_2(M, X; \mathbb{Z})$. Then, $\operatorname{im}(u)\subseteq X$. 
		\end{lemma}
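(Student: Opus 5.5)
The plan is a topological argument on intersection numbers with $D$, followed by positivity of intersections for the irreducible pieces. We argue by contradiction: suppose some $z\in\Sigma$ has $u(z)\in D$. Since $u(\partial\Sigma)\subseteq X$, the map $u$ defines a relative class in $H_2(M,X;\mathbb Z)$. Under the Poincar\'e--Lefschetz identification $H_2(M,X)\cong\mathbb Z^N$, $a\mapsto (a\cdot D_j)_j$, recalled in \S2.1, the hypothesis $[u]=0$ says $[u]\cdot D_j=0$ for every $j=1,\dots,N$. The goal is to show that $u^{-1}(D_j)\neq\emptyset$ forces $[u]\cdot D_j>0$.

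First dispose of the degenerate case. If some component of $u$ maps entirely into $D_j$, its boundary would lie in $D_j\cap X=\emptyset$. For a component meeting $\partial\Sigma$ this is impossible. A closed (sphere) component lying in $D$ can be handled by the same argument applied to the other components. Alternatively, one notes that the relevant moduli spaces carry no sphere bubbles, by Theorem \ref{CompactnessResult}.

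So assume $u$ is not contained in any $D_j$. Since $J$ is adapted (Definition \ref{AdapJdefn}), each $D_j$ is a $J$-almost complex submanifold. The Hamiltonian term is handled by Lemma \ref{graphsubmanifold} and the graph trick: the lift $\tilde u(z)=(z,u(z))$ is genuinely $J_\nu$-holomorphic in $\Sigma\times M$, and $\Sigma\times D_j$ is a $J_\nu$-almost complex submanifold of codimension two. Positivity of intersections for pseudo-holomorphic curves and almost complex codimension-two submanifolds, in the McDuff--Micallef--White form, then gives two facts. First, $\tilde u^{-1}(\Sigma\times D_j)$ is a discrete set in the interior of $\Sigma$. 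Second, each such point contributes a strictly positive local intersection multiplicity. Because $u(\partial\Sigma)\subseteq X$, these points are isolated away from the boundary, so the relative intersection number is well defined and equals the sum of the local multiplicities. Hence $[u]\cdot D_j\ge \#u^{-1}(D_j)>0$ whenever $u$ hits $D_j$.

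This contradicts $[u]\cdot D_j=0$, so $\operatorname{im}(u)\subseteq X$. If $\Sigma$ has punctures with strip-like ends, the asymptotic chords lie in $X$, and the intersection count is still finite and positive by the same local argument on a compact exhaustion.

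The main obstacle is the positivity-of-intersections step with a domain-dependent, Hamiltonian-perturbed $J$. The point to check is that $J_\nu$ preserves $T(\Sigma\times D_j)$. Lemma \ref{graphsubmanifold} supplies exactly this, using that $X_H$ is tangent to $D_j$. At crossings $D_i\cap D_j$ no extra work is needed, since each $D_j$ is treated separately.
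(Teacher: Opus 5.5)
Your argument is correct, but it is not the paper's route. The paper uses an integrated maximum principle. On the part $S$ of the domain mapped into the region $\{f\ge c\}$ near $D$, the hypothesis $[u]=0\in H_2(M,X)$ and Stokes' theorem identify the $\omega$-energy of $u|_S$ with $\int_{\partial S}u^*\theta$; implicitly this uses the relative representative $(\omega,-\theta)$ of $\sum_j\kappa_j\operatorname{PD}^{rel}(D_j)$. Because $J$ is of contact type, the integrand is $-d(f\circ u)(jv)\le 0$ along $\partial S$, so $u$ is constant on $S$.

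You instead apply positivity of intersections to the graph $\tilde u$ and the codimension-two $J_\nu$-almost complex submanifolds $\Sigma\times D_j$ (Lemma~\ref{graphsubmanifold}). Then any point of $u^{-1}(D_j)$ forces $[u]\cdot D_j\ge 1$. This works, but it rests on a different hypothesis on $J$. You need $J$ to preserve each $TD_j$, and the Hamiltonian term to be tangent to $D_j$. The paper needs $J$ of contact type near $\partial\overline X$ and never uses adaptedness.

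Your route has real merits. It is local and uses no cohomological input. It is also the same positivity the paper already invokes in Lemma~\ref{PositivityIntersection} to exclude $[u]\cdot D_j<0$, so with it Lemma~\ref{MaxPrinciple} becomes a corollary of that input. The paper's route buys two things. It does not depend on adaptedness, which matters for a reference $J_0$ specified only on $X$, as in $\F(X)$. It also gives a stronger conclusion: $u$ stays in the sublevel set $\{f\le c\}$, not merely off $D$.

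On references: in dimension above four, the positivity statement you need comes from a Carleman-similarity argument on the normal component, as in Cieliebak--Mohnke. Micallef--White concerns pairs of curves, so it is not the right citation.

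Drop your degenerate-case paragraph rather than rely on it. For this statement $\Sigma$ is connected with $\partial\Sigma\neq\emptyset$; if $\partial\Sigma$ were empty, a constant map into $D$ would be a counterexample. So $u(\Sigma)\not\subset D_j$ is immediate.

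Your claim that a sphere component lying in $D$ ``can be handled by the same argument applied to the other components'' is false as stated. Such a sphere can have negative intersection with $D_j$ and cancel a positive contribution from a disk component. Then $[u]\cdot D_j=0$ does not give vanishing component by component. Ruling this out needs an energy input, such as $\omega(v)=\sum_i\kappa_i\,v\cdot D_i>0$ for nonconstant spheres. Theorem~\ref{CompactnessResult} is not relevant to this lemma.
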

		\begin{proof}
			By McLean's Theorem \ref{McLeanStru}, there exists an $f:X\rightarrow\mathbb R$ a smooth exhausting function and a $c>0$ such that $f^{-1}([c, \infty))=X$. Moreover and following the construction in the above Section, there exists a $Z$ a Liouville $1$-form $\theta$ on $X$. WLOG we assume that $u(\Sigma)$ intersects transversely $f^{-1}(\{c\})$. Otherwise, we consider $c_k>c$ a sequence converging to $c$ such that $u(\Sigma)$ intersects $f^{-1}(\{c_k\})$ transversely and argue as follows.\\
            Let $v\in T\partial\Sigma$ be positively-oriented vectorfield and hence, $j(v)$ points inward to $\partial\Sigma$. As $J$ is of contact-type, it follows that, $\theta(du(v))=df\circ J(du(v))$. As $u$ is $J$-holomorphic, we have $df\circ J(du(v))=-df\circ du(j(v))\leq 0$. Under the assumption that $[u]=0\in H_2(M,X;\mathbb Z)$, $u(\partial\Sigma)\subseteq X$ and using Stoke's Theorem, it follows that the energy of the curve lying in $M\setminus X$ is $\theta(du(v))\leq 0$. As $J$ is $\omega$-compatible, it follows that the energy of the curve lying in $M\setminus X$ is zero and hence $u$ is constant on $M\setminus X$. Under the assumption that $u(\Sigma)$ intersects $f^{-1}(\{c\})$ transversely, it follows that $u(\Sigma)\cap (M\setminus X)=\emptyset$ and hence the desired result. 
		\end{proof}
		
		\begin{lemma}[Positivity of Intersection \cite{perutz2022constructing}]\label{PositivityIntersection}
			Suppose that $L\subset X$ is a closed Lagrangian manifold, which is monotone in $M$, and $u:(\Sigma, \partial\Sigma)\rightarrow(M, L)$ is a non-constant pseudo-holomorphic curve. Then, the algebraic intersection $[u].D>0$ and hence the image of $u$ intersects at least one of the components of $D$. 
		\end{lemma}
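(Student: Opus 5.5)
The plan is to split the statement into a topological input and a homological one. Let $u$ be the given non-constant pseudo-holomorphic disk with boundary on $L$. The topological input is that if $[u].D\le 0$, then $u$ cannot meet $D$ at all. The homological input is that if $u$ misses $D$, then its symplectic area vanishes, which forces $u$ to be constant.

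\emph{Step 1: reduce to $[u].D=0$ implying $u(\Sigma)\cap D=\emptyset$.} Use an adapted $J$ (Definition \ref{AdapJdefn}), so that each $D_j$ is a $J$-almost complex submanifold; with Hamiltonian terms one first passes to the graph via Lemma \ref{graphsubmanifold}. Since $\partial u\subset L\subset X$, the class $[u]\in H_2(M,X)$ is well defined, and $[u].D_j$ counts the isolated interior intersection points of $u$ with $D_j$. Positivity of intersections for $J$-holomorphic curves against $J$-complex codimension-two submanifolds gives the following.
\begin{itemize}
\item Each isolated intersection point contributes a positive local multiplicity.
\item If $u(\Sigma)\subset D_j$, we replace $D_j$ by its normal bundle contribution. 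Alternatively, we observe that this cannot happen, because $u(\partial\Sigma)\subset X$ is disjoint from $D$.
\end{itemize}
Hence every $[u].D_j\ge 0$, and $[u].D=\sum_j [u].D_j=0$ forces $u^{-1}(D)=\emptyset$.

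\emph{Step 2: the area of a disk in $X$ vanishes.} Suppose $u$ misses $D$, so $u:(\Sigma,\partial\Sigma)\to (X,L)$. By Stokes, $E(u)=\int u^*\omega=\int_{\partial\Sigma}u^*\theta$. Here we need $\theta|_L$ exact. The closed exact Lagrangian branes used as objects satisfy this, so $\int_{\partial\Sigma}u^*df=0$. If only monotonicity of $L$ in $M$ is assumed, we argue instead with the relative class. The area of $u$ is $\sum_j\kappa_j\,[u].D_j$ plus the boundary term $-\int_{\partial}u^*\theta$, and monotonicity of $L$ identifies this boundary term with a multiple of the Maslov index, which is itself computed by $2c_1=\sum\lambda_j\operatorname{PD}[D_j]$ relative to $D$. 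Either way, $[u].D=0$ yields $E(u)=0$. Since $J$ is $\omega$-compatible, zero energy forces $du\equiv 0$ (with Hamiltonian terms, the energy identity bounds the energy by the curvature term, which vanishes on $X_\epsilon$ by the choice in Theorem \ref{L_inftyPerturbationData}). This contradicts non-constancy. Therefore $[u].D\neq 0$, and by Step 1 $[u].D>0$, so $u$ meets some $D_j$.

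\emph{Main obstacle.} The expected difficulty is Step 2 in the merely monotone case: making the identification of the boundary term $\int u^*\theta$ with the Maslov and intersection data precise. The approach is to use the lift $[\omega]^{\mathrm{rel}}=\sum\kappa_j\operatorname{PD}^{\mathrm{rel}}(D_j)$ together with exactness of $L$ in $(X,\theta)$, as holds for the objects considered. Adjusting $\theta$ by an exact form as in the McLean construction (Theorem \ref{McLeanStru}) makes $\theta|_L$ exact, so the boundary term drops.
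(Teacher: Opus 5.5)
Your proof is correct for the unperturbed pseudo-holomorphic disks in the statement. It reaches the key intermediate conclusion, that $u$ misses $D$, by a different route from the paper.

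The paper uses positivity of intersections only to get $[u].D_j\ge 0$ for each $j$. From $[u].D_j=0$ for all $j$ it deduces $[u]=0\in H_2(M,X;\mathbb Z)$, using the Poincar\'e--Lefschetz identification $H_2(M,X)\cong\mathbb Z^N$. It then invokes the integrated maximum principle (Lemma \ref{MaxPrinciple}, which needs $J$ of contact type) to get $\operatorname{im}(u)\subseteq X$. It finishes with exactness of $L$ and Stokes, exactly as you do.

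You instead use the local form of positivity of intersections. Since $u(\partial\Sigma)\subset L\subset X$, $u$ is not contained in any $D_j$. Hence its intersections with each $J$-complex $D_j$ are isolated with positive multiplicity, and a vanishing total count forces $u^{-1}(D)=\emptyset$ directly. This is shorter and needs neither the homological identification nor the contact-type hypothesis. The paper's route instead ties the lemma to the statement that any curve with $[u]=0\in H_2(M,X)$ lies in $X$, which it reuses elsewhere.

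Two side remarks in your proposal should be dropped or repaired:
\begin{itemize}
\item \textbf{Hamiltonian aside.} The parenthetical claim that the curvature of the Hamiltonian perturbation vanishes on $X_\epsilon$ is not what Theorem \ref{L_inftyPerturbationData} provides: there $K_r$ vanishes only on the neck region. Moreover, a Hamiltonian-perturbed solution inside $X$ need not be constant in general. Since the lemma concerns unperturbed curves, you can simply omit this.
\item \textbf{``Merely monotone'' branch.} This branch silently needs $L$ to be graded, i.e.\ to have vanishing Maslov class relative to $X$, in order to conclude $E(u)=0$. Your main argument, like the paper's, just uses that the objects are exact in $X$, and that suffices.
\end{itemize}
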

		\begin{proof}
			The key point is that $H^2(M, X; \mathbb{R})$ is generated by $\operatorname{PD}(D_i)$ which follows from the Poincare'-Lefschetz duality and the assumption on $D_\lambda$. Now suppose for a contradiction that $[u].D_j=0$ for all $j=1,\dots, N$, as the case of $[u].D_j<0$ is ruled out by the fact that $J$ preserves $TD_j$. Then, $[u]=0\in H_2(M, X;\mathbb{Z})$. By Lemma \ref{MaxPrinciple} the $\textit{Integrand maximum principle}$, it follows that $\operatorname{im}(u)\subseteq X$. As $L\subset X$ is necessarily an exact Lagrangian, it follows by Stoke's Theorem that $u$ is a constant map, which is absurd. 
		\end{proof}
The key technical difficulty in such setting is that sphere bubbles with tangency conditions on $D$ are generally a codimension $1$ strata. To this end, we adhere to Sections $5.5, 5.7, 5.8$ of \cite{perutz2022constructing} for our generic choice of perturbation data.

\begin{thm}[\cite{perutz2022constructing}]
    There exists a generic consistent universal choice of perturbation datum so that, $\mathcal{M}_{d, k, d}(y)$ has the structure of an oriented finite-dimensional smooth manifold.\\
    In the case when $\dim_\mathbb R \mathcal{M}_{d, k,d}(y)=0$, then it is a compact manifold.\\
    In the case when $\dim_\mathbb R\mathcal{M}_{d, k,d}(y)=1$, $\mathcal{M}_{d, k,d}(y)$ has a compactification by smooth manifolds of the form $\mathcal{M}_{d_1,k_1,m_1}(y_1)\times\mathcal{M}_{d_2,k_2,m_2}(y_2)$ such that
    \begin{enumerate}
    \item $d_1, d_2\geq 0$ are integers such that $d_1+d_2=d$,
    \item $k_1, k_2\geq0$ are integers such that $k_1+k_2=k+1$,
    \item $m_1, m_2\geq 0$ are integer such that $m_1+m_2=m$ and $m_i\equiv d_i$,
        \item $\dim_\mathbb R \mathcal{M}_{d_i,k_i,m_i}(y_i)=0$ for $i=1,2$.
    \end{enumerate}
\end{thm}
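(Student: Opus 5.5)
The statement is the Perutz--Sheridan transversality and compactness theorem for the relative moduli spaces $\mathcal{M}_{d,k,d}(y)$. I would prove it with the standard stabilising-divisor scheme of \cite{perutz2022constructing}, organised in three steps.

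\textbf{Step 1: setting up the perturbation data.} The interior marked points $\zeta_1,\dots,\zeta_d$ are required to be exactly the preimages of $D$, and $[u].D=d$. By positivity of intersection (Lemma \ref{PositivityIntersection}, using that $J$ is adapted), every intersection with $D$ then has multiplicity one. So $\mathcal{M}_{d,k,d}(y)$ is cut out transversely as the space of maps from the universal curve over $\R^{\text{disk}}_{d,k}(\mathbb L)$ with codimension-two incidence conditions $u(\zeta_i)\in D$.

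I would choose $(K,J)$ inductively over the strata of the Fulton--MacPherson type compactification $\overline{\R}^{\text{disk}}_{d,k}$, compatibly with gluing and with the boundary conditions already fixed. The data are allowed to depend on the positions of the $\zeta_i$, which is what makes them domain-dependent. Symmetry under permuting the $\zeta_i$ is not needed, since they are ordered and we divide by $d!$ afterwards.

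\textbf{Step 2: transversality for the main stratum.} For somewhere-injective configurations, transversality comes from the Sard--Smale argument on the universal moduli space. Domain-dependence of $J$ away from the strip-like ends ensures the universal linearised operator, including the evaluation maps at the $\zeta_i$ into $D$, is surjective. This produces an oriented smooth manifold of the expected dimension. Orientations come from the orientation lines $o_{y_i}$ and from the complex orientation of the normal bundle of $D$.

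\textbf{Step 3: compactness and the boundary of one-dimensional spaces.} I would apply Gromov compactness. Energy is bounded because the energy equals $\kappa\, d$ plus a topological term coming from the actions of $y$. Possible limits are broken strips, disc bubbles and sphere bubbles.

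Disc bubbling at the boundary away from the ends produces the $A_\infty$-type boundary strata $\mathcal{M}_{d_1,k_1,m_1}\times\mathcal{M}_{d_2,k_2,m_2}$. The interior marked points distribute so that $d_1+d_2=d$ and each $m_i=d_i$, again by positivity of intersections. Strip breaking gives the $k_i$ with $k_1+k_2=k+1$.

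The main obstacle is sphere bubbles. A $J$-holomorphic sphere $v$ meets $D$ positively with $[v].D\ge 1$, and by monotonicity it has $c_1\ge 1$. There are two cases.

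\emph{Spheres contained in $D$, or tangent to $D$.} Here I would use the second part of Assumption \ref{Assumption}, that there are at least $n+1$ components of $D$ in each cohomology class. Following Sections 5.5, 5.7 and 5.8 of \cite{perutz2022constructing}, the data are chosen so that the $J$ on sphere components is generic relative to the divisor. The many-components trick then shows that a sphere absorbing several marked points (a "ghost" configuration) would have to meet $n+1$ divisors in a way that drops the expected dimension by at least two. Consequently, sphere bubbles with tangency to $D$ occur in codimension at least two rather than one.

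\emph{Spheres attached at interior marked points.} Here the limit is described by the Fulton--MacPherson bubble strata $\R^{al}_{d_v}$. These are arranged to have zero Hamiltonian perturbation and a constant-on-spheres $J$. Their contributions either cancel in pairs through the $S^1$-framing, or have codimension at least two because of the dimension count $c_1(v)\ge 1$ together with the incidence constraints.

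Combining Steps 2 and 3, the zero-dimensional spaces are compact and the one-dimensional spaces compactify to manifolds with exactly the stated boundary.
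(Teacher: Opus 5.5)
\textbf{There is a genuine gap, in the sphere-bubble analysis.} The paper does not prove this statement itself. It quotes it from Perutz--Sheridan, noting just beforehand that the whole difficulty is that sphere bubbles with tangency to $D$ are in general of codimension one, and deferring to Sections 5.5, 5.7 and 5.8 there.

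Your routine steps are fine:
\begin{itemize}
\item multiplicity one from $[u].D=d$ and $u^{-1}(D)=\{\zeta_i\}$;
\item regularity of the main stratum with domain-dependent adapted data;
\item the disc-breaking strata.
\end{itemize}
The sphere-bubble analysis is the actual content of the theorem. You also outsource it, but the reasons you give for it do not work.

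\emph{Spheres inside $D$.} Positivity of intersections does not give $[v].D\ge1$ for a sphere $v\subset D_j$. In that case $v\cdot D_j$ is the degree of $v^*\nu D_j$, which could a priori be negative. Then the normal part of the linearised operator has negative index. Since $J$ must stay in $\mathcal J(M,D)$ and preserve $TD$, you cannot make it ``generic relative to the divisor'' to repair this. This is where Assumption~\ref{Assumption}(2) enters, and not through ghost spheres; a ghost is constant and meets nothing. The argument runs as follows.
\begin{itemize}
\item Any $n+1$ transverse components have empty common intersection. So a non-constant $v$ is not contained in every component homologous to a given $D_j$.
\item Computing against a homologous $D_i$ that does not contain $v$ gives $v\cdot D_j=v\cdot D_i\ge 0$ for every $j$.
\item Since $\omega(v)>0$ and $[\omega]$ is a non-negative combination of the $\operatorname{PD}[D_j]$, $v$ meets some component not containing it in isolated points of positive multiplicity.
\end{itemize}
These non-negative normal degrees and stabilising intersection points are what make spheres lying in $D$ controllable by domain-dependent adapted perturbation data. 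Your proposal never produces them.

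\emph{Bubbling at interior marked points.} Your treatment here does not work as stated. Nothing ``cancels in pairs through the $S^1$-framing''. In the Fulton--MacPherson type compactification of Section~\ref{treeChapter}, the collision strata are $S^1$-bundles over the corresponding stable-map strata. They are therefore empty whenever those strata are, and there is nothing to cancel.

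Nor is $c_1(v)\ge 1$ the reason for codimension two.
\begin{itemize}
\item For a ghost bubble, $c_1=0$. The correct argument is that $[u].D$ is preserved in the limit, so the principal disc component meets $D$ at the node with multiplicity $\ge 2$ (or at a crossing). That is a codimension-two tangency condition. Its regularity for adapted $J$ needs a Cieliebak--Mohnke type transversality result for tangency constraints.
\item For a non-constant sphere not contained in $D$, codimension two is the usual count once the sphere is regular with its own $D$-intersection points marked. This holds independently of $c_1$.
\end{itemize}

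As written, you assert that sphere bubbles with tangency to $D$ have codimension $\ge 2$, which is exactly what the paper flags as failing in general, without supplying the mechanism that makes it true. Stripped of the incorrect heuristics, your argument reduces to the same citation of Perutz--Sheridan that the paper relies on.
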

Now following our grading convention which is inline with \cite{perutz2022constructing}, \cite{sheridan2020versality}, the signed counts of rigid curves $u$ provides us with a linear isomorphism
\begin{equation*}
    \mu_u:|o_{y_1}|_\mathbb G\otimes\dots\otimes|o_{y_k}|_\mathbb G\rightarrow|o_{y_0}|_\mathbb G[2-k]
\end{equation*}
defining a $\Lambda$-linear map
\begin{equation*}
    m_k^{\text{rel}}:\hom^{\F(M,D)}(L_0,L_1)\hat{\otimes}_\Lambda\dots\hat{\otimes}_\Lambda\hom^{\F(M,D)}(L_{k-1}, L_k)\rightarrow\hom^{\F(M,D)}(L_0, L_k)
\end{equation*}
given by $m_k^{\text{rel}}:=\sum\mu_uq^{[u].[D]}$ where the sum is taken over all rigid curves $u$, which is convergent in the $q$-adic topology.

\begin{thm}[\cite{perutz2022constructing}, \cite{sheridan2020versality}]
    The operations $\{m_k^{\text{rel}}\}_{k\geq0}$ defines an $A_\infty$-structure on $\F(M,D)$ which is independent, up to filtered curved quasi-equivalence of $A_\infty$-categories, of the choices of Hamiltonian perturbations and stip-like ends. 
\end{thm}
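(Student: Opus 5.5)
The plan has two parts: derive the $A_\infty$-relations from the boundary of the one-dimensional moduli spaces, then get independence of choices by a continuation argument whose associated graded reduces to the exact case.

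\emph{Well-definedness.} I will take the perturbation data of \cite{perutz2022constructing} furnished by the preceding theorem as given. Only the components $\mathcal{M}_{d,k,d}(y)$ contribute, i.e.\ those whose $d$ interior marked points are exactly the transverse intersection points with $D$. Each rigid curve then carries $d!$ orderings of its marked points, so I will either divide by $d!$ or build $\operatorname{Sym}(d)$-equivariance into the data, so that $\mu_u q^{[u].D}$ counts unordered configurations. For $q$-adic convergence it suffices that, for fixed $y$ and fixed multidegree $[u].D=(m_1,\dots,m_N)$, there are finitely many rigid curves. I will deduce this from an energy identity. Monotonicity together with $[\omega]^{rel}=\sum_j\kappa_j\operatorname{PD}^{rel}[D_j]$ and exactness of the $L_i$ give
\begin{equation*}
E(u)=\sum_j\kappa_j\,[u].D_j+(\text{action terms of }y),
\end{equation*}
so Gromov compactness at bounded energy applies. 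Filtration preservation is immediate, because the exponent of $q$ is additive under gluing.

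\emph{$A_\infty$-relations and curvature.} I will consider the signed boundary of each compactified one-dimensional space $\overline{\mathcal{M}}_{d,k,d}(y)$. By the preceding theorem its boundary is exactly $\bigsqcup\mathcal{M}_{d_1,k_1,m_1}\times\mathcal{M}_{d_2,k_2,m_2}$, with interior marked points distributed between the two components and intersection numbers adding. Summing over rigid configurations weighted by $q^{m_1+m_2}$ and using the orientation conventions of \cite{seidel2008fukaya,sheridan2020versality} then gives
\begin{equation*}
\sum(-1)^{*}m^{rel}_{k_1}(\dots,m^{rel}_{k_2}(\dots),\dots)=0 .
\end{equation*}
For the curvature condition $m_0\in\F_{\geq1}$: a disk contributing to $m_0$ is non-constant, so by Lemma \ref{PositivityIntersection} it has $[u].D>0$. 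Curves with $[u].D=0$ lie in $X$ by Lemma \ref{MaxPrinciple} and are exactly the curves of $\F(X)$, and these give no $m_0$ term since the $L_i$ are exact.

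\emph{Independence.} Given two sets of choices, I will build an $A_\infty$-functor by counting disks whose perturbation data interpolate between them (the usual quilted or continuation construction of \cite{seidel2008fukaya}). The same positivity and compactness arguments make it filtered and curved, with $\mathcal{G}^0\in\F_{\geq1}$. Its associated graded, restricted to the $q^0$ part, is the continuation functor of the exact category $\F(X)$, which is a quasi-equivalence. Since $gr_*\hom=\hom^{\F(X)}\otimes gr_*\Lambda$ and the associated graded $gr_*m_k$ only sees $q^0$ curves, the map $gr_*\mathcal{G}$ is a quasi-equivalence. Hence $\mathcal{G}$ is a filtered quasi-equivalence.

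\emph{Main obstacle.} I expect the hardest step to be the codimension-one sphere bubbles with tangency to $D$ noted in the text, which could spoil the boundary description. My first approach is to follow Sections 5.5 and 5.7--5.8 of \cite{perutz2022constructing}: use Assumption \ref{Assumption}(2) to make the data depend on the intersection pattern with the auxiliary divisor components, so that constant-on-spheres perturbations together with an index count exclude such bubbles in dimension $\leq1$. Assumption \ref{Assumption}(1) is what Lemma \ref{PositivityIntersection} needs.
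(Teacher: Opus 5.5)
Your outline is correct and is essentially the argument of the works the paper cites for this statement (the paper itself gives no proof). That argument has four parts: the $A_\infty$-relations come from the boundary of the one-dimensional spaces $\overline{\mathcal M}_{d,k,d}(y)$; $q$-adic finiteness comes from the energy identity at fixed multidegree; sphere bubbles are excluded by the Sheridan/Perutz--Sheridan perturbation scheme under Assumption \ref{Assumption}; and independence is checked on the associated graded, where $gr_*\hom=\hom^{\F(X)}\otimes gr_*\Lambda$ and everything reduces to the exact category $\F(X)$. One small correction: $m_0\in\F_{\geq1}$ follows directly from domain stability, since $k=0$ forces $d\geq1$ interior points, each mapped to $D$, so the exponent of $q$ is at least $1$. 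Lemma \ref{PositivityIntersection} as stated is about unperturbed disks with boundary on a single Lagrangian, so it does not literally apply to the Hamiltonian-perturbed punctured disks that contribute to $m_0$.
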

 \begin{prop}
             The relative Fukaya category $\F(M, D)$ is a deformation of the compact exact Fukaya category $\F(X)$.
        \end{prop}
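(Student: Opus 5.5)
The plan is to show that setting all Novikov variables to zero in $\F(M,D)$ recovers $\F(X)$ exactly, and that the $q$-adic filtration makes $\F(M,D)$ a filtered, complete $\Lambda$-linear $A_\infty$-structure on $\hom^{\F(X)}\hat{\otimes}_\mathbb C\Lambda$. This is precisely what "deformation" means here.

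\textbf{Step 1: matching Floer data.} I would first arrange the Floer and perturbation data for $\F(M,D)$ so that, near $\overline{X}$, they agree with those chosen for $\F(X)$. The data for $\F(M,D)$ are only required to be adapted to $D$ (Definitions \ref{AdapHamDefn} and \ref{AdapJdefn}) and of contact type on the neck region. The data for $\F(X)$ are required to vanish, respectively equal $J_0$, near $D$. Since $H\equiv 0$ near $D$ is trivially adapted, and $J_0$ may be extended to an adapted structure on $M$, the choices for $\F(X)$ can be taken as admissible choices for $\F(M,D)$. With this choice the generators agree: $\mathcal X(L_0,L_1;H_{0,1})$ is the same set in both constructions. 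Hence
\[
\hom^{\F(M,D)}(L_0,L_1)=\hom^{\F(X)}(L_0,L_1)\hat{\otimes}_\mathbb C\Lambda .
\]
If the genericity requirements of \cite{perutz2022constructing} force different perturbations, I would fall back on the independence-up-to-filtered-quasi-equivalence statement quoted above for the relative category.

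\textbf{Step 2: the $q^0$ term.} The operation $m_k^{\text{rel}}$ is a sum of terms $\mu_u q^{[u].D}$. Its $q^0$ coefficient counts exactly the rigid curves with $[u].D=0$; for these, $d=m=0$, so no interior marked points are present. By positivity of intersection for adapted $J$, $[u].D_j\ge 0$ for each $j$, so $[u].D=0$ forces $[u].D_j=0$ for all $j$. By Assumption \ref{Assumption}(1), the classes $\operatorname{PD}^{rel}[D_j]$ span $H^2(M,X;\mathbb Q)$, so $[u]=0\in H_2(M,X)$. The integrated maximum principle (Lemma \ref{MaxPrinciple}) then places $u$ inside $X$. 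These are precisely the curves defining $m_k^{\text{exact}}$, counted with the same orientation conventions. Therefore
\[
m_k^{\text{rel}}\equiv m_k^{\text{exact}} \pmod{\F_{\geq 1}}.
\]

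\textbf{Step 3: the curvature term.} The $k=0$ operation $m_0^{\text{rel}}$ counts disks with a single output chord and no inputs. Every such disk is non-constant. For $L_0=L_1$, Lemma \ref{PositivityIntersection} gives $[u].D>0$; in general, the same exactness argument kills the $q^0$ part. Hence $m_0\in\F_{\geq1}$, so the curvature lies in positive filtration, as required by the definition of a curved filtered $A_\infty$-category.

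\textbf{Step 4: filtration and convergence.} Filtration-preservation and $q$-adic convergence follow from Gromov compactness together with the finiteness of curves at each intersection number. This is already part of the quoted theorem of \cite{perutz2022constructing} and \cite{sheridan2020versality}.

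\textbf{Main obstacle.} The delicate step is Step 1: checking that the adapted and generic data needed for transversality of $\mathcal M_{d,k,d}(y)$, which handles the tangency sphere bubbles via Sections 5.5--5.8 of \cite{perutz2022constructing}, can be chosen to restrict, on the $[u].D=0$ stratum, to a regular choice for the exact category. I would argue that the $[u].D=0$ curves never approach $D$, so perturbations supported away from $\overline X$ do not affect them. Regularity in the exact setting is achieved inside $\overline X$, independently of the choices made near $D$.
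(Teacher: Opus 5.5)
Your proposal is correct and follows essentially the same route as the paper. You match the Floer data so that $\hom^{\F(M,D)}=\hom^{\F(X)}\hat{\otimes}_\mathbb{C}\Lambda$, observe that the $q^0$ coefficient of $m_k^{\text{rel}}$ counts only curves with $[u]=0\in H_2(M,X)$, which the maximum principle places in $X$ so that they are exactly the curves defining $m_k^{\text{exact}}$, and fall back on independence of auxiliary choices. The only real difference is how common regular data are obtained: the paper intersects Baire sets, whereas you note that regularity of the $d=0$ moduli spaces depends only on the data over $\overline{X}$, which is if anything the more convincing justification.
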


        \begin{proof}
            It suffice to show that there exists a strict isomorphism $\F(M,D)\hat{\otimes}_\Lambda\Lambda/\langle q_1, \dots, q_N\rangle\cong \F(X)$. Indeed, by definition we have the objects of $\F(M, D)$ are the same as that $\F(X)$. Given two objects $L_0, L_1$, we choose Floer datum $(H^{\text{exact}}_{0,1}, J^{\text{exact}}_{0,1})$ and $(H^{\text{rel}}_{0,1}, J^{\text{rel}}_{0,1})$ so that $H^{\text{rel}}_{0,1}\equiv H_{0,1}^{\text{exact}}$on a neighborhood of $D$ containing $\partial\overline X$. Using such Floer datum, morphism spaces in $\F(M, D)$ are precisely $\hom^{\F(M,D)}(L_0, L_1):=\hom^{\F(X)}(L_0, L_1)\hat{\otimes}_\mathbb C\Lambda$. As for the $A_\infty$-structure maps, We denote by $m_k^{\text{rel}}$ and $m_k^{\text{exact}}$ the corresponding $A_\infty$-operators on $\F(M, D)$ and $\F(X)$ respectively and observe that the $\textit{coefficient moduli space}$ of $m^{rel}_k\text{ (mod }q)$ is of the form of pseudo-holomorphic polygons $u$ satisfying $[u]=0\in H^2(M,X)$. By Lemma \ref{PositivityIntersection}, it follows that all such curves have images in $X$. Observe that, the space of compactly-supported Hamiltonians in $X$ is contained in $C^{\infty}(M,D)$ and that $\mathcal{J}\cap\mathcal{J}^{rel}\neq\emptyset$. As intersection of Baire sets is a Baire set, we can find generically auxiliary data for the construction of $\F(M,D)$ and auxiliary data for the construction $\F(X)$ such that $m_k^{\text{rel}}\text{ (mod }q)=m_k^{\text{exact}}$ for all integers $k\geq0 $ and for any collection of objects in any of $\F(M,D)$ and $\F(X)$.\\
            Now the result follows from the fact that the construction of $\F(M,D)$ and $\F(X)$ is independent of auxiliary choices up to quasi-equivalence. 
        \end{proof}

        \paragraph{Wrapped Fukaya Category of $X$:}
        We start by recalling the construction of wrapped Floer homology following \cite{abouzaid2010open}.
        \begin{defn}[Cylindrical Lagrangians]
            A Lagrangian $L\subset X$ is said to be cylindrical if $\theta|_L$ is an exact $1$-form on $L$ of the form $df=\theta|_L$ such that $\operatorname{supp}(f)\subseteq L\cap\{\rho\leq 1-\epsilon\}$. 
        \end{defn}
        \begin{example}
            Any closed exact Lagrangian in $\overline{X}$ is a cylindrical Lagrangian in $X$.
        \end{example}
        Let $L_0,L_1$ be two cylindrical Lagrangians each equipped with a brane structure. Fix a $[0,1]_t$-dependent set of admissible basic cofinal family of Hamiltonins $\{H'_n\}_{n\geq1}$ such that for every $n\geq 1$ the image of the $L_0$ under the time-$1$ flow of $X_{H'_n}$ intersects $L_1$ transversely. We denote by $CF^*(L_0,L_1;H'_n):=\bigoplus_{y\in\mathcal{X}(L_0, L_1; H'_n)}|o_{y}|_\mathbb G$. Let $(J_{n,t})_{t\in[0,1]}$ be time-dependent almost complex structure on $X$ such that $J_{n,t}$ is of contact-type as in Definition \ref{Contact-typeAlmostComplex} on $\{\rho\geq 1-\epsilon\}$. Now for two distinct Hamiltonian chords $y_-, y_+\in\mathcal X(L_0, L_1; H'_n)$ we consider smooth maps $u:\mathbb R_s\times[0,1]_t\rightarrow X$ such that:
        \begin{enumerate}
            \item $\partial_su+J_{n,t}(\partial_tu-X_{H'_n}(u))=0$,
            \item $u(., 0)\in L_0$, $u(., 1)\in L_1$,
            \item $\int_{\mathbb R\times [0,1]}|\partial_su|^2 ds\wedge dt<\infty$,
            \item $\lim_{s\rightarrow\pm\infty}u(s,t)=y_\pm(t)$. 
        \end{enumerate}
    Denote by $\tilde{\mathcal{M}}(y_-, y_+)$ the moduli space of such map and note that, for a generic choice of $(J_{n,t})_{t\in[0,1]}, \tilde{\mathcal{M}}(y_-, y_+)$ has the structure of an oriented finite-dimensional smooth manifold, where the $\mathbb R$-action given by $s_0.u(s,t)\mapsto u(s+s_0,t)$ is free. Denote by $\mathcal{M}(y_-, y_+):=\tilde{\mathcal{M}}(y_-,y_+)/\mathbb R$. We define the Floer differential by
    \begin{equation*}
        \partial^{WF}y_+:=\sum_{y_-\neq y_+\in\mathcal{X}(L_0, L_1; H'_n)} |\mathcal{M}(y_-, y_+)|_\mathbb G.y_-.
    \end{equation*}
Similarly as in the $\textit{closed case}$, we use domains with sprinkles to break the $\mathbb R$-symmetry in the definition of the continuation maps. Namely, let $H'_s$ be a generic smooth monotone homotopy from $H'_{n+1}$ to $H_n'$ and $J_s$ be a smooth homotopy from $J_{n+1}$ to $J_n$. Namely, $(H'_s, J_s)\equiv (H'_n, J_n)$ for $s>>1$ and $(H'_s, J_s)\equiv (H'_{n+1}, J_{n+1})$ for $s<<-1$. For Hamiltonian chords $y_n\in\mathcal{X}(L_0, L_1; H_n')$ $y_{n+1}\in\mathcal{X}(L_0, L_1; H_{n+1}')$, rigid counts of the moduli space $\mathcal{M}^{al}(y_{n}, y_{n+1})$ parametrizing smooth maps $u:\mathbb R_s\times[0,1]_t\rightarrow X$ satisfying:
\begin{enumerate}
    \item $\partial_su+J_s(\partial_tu-X_{H_s}(u))=0$,
    \item $u(., 0)\in L_0$, $u(., 1)\in L_1$,
    \item $\int_{\mathbb R\times [0,1]}|\partial_su|^2 ds\wedge dt<\infty$,
    \item $\lim_{s\rightarrow-\infty}u(s,t)=y_{n+1}(t)$ and $\lim_{s\rightarrow\infty}u(s,t)=y_{n}(t)$,
\end{enumerate}
defines a linear map $c(y_{n}):=\sum_{y_{n+1}\in\mathcal{X}(L_0, L_1; H'_{n+1})} |\mathcal{M}^{al}(y_{n}, y_{n+1})|_\mathbb G.y_{n+1}$, defining a continuation chain map $c:CF^*(L_0,L_1;H'_n)\rightarrow CF^*(L_0,L_1;H'_{n+1})$. The cochain complex of the wrapped Floer homology is the telescopic complex given by
\begin{equation*}
    \mathcal{W}(L_0, L_1;\{H'_n\}_{n\geq1}):=\bigoplus_{n\geq 1}CF^*(L_0,L_1;H'_n)[\tau],
\end{equation*}
where $\tau$ is a formal variable of $\deg(\tau)=-1$ and $\tau^2=0$. Its differential is given by
\begin{equation*}
    m_1^{\text{wrap}}(y+\tau y'):=(-1)^{|y|}\partial^{WF}y+(-1)^{|y'|}(\tau\partial^{WF}y'+c(y')-y'). 
\end{equation*}
The homology $H^*(\mathcal{W}(L_0, L_1;\{H'_n\}_{n\geq1}), m_1^{\text{wrap}})\equiv HW^*(L_0, L_1)$ is independent, up to linear isomorphism, of the choices $\{H'_n\}_{n\geq1}$ or $\{J_n\}_{n\geq1}$. With this in hand, the construction of the wrapped Fukaya category follows the same steps as above.\\

Objects of $\mathcal W(X)$, the wrapped Fukaya category of $X$, are cylinderical Lagrangians equipped with brane structure. Given two objects $L_0, L_1$ of $\mathcal W(X)$ we first choose a generic Floer datum $\{H'_n\}_{n\geq1}$ and $\{J_n\}_{n\geq1}$ as above and define their morphim space by $\hom^{\mathcal W(X)}(L_0,L_1):=\mathcal{W}(L_0, L_1;\{H'_n\}_{n\geq1})$. As for the $A_\infty$-structure, given a Lagrangian label $\mathbb L=(L_0, \dots, L_k)$ for $k\geq0$, we fix a consistent universal choice of strip-like ends $\epsilon_0, \epsilon_1, \dots, \epsilon_k$ and generic Floer datum as above, having no triple intersections. We fix a consistent universal choice of perturbation $(K_r, J_r)$ for $r\in\R^{\text{disk}, al}_{k,p, n}$. Let $y_i\in\mathcal{X}(L_i, L_{i+1})$ and write $r=[\Sigma; z_0, \dots,z_k;\psi]$, we set $C_r:=\Sigma\setminus\{z_0, \dots, z_k\}$ and consider smooth maps $u:C_r\rightarrow X$ such that
\begin{enumerate}
    \item $(du-Y)^{0,1}=0$,
    \item for every $z\in\partial_iC_r$, $u(z)\in L_i$,
    \item $\lim_{s\rightarrow-\infty}u(\epsilon_0(s,t))=y_0(t)$ and $\lim_{s\rightarrow\infty} u(\epsilon_i(s,t))=y(t)$. 
\end{enumerate}
Following Sections $2.5, 2.6$ of \cite{abouzaid2010open}, for a generic choice of perturbation data, the moduli space $\mathcal{M}^{\text{disk}, al}_{k, p, n}(y):=\{(r, u)\}$, where $r\in\R^{\text{disk}, al}_{k,p, n}$ and $u$ is as above, has the structure of an oriented finite-dimensional smooth manifold. Counting rigid curves provides us with a linear map
\begin{equation*}
    \tilde{m}^{\text{wrap}}_k:\hom^{\mathcal{W}(X)}(L_0, L_1)\otimes\dots\otimes\hom^{\mathcal W(X)}(L_{k-1}, L_k)\rightarrow \bigoplus_{n\geq1}CF^*(L_0,L_1;\{H'_n\}_{n\geq1}),
\end{equation*}
with a unique $\tau$-linear extension, commuting with $\partial_\tau$. Similarly as above, we set
\[
m^{\text{wrap}}_k(y_1, \dots, y_k):=
\begin{cases}
    \tilde{m}_k^{\text{wrap}}(y_1, \dots, y_k) & \text{for } k\geq2,\\
    \tilde{m}_1^{\text{wrap}}(y_1)+(-1)^{|y_1|}\partial_\tau y_1 & \text{for } k=1. 
\end{cases}
\]
\begin{thm}[\cite{abouzaid2010open}]
    The operations $\{m^{\text{wrap}}_k\}_{k\geq 0}$ gives $\mathcal{W}(X)$ the structure of an $A_\infty$-category. Moreover such structure is independent, up to quasi-equivalence of $A_\infty$-categories, of the choice of stip-like ends, Floer datum, and perturbation datum.
\end{thm}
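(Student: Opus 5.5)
The plan follows the standard strategy of \cite{abouzaid2010open} and \cite{seidel2008fukaya}, and proceeds in three stages.
\begin{enumerate}
\item Show that for generic consistent perturbation data the moduli spaces $\mathcal{M}^{\text{disk}, al}_{k,p,n}(y)$ are compact in dimension $0$ and admit compactifications by manifolds with boundary in dimension $1$.
\item Identify the boundary strata of the one-dimensional spaces with the terms of the $A_\infty$-relation for $\{m^{\text{wrap}}_k\}$, including the $\tau$-telescope terms.
\item Construct $A_\infty$-functors comparing different choices of data and check that they are quasi-equivalences on cohomology.
\end{enumerate}

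\textbf{Step 1: compactness.} I will first establish an a priori $C^0$-bound. The tools are:
\begin{itemize}
\item $J_r$ is of contact type on $\{\rho\geq 1-\epsilon\}$, so $d\rho=\theta\circ J_r$ there;
\item the Hamiltonians are of the form $n_i h(\rho)$ near infinity, with $h$ linear;
\item the $1$-forms satisfy $d\gamma_r\leq 0$ and $\epsilon_i^*\gamma_r=n_i\,dt$ on the ends;
\item the Lagrangians are cylindrical.
\end{itemize}
With these, the usual maximum principle for $\rho\circ u$ (interior by subharmonicity, boundary by the Lagrangian condition $\theta|_L=df$ with $f$ compactly supported) shows that every solution stays in a compact region determined by the asymptotic chords. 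Stokes' theorem together with $d\gamma_r\leq 0$ gives an energy bound of the form $E(u)\leq\mathcal{A}(y_0)-\sum_i\mathcal{A}(y_i)$. Exactness of $X$ and of the branes excludes sphere and disk bubbling. Gromov compactness then leaves only broken configurations as limits. Transversality is achieved by the inductive choice of perturbation data over the strata of $\overline{\R}^{\text{disk}, al}_{k,p,n}$, exactly as in Section 9k of \cite{seidel2008fukaya}.

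\textbf{Step 2: the $A_\infty$-relations.} The codimension-one strata of $\overline{\R}^{\text{disk}, al}_{k,p,n}$ are of three types:
\begin{itemize}
\item disk breakings along a strip-like end, with weights and sprinkles distributed between the two components;
\item semi-infinite breakings producing Floer strips, giving $\partial^{WF}$ terms;
\item strata where a sprinkle escapes to an end, giving continuation maps $c$, or where the weights force the identification that appears in the $\tau$-telescope.
\end{itemize}
Counting the boundary of the compactified one-dimensional moduli spaces with the orientation conventions of \cite{seidel2008fukaya} and \cite{abouzaid2010open} gives the relation $\sum \tilde m\circ\tilde m=0$ up to these continuation and identity terms. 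These are absorbed precisely by the unique $\tau$-linear extension commuting with $\partial_\tau$ and by the extra term $(-1)^{|y_1|}\partial_\tau y_1$ in $m^{\text{wrap}}_1$. This yields the full $A_\infty$-relations on $\hom^{\mathcal W(X)}$.

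\textbf{Step 3: independence of choices.} Given two sets of choices (ends, Floer data, perturbations), I will interpolate between them over the Stasheff-type multiplihedra (disks with an additional real parameter). Counting rigid solutions with interpolating data defines an $A_\infty$-functor that is the identity on objects. Its linear term is the continuation map on telescopes, which is a quasi-isomorphism since both complexes compute $HW^*(L_0,L_1)$, the colimit of $HF^*(L_0,L_1;H'_n)$. Hence the functor is a quasi-equivalence.

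\textbf{Main obstacle.} I expect the hardest step to be the compactness and boundary identification in the presence of sprinkles and weights. One must ensure that the rescaled Hamiltonian terms do not spoil the maximum principle when weights are redistributed under breaking; this is where the condition $d\gamma_r\leq 0$ and the consistency of the universal ends must be used carefully. I would handle it by choosing $\gamma_r$ to be closed near all ends and neck regions. One must also ensure that the sign contributions of the sprinkle strata match the $\tau$-telescope signs, which I would check directly against the sign conventions of Section 6 of \cite{borman2024l_}.
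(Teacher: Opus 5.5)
Your three-stage outline is the popsicle/telescope argument of \cite{abouzaid2010open}, which is all the paper relies on, since it cites this result without proof. However, Step~2 as written omits an ingredient, and without it the boundary count does not reduce to the $A_\infty$-relations.

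Under disk breaking, a sprinkle on a stick running to an input of the inner component can land on either side of the node. If two or more land on the output-side component, that component carries a flavour $p$ with $|p^{-1}(\text{node})|\geq 2$. Similarly, sprinkles from two different sticks can escape to the output end together. This bubbles off a strip with two sprinkles on its single stick, and it is a codimension-one stratum. Because $\tau^2=0$, the $\tau$-extension of $\tilde m^{\text{wrap}}$ only produces $\tau^0$- and $\tau^1$-outputs, so these strata have no counterpart in the $A_\infty$-relation. You therefore need a lemma that rigid counts with non-injective $p$ vanish. The standard proof runs as follows:
\begin{enumerate}
\item Choose the perturbation data equivariant under the permutations of $F$ that preserve $p$. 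This is possible in the inductive construction because that group acts freely on the open stratum; it is the analogue of the $\operatorname{Sym}(d,p)$-equivariance the paper imposes on its closed-string data.
\item The locus where two sprinkles on the same stick coincide has codimension one, so rigid solutions generically avoid it.
\item The transposition of those two sprinkles then acts freely on rigid solutions and reverses orientation, since it swaps two coordinates along the stick. The contributions therefore cancel in pairs.
\end{enumerate}
Only with this lemma does your bookkeeping of ``continuation and identity terms'' actually close up.

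A smaller point in Step~1: cylindricity of the branes does not by itself give the boundary maximum principle. Take $\xi$ tangent to $\partial C_r$, and use $d\rho\circ J_r=-\theta$, $d\rho(X_H)=0$ and $\theta|_{TL}=0$ near infinity. The inward derivative is then $d(\rho\circ u)(j\xi)=\theta(X_H)\,\gamma_r(\xi)=\rho h'(\rho)\,\gamma_r(\xi)$, which has no sign in general. You should also require $\gamma_r|_{\partial C_r}=0$, which is compatible with $\epsilon_i^*\gamma_r=n_i\,dt$ on the ends and is what \cite{abouzaid2010open} impose.

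Finally, Floer-strip breaking is a boundary stratum of the Gromov compactification of the maps, not of $\overline{\R}^{\text{disk}, al}_{k,p,n}$ itself.
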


\subsection{Closed-Open Map} \label{COconstruction}     
Consider $\mathcal W(X)$ the wrapped Fukaya category of $X$. We are interested in studying its $\textit{deformations}$ as an $A_\infty$-category and for which we recall the following notions.

\begin{defn}[Hochschild Cochain Complex]
    We define the Hochschild cochain complex of $\mathcal W(X)$ by
    \begin{equation*}
       \hspace{-1cm} CC^*(\mathcal{W}(X)):=\Pi_{L_0,\dots, L_k}\hom_\mathbb C(\hom^{\mathcal{W}(X)}(L_0, L_1)\otimes\dots\otimes\hom^{\mathcal W(X)}(L_{k-1}, L_k),\hom^{\mathcal{W}(X)}(L_0, L_k))[-k]. 
    \end{equation*}
\end{defn}
  $CC^*(\mathcal W(X))$ comes with a $\textit{Gerstenhaber}$ product structure given by
  \begin{equation*}
      (\phi_1\circ\phi_2)^l(a_l,\dots,a_1):=\sum_{1\leq i,j\leq l}(-1)^*\phi_1^{l-j+1}(a_l,\dots,a_{i+j-1},\phi_2(a_{i+j},\dots,a_{i+1}),\dots,a_1),
  \end{equation*}
where $*=(\deg(\phi_2)-1)(\sum_{1\leq j\leq i}\deg(a_j)-1)$. Moreover, $CC^*(\mathcal{W}(X))$ also comes equipped with a $\textit{bracket}$ operation, given by
\begin{equation*}
    [\phi_1,\phi_2]:=\phi_1\circ\phi_2-(-1)^*\phi_2\circ\phi_1,
\end{equation*}
    where $*=(\deg(\phi_1)-1)(\deg(\phi_2)-1)$. In such context, $A_\infty$-structures on $\mathcal{W}(X)$ are equivalent to a choice of Hochschild cohomology class $m\in CC^2(\mathcal W(X))$ such that $m\circ m=0$. Such $m$ gives rise to a differential 
    \begin{equation*}
        \partial\phi:=[m, \phi],
    \end{equation*}
making $(CC^*(\mathcal W(X))[1], \circ, [.,.], \partial)$ a differential graded Lie algebra.\\

Let $\mathbb L=(L_0, \dots, L_k)$ be a Lagrangian label. Let $\{H_n\}_{n\geq 1}$ and $\{H'_n\}_{n\geq 1}$ be two admissible cofinal families of basic Hamiltonians as above and fix a consistent universal choice of ends and perturbations $(K, J)$ on $\R^{\text{disk}, al}_{d, k}$. Given a $p$-flavour and a tuple of $H_n$-Hamiltonian orbits $x$ and a tuple of $H'_n$-Hamiltonian chords $y$. For $r\in\R^{\text{disk}, al}_{d,k, p, n}(\mathbb L)$ written as $r=[\Sigma; z_0, \dots,z_k;\zeta_1, \dots,\zeta_d;\psi]$ we set $C_r:=\Sigma\setminus\{z_0, \dots,z_k; \zeta_1, \dots, \zeta_d\}$. Consider smooth maps $u:C_r\rightarrow X$ such that
\begin{enumerate}
    \item $(du-Y)^{0,1}=0$,
    \item $\lim_{s\rightarrow-\infty}u(\epsilon_{z_0}(s,t))=y_0(t)$ and $\lim_{s\rightarrow-\infty}u(\epsilon_{\zeta_0}(s,t))=x_0(t)$,
    \item $\lim_{s\rightarrow\infty}u(\epsilon_{z_i}(s,t))=y_i(t)$ and  $\lim_{s\rightarrow\infty}u(\epsilon_{\zeta_i}(s,t))=x_i(t)$ for $i\neq 0$.
\end{enumerate}
We denote by $\mathcal{M}_{d, k,p,n}(x,y):=\{(r,u)\}$ the moduli space of all such tuples such that $r\in\R^{\text{disk}, al}_{d, k, p, n}(\mathbb L)$ and $u$ is a smooth map as above. 
\begin{thm}[\cite{abouzaid2010open}, \cite{borman2024l_}, \cite{strom2024maurer}]
    There exists a generic consistent, equivariant universal choice of perturbation data such that, $\mathcal{M}_{d,k,p,n}(x,y)$ has the structure of an oriented finite-dimensional smooth manifold.\\
    In the case of $\dim_\mathbb R\mathcal{M}_{d, k,p,n}(x,y)=0$, then it is a compact oriented manifold. 
\end{thm}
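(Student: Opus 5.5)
This is the standard structural package for the closed-open moduli spaces. I would prove it by the same scheme used for the $L_\infty$-moduli spaces in Theorem \ref{CompactnessResult} and for the wrapped moduli spaces of \cite{abouzaid2010open}. There are three steps: transversality via an inductive choice of perturbation data, orientations via the orientation lines $o_x,o_y$, and compactness of the rigid part via energy bounds plus a maximum principle.

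\textbf{Step 1: transversality.} I would realise $\mathcal{M}_{d,k,p,n}(x,y)$ as the zero set of a Fredholm section $\bar\partial_{K,J}$ of a Banach bundle over $\R^{\text{disk},al}_{d,k,p,n}(\mathbb L)\times\mathcal B$. Here $\mathcal B$ is the space of $W^{1,p}$-maps with the prescribed Lagrangian boundary conditions and asymptotics. The perturbation data $(K,J)$ are chosen inductively over the strata of the Fulton--Macpherson compactification $\overline{\R}^{\text{disk}}_{d,k}$, following Section 9g of \cite{seidel2008fukaya} and Section 2.4 of \cite{abouzaid2010open}. At each stage they are required to agree with the already chosen data on boundary strata through the gluing charts. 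Consistency with the $L_\infty$-data of Theorem \ref{L_inftyPerturbationData} is imposed on dashed vertices, and consistency with the wrapped data on solid vertices.

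$\operatorname{Sym}(d,p)$ acts freely on the open part of the domain moduli space. Hence one can choose the data equivariantly and still achieve transversality on the open stratum, by a Sard--Smale argument applied to the universal moduli space over the quotient. Every non-constant solution has an injective point on its domain, away from the ends, where a domain-dependent perturbation of $J$ or $K$ can be varied. So the universal section is transverse, and generic data give a smooth manifold. Its dimension is computed by the index formula $|y_0|+|x_0|$-terms $-\sum|y_i|-\sum|x_i|+\dim\R^{\text{disk},al}_{d,k,p,n}$.

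\textbf{Step 2: orientations.} The orientation comes from the standard gluing isomorphism of determinant lines. It identifies $\det(D_u)\otimes o_{y_1}\otimes\cdots\otimes o_{x_d}$ with $o_{y_0}$, up to the orientation of the domain moduli space. The Pin structures and grading lifts in the brane data of Definition \ref{LagBraneDefn} make this identification canonical, exactly as in Section 6 of \cite{borman2024l_}.

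\textbf{Step 3: compactness in dimension zero.} This is the main obstacle. I would take a sequence of rigid solutions and show that it has a convergent subsequence inside the same moduli space.

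First, energy. The condition $d\gamma_r\le 0$ and the curvature bound on $K_r$ give an a priori energy bound in terms of the actions of the asymptotic orbits and chords.

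Second, confinement. The integrated maximum principle (Lemma \ref{MaxPrinciple}), together with the contact-type condition on $J_r$ on the neck region, keeps all curves inside a fixed compact subset of $X$. In particular they avoid $D$, since the curves here map to $X$. This excludes escape to infinity.

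Third, bubbling. Because $X$ is exact and the Lagrangians are exact, there is no sphere or disk bubbling.

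So Gromov--Floer compactness leaves only two kinds of limit: broken configurations (breaking at strip-like or cylindrical ends) and degenerations of the domain to boundary strata of $\overline{\R}^{\text{disk}}_{d,k}$. Each such limit has expected dimension at most $-1$ by index additivity, and is therefore empty for generic data. The one subtle case is the degenerate trivial-cylinder components described in Theorem \ref{CompactnessResult}. They are excluded because the weights and flavours force any broken component to be non-trivial and hence regular. Consequently the zero-dimensional moduli spaces are compact.
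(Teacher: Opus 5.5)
Your outline is the standard argument of the works cited for this statement: an inductive, consistent, $\operatorname{Sym}(d,p)$-equivariant choice of domain-dependent perturbation data with Sard--Smale on the free open stratum; orientations from determinant lines and the brane data; and compactness from the energy bound, confinement to a compact subset of $X$, exactness ruling out bubbling, and index additivity excluding broken configurations. The paper gives no proof of its own beyond these citations, so this is essentially the same approach, with one correction: the confinement step is the Abouzaid--Seidel integrated maximum principle for Hamiltonian-perturbed curves (using $d\gamma_r\le 0$, $K_r\equiv 0$ and $J_r$ of contact type on the neck, and conical Lagrangians there), not Lemma \ref{MaxPrinciple}, which concerns unperturbed curves with $[u]=0\in H_2(M,X)$ and does not by itself confine the curves to a compact subset of $X$.
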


The signed count of rigid curves gives us a linear map
\begin{equation*}
    co_{d, k,p,n}:t^{i_1}|o_{x_1}|_\mathbb G\otimes\dots\otimes t^{i_d}|o_{x_d}|_\mathbb G\otimes t^{j_1}|o_{y_1}|_\mathbb G\otimes\dots\otimes t^{j_k}|o_{y_k}|_\mathbb G\rightarrow |o_{y_0}|_\mathbb G][2-2d-k],
\end{equation*}
where $i_s=p^{-1}(s)$ and $j_s=p^{-1}(d+s)$. Summing over all possibilities of $(p, n)$, we get a linear map
\begin{equation*}
    co_{d,k}:SC^*(X)^{\otimes d}\otimes\hom^{\mathcal{W}(X)}(L_0, L_1)\otimes\dots\otimes\hom^{\mathcal{W}(X)}(L_{k-1}, L_k)\rightarrow\bigoplus_{n\geq 1} CF^*(L_0, L_l; H'_n)[2-2d-k]. 
\end{equation*}
Following Section $6$ of \cite{borman2024l_}, $co_{d, k}$ has a unique $\tau$-linear extension to $\hom^{\mathcal W(X)}(L_0, L_l)$ commuting with $\partial_\tau$. After summing over all $(d, k)$ and using the fact that by exactness of $X$, such sum is actually a finite sum, we get the following:
\begin{thm}[\cite{borman2024l_}]\label{L_inftyMorphism}
    $\operatorname{CO}:(SC^*(X;\{H_n\}_{n\geq1})[1], \ell_*)\rightarrow (CC^*(\mathcal W(X))[1], \circ, [.,.], \partial)$ defines an $L_\infty$-morphism, such that $\operatorname{CO}_0=\tilde{m}^{\text{wrap}}$. 
\end{thm}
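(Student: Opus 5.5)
The plan is the standard ``count the ends of one-dimensional moduli spaces'' argument, carried out in the telescope model with sprinkles. We must show that, for every $d\geq1$ and all inputs $x_1,\dots,x_d\in SC^*(X;\{H_n\}_{n\geq1})$,
\begin{equation*}
\sum_{\sigma}\pm\,\partial\,\operatorname{CO}_d(x_{\sigma})+\sum_{d_1+d_2=d}\pm\tfrac{1}{2}\bigl[\operatorname{CO}_{d_1}(x_{\sigma_1}),\operatorname{CO}_{d_2}(x_{\sigma_2})\bigr]=\sum \pm\operatorname{CO}_{d-j+1}\bigl(\ell_j(x_{\sigma_1}),x_{\sigma_2}\bigr),
\end{equation*}
where $\partial=[m^{\text{wrap}},\cdot]$, and that $\operatorname{CO}_0=\tilde m^{\text{wrap}}$. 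The second statement is immediate: for $d=0$ the moduli spaces $\mathcal{M}_{0,k,p,n}(\emptyset,y)$ are exactly those defining $\tilde m^{\text{wrap}}_k$.

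\textbf{Step 1: well-definedness.} First I establish that each $\operatorname{CO}_d$ is well defined and graded symmetric.
\begin{enumerate}
\item Since $X$ is exact and $J_r$ is of contact type on the neck with $d\gamma_r\leq0$, the maximum principle (the argument of Lemma \ref{MaxPrinciple}) confines all solutions to a compact region.
\item Exactness together with the action filtration bounds the energy. Hence for fixed outputs only finitely many $(d,k,p,n)$ contribute, which gives the finiteness asserted before the statement.
\item $\operatorname{Sym}(d,p)$-equivariance of the perturbation data lets us average over permutations of interior inputs. This yields graded-symmetric maps $SC^*(X)[1]^{\otimes d}\to CC^*(\mathcal W(X))[1]$ of the correct degree, using $\dim \R^{\text{disk}}_{d,k}=2d+k-2$ and the weight shift $[2-2d-k]$.
\end{enumerate}

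\textbf{Step 2: boundary strata.} Fix the data so that the $1$-dimensional spaces $\mathcal{M}_{d,k,p,n}(x,y)$ are smooth. By Gromov compactness for exact targets there is no sphere or disk bubbling without punctures, so the compactification is governed by the Fulton--MacPherson stratification of $\overline{\R}^{\text{disk}}_{d,k}$ decorated with sprinkles. I would classify the codimension-one strata as follows.
\begin{enumerate}
\item[(i)] Strip breaking at a boundary end, or a solid internal edge separating boundary punctures into two disks. When all interior points lie on one side, these give $m^{\text{wrap}}\circ\operatorname{CO}_d$ and $\operatorname{CO}_d\circ m^{\text{wrap}}$, i.e.\ $\partial\operatorname{CO}_d$. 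When interior points are split $d_1+d_2=d$, they give the Gerstenhaber compositions, which after symmetrization assemble into $[\operatorname{CO}_{d_1},\operatorname{CO}_{d_2}]$.
\item[(ii)] A dashed vertex, i.e.\ a cluster of $j\geq2$ interior points bubbling into an aligned-framed sphere $\R^{al}_{j,p,n}$. These give $\operatorname{CO}_{d-j+1}(\tilde\ell_j(\dots),\dots)$.
\item[(iii)] Floer-cylinder breaking at an interior end, giving the $\partial^{HF}$ part of $\ell_1$.
\item[(iv)] Sprinkle strata, where a sprinkle parameter goes to $0$ or $\infty$ (the $\mathbb R_{>0}$-fibres of $\R^{al}_{d,p}\to\R^{al}_d$). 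These produce the continuation terms $c(y)-y$ and their interior analogues.
\end{enumerate}

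The $\tau$-extension of Section 6 of \cite{borman2024l_} is designed exactly so that the strata in (iv), together with $\partial_\tau$, reproduce the $\tau$-parts of $m_1^{\text{wrap}}$ and of $\ell_1$. I would verify this by matching the unique $\partial_\tau$-commuting extension on both sides of the identity.

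\textbf{Step 3: signs.} Orient the strata using the $\mathbb G$-graded orientation lines $|o_x|_\mathbb G$ and $|o_y|_\mathbb G$ and the gluing isomorphisms. I then check that the induced boundary orientations reproduce the Koszul signs in the Gerstenhaber bracket and in the $L_\infty$-morphism equation after the shift $[1]$. Finally, summing the signed counts of the boundary of each compact $1$-manifold to zero gives the identity above.

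\textbf{Main obstacle.} I expect the hardest part to be the combination of the sign bookkeeping with the sprinkle/$\tau$ strata. In particular, one must show that the sprinkle-degeneration boundaries cancel exactly against the $\partial_\tau$ and $c-\id$ terms in both the source differential and the Hochschild differential. My first attempt would be to reduce this to the corresponding verification in \cite{abouzaid2010open} and Section 6 of \cite{borman2024l_}, treating each sprinkle as an independent $\mathbb R_{>0}$-parameter and checking the identity one flavour $p$ at a time.
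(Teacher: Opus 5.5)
Your proposal is correct and takes the same route as the paper. The paper states this result from \cite{borman2024l_} without proof, but its own parallel argument for $\operatorname{CO}^D$ (Steps 3--4 in the proof of Lemma~\ref{keylemma}) is exactly your count of ends of one-dimensional moduli spaces: single dashed-edge breakings give $\operatorname{CO}_{d-j+1}(\ell_j(\cdots),\cdots)$, single solid-edge breakings give $[m^{\text{wrap}},\operatorname{CO}_d]$ and $[\operatorname{CO}_j,\operatorname{CO}_{d-j}]$, and signs are deferred to Section 6 of \cite{borman2024l_}. One small clarification: your sprinkle strata (iv) are not a separate type but those same breakings with a cylinder or strip carrying the sprinkle splitting off, and they produce only the continuation terms $c$; the $-y$ terms come from $\partial_\tau$ and cancel algebraically, because the $\tau$-extension of $\operatorname{CO}$ commutes with $\partial_\tau$.
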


In this paper, we are interested in the full sub $A_\infty$-category of $\mathcal W(X)$, denoted by $\mathcal{W}(M,D)$, formed from closed exact Lagrangian branes in $\overline{X}$ where its $A_\infty$-structure is given by $m^{\text{wrap}, CO(\beta)}$, the $CO(\beta)$-deformed operator. Namely, the objects of $\mathcal{W}(M,D)$ are by definition, the same objects of $\F(X)$. Given two objects $L_0, L_1$ of $\mathcal W(M,D)$ we a pick Floer datum $(H_{0,1}, J_{0,1})$ as in the construction of $\hom^{\F(X)}(L_0, L_1)$ such that $J_0$ the reference almost complex structure is in $\mathcal{J}(M,D)$, and we extend smoothly $H_{0,1}$ to be in $C^\infty(M,D)$ on $\{\rho\geq1+\epsilon\}$. For such Floer datum we get $\hom^{\mathcal W(M,D)}(L_0,L_1)\equiv\hom^{\F(X)}(L_0,L_1)\hat{\otimes}_\mathbb C\Lambda$. Moreover, the $A_\infty$-structure on $\mathcal W(M,D)$ is given by
\[
m^{\text{wrap}}+\sum_{d\geq1}\frac{1}{d!}\operatorname{CO}_d(\beta,\dots,\beta).
\]

\section{Proof of the main theorem}

Denote by $\mathcal W_{cpt}(X)\cong\mathcal F(X)$ the full sub-$A_\infty$ category generated by compact objects in $\overline{X}$ and recall that 
\[
\mathfrak{g}_{>a}\equiv \mathcal P_{>a}SC^*(X)\oplus\bigoplus_{j=1}^N\mathbb C[\tau].|o_{x^{D_j}}|_\mathbb G
\]
with its given partial $L_\infty$-structure as in Definition \ref{PartialLinfty}.

\begin{lemma}[Key Lemma]\label{keylemma}
    There exists a graded-symmetric, $\mathcal P$-filteration preserving maps
    \[
    \operatorname{CO}^D_d:\g^{\otimes d}_{>0}\rightarrow CC^*(\mathcal W_{cpt}(X))[2-2d]
    \]
    for $d\geq1$, such that:
    \begin{enumerate}
        \item The $L_\infty$-morphism identities holds on $\g_{>1}$,
        \item $\operatorname{CO}^D_d|_{\mathcal P_{>0}SC^*(X)^{\otimes d}}\equiv \operatorname{CO}_d$ as in Theorem \ref{L_inftyMorphism}.
    \end{enumerate}
\end{lemma}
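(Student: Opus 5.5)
We construct $\operatorname{CO}^D_d$ by counting disks in $M$ rather than in $X$. Fix $(d,p,n)$ with weights as in Definition \ref{forgetfulPoint}, so the interior marked points split into $Z(n)$ and $Z^c(n)$. For inputs $x_1,\dots,x_d\in\g_{>0}$, where $x_i=x^{D_{j_i}}$ for $i\in Z(n)$ and $x_i$ is a $\mathcal P$-positive orbit of $H_{n_i}$ for $i\in Z^c(n)$, and for chords $y_0,\dots,y_k$ between compact objects of $\mathcal W_{cpt}(X)$, we consider pairs $(r,u)$ with $r\in\R^{\text{disk},al}_{d,k,p,n}(\mathbb L)$ and $u\colon C_r\to M$. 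The map $u$ solves the perturbed Cauchy--Riemann equation for perturbation data of the form in Theorem \ref{L_inftyPerturbationData}, extended to the disk moduli spaces so that $K_r$ takes values in $C^\infty(M,D)$ and $J_r\in\mathcal J(M,D)$. It must be asymptotic to $x_i$ at the unforgettable interior points and to the $y$'s at the boundary, satisfy $u(\zeta_i)\in D_{j_i}$ with $m_{i,k}=\delta_{k,j_i}$ at forgettable points, and satisfy $[u]\cdot D=|Z(n)|$, exactly as in Definition \ref{defnL_inftyModuli}. We then define $\operatorname{CO}^D_d$ by the signed count of rigid such configurations, weighted by $1/|\mathrm{Sym}|$ through the $\operatorname{Sym}(d,p)$-equivariance, with the unique $\tau$-extension commuting with $\partial_\tau$ as in Section 6 of \cite{borman2024l_}.

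\textbf{Basic properties.} Graded symmetry follows from the $\operatorname{Sym}(d,p)$-equivariance of the data together with the orientation convention $|o_{x^{D_j}}|_\mathbb G=\mathbb K$ and $|x^{D_j}|=2-\lambda_j$. The degree shift $2-2d$ is the index count: each $D$-incidence condition cuts out $2$ real dimensions, and this is offset by the shifted degree of $x^{D_j}$. For $\mathcal P$-filtration preservation we compare actions and Maslov indices along the curve, as in Theorem \ref{P-filtrationResult}, using $\mathcal R(K_r)\le C\,d\gamma_r$ and monotonicity, and noting that each intersection with $D_j$ contributes $\lambda_j$ to both the degree and the $\mathcal P$-value. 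Item (2) holds because when $Z(n)=\emptyset$ we have $[u]\cdot D=0$. Lemma \ref{MaxPrinciple} then places $u$ inside $X$, and if we choose the data on these strata to agree with those defining $\operatorname{CO}$ in Theorem \ref{L_inftyMorphism}, the counts coincide. Since there are no wrapping chords for compact Lagrangians, finiteness of the sums follows from the energy estimate and the fixed intersection number.

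\textbf{The $L_\infty$-morphism identities.} We analyze the ends of $1$-dimensional moduli spaces. The expected boundary strata are:
\begin{enumerate}
\item interior cylinder or sphere breaking at an orbit, which produces $\operatorname{CO}^D\circ\ell$ terms;
\item disk breaking along the boundary, which produces the bracket $[m,\operatorname{CO}^D]$ and the Gerstenhaber composition terms;
\item bubbling through $D$.
\end{enumerate}
Theorem \ref{CompactnessResult} rules out sphere bubbles and breakings through $\mathcal P$-negative orbits, but only for the $L_\infty$-moduli spaces. The disk analogue is where the restriction to $\g_{>1}$ should enter. If all inputs satisfy $\mathcal P>1$, then any intermediate orbit appearing in a breaking has $\mathcal P>1>0$ by filtration additivity. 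Sphere components carrying $D$-tangency conditions, the codimension-one strata of \cite{perutz2022constructing} Sections 5.5--5.8, then either have negative virtual dimension or are the spheres already accounted for in $\ell_d$. We handle these with the stabilizing-divisor and tangency-ordering perturbation scheme of \cite{perutz2022constructing}, chosen compatibly with the $L_\infty$ data of \cite{strom2024maurer}.

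The main obstacle is proving that for inputs in $\g_{>1}$ no uncontrolled boundary appears. Two kinds of strata must be excluded: disk components mapping into $D$, and sphere bubbles asymptotic to $\mathcal P$-negative $D$-orbits, which would contribute terms not present in $\ell$. The plan is a $\mathcal P$-index inequality. Any such bubble forces a component whose output has $\mathcal P\le 0$ while the remaining rigid part must have $\mathcal P$-gap at least $1$, and this contradicts the dimension count when the inputs lie in $\g_{>1}$. This is exactly where the argument fails on $\g_{>0}$, and the restriction to $\g_{>1}$ is what makes it go through.
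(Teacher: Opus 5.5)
Your construction of $\operatorname{CO}^D_d$, your list of boundary strata, and your proof of item 2 match the paper's. In item 2, $Z(n)=\emptyset$ forces $[u]\cdot D=0$, so $u$ lies in $X$ by Lemma \ref{MaxPrinciple} and the counts agree with $\operatorname{CO}_d$.

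The gap is the compactness step, which is the real content of the lemma and which you only announce as a plan. What must be shown is that Gromov limits have no components asymptotic to $\mathcal P$-negative orbits and no sphere bubbles. This is needed for rigid configurations with inputs in $\mathfrak g_{>0}$, and for $1$-dimensional ones with inputs in $\mathfrak g_{>1}$.

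The one concrete mechanism you give is that intermediate orbits have $\mathcal P>1$ ``by filtration additivity''. This fails, for two reasons.
\begin{enumerate}
\item $\mathfrak g_{>1}$ contains the formal generators $x^{D_j}$, which have no positive $\mathcal P$-value. In the bookkeeping their incidence points act as caps with $|\hat x_e|=2$, $\mathcal A(\hat x_e)=0$, $\mathcal P(\hat x_e)=-2$. So a dashed subtree fed by them can output a $\mathcal P$-negative $D$-orbit.
\item Filtration preservation is a statement about counts of regular curves. The dangerous components are exactly those hanging below a $D$-orbit, where no index bound is available.
\end{enumerate}
Also, if additivity held, it would work equally well on $\mathfrak g_{>0}$. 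So it cannot be what distinguishes $\mathfrak g_{>1}$, contrary to your own closing remark.

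What is needed is the paper's mixed energy/index estimate on the limiting $2$-colored tree. First prune the weight-zero dashed edges and cap the resulting leaves as above. Then split the vertices into two kinds.
\begin{enumerate}
\item At solid vertices, and at dashed vertices not below a $\mathcal P$-negative orbit, use transversality ($\operatorname{ind}\ge 0$). Combine this with $[u_v]\cdot D+c_1(B_e)\ge 1$ for the pruned bubble trees $B_e$, which follows from positivity of intersection and monotonicity.
\item On the subtrees $U$ rooted at the first $\mathcal P$-negative orbits, use only action estimates: $n_{in(v)}=\sum_e n_e+|F_v|$ and $\mathcal A(\hat x_{in(v)})\ge\sum_e\mathcal A(\hat x_e)+C_{n_v}|F_v|$.
\end{enumerate}
Summing with the index formula gives
\[
\sum_{v\in U}\Bigl(2d_v-2+\bigl(1+C+\tfrac{1}{\kappa}C_{n_v}\bigr)|F_v|+\sum_e\mathcal P(\hat x_e)\Bigr)\le \dim\mathcal M_{d,k,p,n}(x,y)+1-|I|.
\]
Since $1+C+\frac1\kappa C_{n_v}>3$, this rules out $U\neq\emptyset$ in the two cases above, as in Lemma 3.6 of \cite{strom2024maurer}.

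For sphere bubbles, your dichotomy ``negative virtual dimension or already accounted for in $\ell_d$'' misses the borderline case. The index count $2\sum_{e:n_e=0}([u_v]\cdot D+c_1(B_e)-1)\le 1-|iE(T')|$ leaves exactly $[u_v]\cdot D=0$ and $c_1(B_e)=1$: a Chern-one sphere in $D$ at an incidence point. This stratum is codimension one, is not a term of $\ell_d$, and must be excluded separately. The paper does this using Lemmas 3.4 and 3.8 of \cite{strom2024maurer}, or Corollary 5.13 and Lemma 5.16 of \cite{perutz2022constructing}.
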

\begin{proof}
Let $\mathbb L=(L_0, \dots, L_k)$ be a Lagrangian label of compact exact Lagrangian branes. Let $\{H_n\}_{n\geq 1}$ and $\{H'_n\}_{n\geq 1}$ be two admissible cofinal families of basic Hamiltonians as above and fix a consistent universal choice of ends and perturbations $(K, J)$ on $\R^{\text{disk}, al}_{d, k}$. Given a $p$-flavor, $p:F\rightarrow\{1, \dots, d+k\}$, $n=(n_1, \dots, n_d)$ a tuple of weights, a tuple $x=(x_1, \dots, x_d)$ such that $x_i\in\g_{>0}$ and a tuple of $H'_n$-Hamiltonian chords $y=(y_0, y_1, \dots, y_k)$. For $r\in\R^{\text{disk}, al}_{d,k, p, n}(\mathbb L)$ written as $r=[\Sigma; z_0, \dots,z_k;\zeta_1, \dots,\zeta_d;\psi]$ we set $C_r\setminus\{z_0, \dots,z_k; \zeta_i\}$ such that $i\in Z^c(n)$. Consider smooth maps $u:C_r\rightarrow M$ such that
\begin{enumerate}
    \item $(du-Y)^{0,1}=0$,
    \item $\lim_{s\rightarrow-\infty}u(\epsilon_{z_0}(s,t))=y_0(t)$ and $\lim_{s\rightarrow-\infty}u(\epsilon_{\zeta_0}(s,t))=x_0(t)$,
    \item $\lim_{s\rightarrow\infty}u(\epsilon_{z_i}(s,t))=y_i(t)$ and for each $i\in Z^c(n)$, $\lim_{s\rightarrow\infty}u(\epsilon_{\zeta_i}(s,t))=x_i(t)$,
    \item $[u].D=|Z(n)|$ and $u^{-1}(D)=\{\zeta_i:i\in Z(n)\}$. 
\end{enumerate}
We denote by $\mathcal{M}_{d, k,p,n}(x,y):=\{(r,u)\}$ the moduli space of all such tuples such that $r\in\R^{\text{disk}, al}_{d, k, p, n}(\mathbb L)$ and $u$ is a smooth map as above. By Theorem \ref{L_inftyPerturbationData}, Lemma $5.6, 5.9$ and Corollary $5.10$ of \cite{perutz2022constructing} and using the fact that intersection of Baire sets is a Baire set, it follows that generically, $\mathcal{M}_{d, k,p,n}(x,y)$ is a smooth oriented manifold when $\dim_\mathbb R\mathcal{M}_{d, k,p,n}(x,y)\in\{0,1\}$.\\

$\textbf{Step 1:}$ No $D$-orbits.

    The key idea is to consider each connected component of a given $2$-colored tree after removing its disk sub-tree and use the energy estimates as found in \cite{strom2024maurer} due to our choice of Hamiltonians together with the index sum formula. Indeed, let $\{u_i\}_{i\geq1}\subseteq\mathcal{M}_{d,k,p,n}(x,y)$ and suppose that $u_\infty$ is its stable limit indexed by a $2$-colored tree $T$. Following the same terminology as in \cite{strom2024maurer}, we \textit{prune} every internal dashed edge of weight-zero, as in Figure $2$ and Lemma $3.6$ of \cite{strom2024maurer} and denote the resulting tree by $T'$. For $e$ a dashed leaf of $T'$ of weight-zero, we denote by $B_e$ the bubble tree removed. We give each weight-zero leaf the auxiliary cap convention, as in Equation $(3.25)$ of \cite{strom2024maurer}. Namely,
    \[
    |\hat{x}_e|=2, \mathcal A(\hat{x}_e)=0, \text{and }\mathcal P(\hat{x}_e)=-2,
    \] 
    where $\hat{x}_e$ denotes its capped Hamiltonian orbit. For $v$ a vertex of $T'$ we set, $\rho_v:=2d_v+k_v+|F_v|-2$ if $v$ is a solid vertex and $\rho_v:=2d_v+|F_v|-3$ if $v$ is dashed vertex, where $d_v, k_v$ are the numbers of outgoing dashed and solid edges respectively and $|F_v|$ is the number sprinkles carried by the component corresponding to $v$. We decompose the vertices of $T'$ as follows:
    \begin{enumerate}
        \item $V^-$ the set of all dashed vertices $v$ of $T'$ such that $\mathcal{P}(\hat{x}_{in(v)})<0$ and no other dashed vertex on the minimal path joining $v$ to the solid sub-tree passes through such vertex.
        \item For $v\in V^-$, let $U_v$ be the entire connected dashed sub-tree rooted at $v$ and set $U:=\bigcup_{v\in V^-}U_v$.
        \item Let $I:= V(T')\setminus U$.
    \end{enumerate}

    $\textit{Case 1:}$ Suppose that $v\in V^s(T')$ is a solid vertex.\\
    Denote by $y_{in(v)}$ the (unique) incoming Hamiltonian chord corresponding to $u_v$ and let $y_e$ be a Hamiltonian chord of an outgoing solid edge adjacent to $v$. By transversality of our moduli spaces, we have $\operatorname{ind}(u_v)\geq 0$. Using Equations $(3.20)$ and $(3.26)$ of \cite{strom2024maurer}, it follows that,
    \begin{equation}
        \rho_v+|y_{in(v)}|-\sum_{e\in E^s_{out}(v)}|y_e|\geq \sum_{e\in E^d_{out}(v):n_e>0}|\hat{x}_e|+2\sum_{e\in E^d_{out}(v):n_e=0}([u_{v_e}].D+c_1(B_e)-1).
    \end{equation}
     Notice that, by positivity of intersections Lemma \ref{PositivityIntersection} and the monotonicity assumption on $M$, it follows that $u_v.D+c_1(B_e)\geq 1$ and our auxiliary capping convention, it follows that
     \begin{equation}
         \rho_v+|y_{in(v)}|-\sum_{e\in E^s_{out}(v)}|y_e|\geq\sum_{e\in E^d_{out}(v)}|\hat{x}_e|.
     \end{equation}
     
$\textit{Case 2: }$Suppose that $v\in I\cap V^d(T')$.\\ 
Then all edges corresponds to $\mathcal P$-non negative Hamiltonian orbits and hence,
\begin{equation}
    \rho_v+|\hat{x}_{in(v)}|\geq \sum_{e\in E_{out}(v)}|\hat{x}_e|.
\end{equation}

$\textit{Case 3:}$ Suppose that $v$ is a vertex of $U$.\\
We first note that, by additivity of weights and actions, we have the following:
\begin{equation}
    n_{in(v)}=\sum_{e\in E_{out}(v)}n_e+|F_v|,
\end{equation}
\begin{equation}
    \mathcal A(\hat{x}_{in(v)})\geq\mathcal \sum_{e\in E_{out}(v)}\mathcal A(\hat{x}_{e})+C_{n_v}|F_v|.
\end{equation}
If $v\in V^-$, it follows by the definition of $\mathcal P$ (c.f \ref{P-filtrationResult}) that $|\hat{x}_{in(v)}|>E(\hat{x}_{in(v)})$. Thus, by the estimates $(3.29), (3.30)$ of \cite{strom2024maurer} (c.f. Item 1 of Theorem \ref{L_inftyPerturbationData}), it follows that
    \begin{equation}
        -(C+\frac{1}{\kappa}C_{n_v})|F_v|+|\hat{x}_{in(v)}|\geq \sum_{e\in E_{out}(v)}(Cn_e+\frac{1}{\kappa}\mathcal A(\hat x_e))\equiv\sum_{e\in E_{out}(v)}E(\hat{x}_e).
  \end{equation}
    The same is true at every descendant vertex $v$ (i.e. $v\in U\setminus V^-$), and hence we have
    \[
    -(C+\frac{1}{\kappa}C_{n_v})|F_v|+E(\hat x_{in(v)})\geq \sum_{e\in E_{out}(v)}E(\hat x_e)).
    \]
Now we can combine the above estimates to get our claim. Indeed, the key point of the auxiliary cap convention is that we have the following dimension equality
    \[
    \dim_\mathbb R\R^{\text{disk}}_{d',k,p'}+|y_0|-\sum_{1\leq i\leq k}|y_i|-\sum_{e\in E^d(T')}|\hat{x}_e|\equiv\dim_\mathbb R\R^{\text{disk}}_{d,k,p}+|y_0|-\sum_{1\leq i\leq k}|y_i|-\sum_{1\leq i\leq d :n_i>0}|x_i|-2|Z(n)|,
    \]
    where both sides are equal to $\dim_\mathbb R\mathcal M_{d, k,p, n}(x,y)$ and $d', p'$ denote the resulting interior leafs and flavors after pruning and that the Fredholm index of the solid sub-tree satisfies
    \[
    \sum_{v\in V^s(T')}(|y_{in(v)}|-\sum_{e\in E^s_{out}(v)}|y_e|)=|y_0|-\sum_{1\leq i\leq k}|y_i|.
    \]
    Now noting that,
    \[
    \sum_{v\in V(T)}\rho_v=2d+k+|F|-2-|iE(T)|,
    \]
    and in particular,
    \[
    \sum_{v\in V(T)}\rho_v=\dim_{\mathbb R}\mathcal R^{\text{disk}}_{d, k,p}-|iE(T)|,
    \]
 (c.f. Equations $(2.12), (2.21), (2.22)$ of \cite{borman2024l_}), together with the index sum formula, it follows that 
 \begin{equation}
     \sum_{v\in U}(2d_v-2+(1+C+\frac{1}{\kappa}C_{n_v})|F_v|+\sum_{e}\mathcal P(\hat{x}_e))\leq \dim_\mathbb R\mathcal M_{d,k,p,n}(x,y)+1-|I|,
 \end{equation}
where the inner sum is taken over all edges $e$ adjacent to $v$ meeting a $\mathcal P$-negative orbit. 

Now by Equation $(3.14)$ of \cite{strom2024maurer}, we have $1+C+\frac{1}{\kappa}C_{n_v}>3$ and as $\dim_{\mathbb R}\mathcal M_{d,k,p,n}(x,y)\in\{0,1\}$ and $|V^s(T')|\geq1$, it follows using the same argument as in the proof of Lemma $3.6$ of \cite{strom2024maurer} that, if $\dim_\mathbb R\mathcal M_{d, k,p,n}(x,y)=0$ and all $x_i\in\g_{>0}$ then, $u_\infty$ has no components asymptotic to a $\mathcal P$-negative Hamiltonian orbit. The same is true in the case $\dim_\mathbb R\mathcal M_{d,k,p,n}(x,y)=1$ and all $x_i\in\g_{>1}$.\\
    
    $\textbf{Step 2:}$ No sphere bubbles of Chern number $1$.

Using the same notation as in $\textbf{Step 1}$ and the index sum formula, it follows that, if $\dim_\mathbb R \mathcal{M}_{d, k,p,n}(x, y)\in\{0,1\}$ then,
\[
2\sum_{e:n_e=0}([u_v].D+c_1(B_e)-1)\leq1-|iE(T')|.
\]
Note that, by monotonicity of $M$ it follows that $c_1(B_e)\geq 1$. On the other hand, by positivity of intersection Lemma \ref{PositivityIntersection}, it follows that, the only case we need to consider is when $[u_v].D=0$ and $c_1(B_e)=1$. In particular, the sphere bubble in $D$ is attached to a constant Hamiltonian orbit or to a disk tangent to $D$. Hence, by the proof of Lemma $3.4$ and $3.8$ of \cite{strom2024maurer} (c.f. Theorem \ref{CompactnessResult}) or Corollary $5.13$ and Lemma $5.16$ of \cite{perutz2022constructing}, respectively, it follows that if $u_\infty$ is the stable limit of a sequence $u_i\in\mathcal{M}_{d,k,p,n}(x,y)$ such that $\dim_\mathbb R\mathcal{M}_{d, k,p, n}(x,y)\in\{0,1\}$ then, $u_\infty$ has no spherical components.\\

    From Steps $1,2$, it follows that, if $\dim_\mathbb R\mathcal{M}_{d, k,p, n}(x, y)=0$ then, $\mathcal M_{d,k,p,n}(x,y)$ is compact. While, if $\dim_\mathbb R\mathcal{M}_{d,k,p,n}(x,y)=1$ then, its Gromov's compactification only has 2-colored codimension-$1$ breaking. Namely, breaking corresponding to a Hamiltonian orbit in $X$ or a disk breaking.\\
    
    $\textbf{Step 3: }\operatorname{CO}^D_d$ is well-defined.\\
    Let,
    \[
    \operatorname{co}^D_{d,k}:\g^{\otimes d}_{>0}\otimes\hom^{\mathcal W_{cpt}(X)}(L_0, L_1)\otimes\dots\otimes\hom^{\mathcal W_{cpt}(X)}(L_{k-1}, L_k)\rightarrow\hom^{\mathcal W_{cpt}(X)}(L_0, L_k)[2-2d-k],
    \]
    be the signed count of rigid elements of $\mathcal M_{d, k,p,n}(x,y)$ summed over all $(p, n)$. We give $\operatorname{co}^D_{d,k}$ the unique $\tau$-extension as given by Equations $(5.28), (5.29)$ of \cite{borman2024l_}. By the above boundary stratification of $\mathcal{M}_{d,k,p,n}(x,y)$ of expected dimension $0$ or $1$, it follows that $\operatorname{co}^D_{d,k}$ is well-defined.\\
    
    $\textbf{Step 4: }$ Proof of Item $1$.

    Let $x_1, \dots, x_d\in\g_{>1}$. Using the convention $(5.23)$ of \cite{borman2024l_}, Definition \ref{PartialLinfty} (c.f. Equations $(4.15)-(4.18)$ of \cite{strom2024maurer}) and the boundary stratification of $\mathcal{M}_{d,k,p,n}(x,y)$ when $\dim_{\mathbb R}\mathcal M_{d,k,p,n}(x,y)=1$ as described above, it follows that
  \begin{equation*}
\begin{aligned}
0={}&
\sum_{j=1}^{d}
\sum_{\sigma\in\operatorname{Unsh}(j,d)}
(-1)^{\epsilon(\sigma)}
\operatorname{CO}^{D}_{d-j+1}
\Bigl(
    \ell_j(x_{\sigma(1)},\ldots,x_{\sigma(j)}),
    x_{\sigma(j+1)},\ldots,x_{\sigma(d)}
\Bigr)
\\
&+
\Bigl[
    m^{\mathrm{wrap}},
    \operatorname{CO}^{D}_{d}(x_1,\ldots,x_d)
\Bigr]
\\
&+
\sum_{\substack{
    1\leq j\leq d-1\\
    \sigma\in\operatorname{Unsh}(j,d)\\
    \sigma(1)<\sigma(j+1)
}}
(-1)^{
    \epsilon(\sigma)
    +\sum_{i=1}^{j}|x_{\sigma(i)}|
}
\\
&\qquad\cdot
\Bigl[
    \operatorname{CO}^{D}_{j}
    (x_{\sigma(1)},\ldots,x_{\sigma(j)}),
    \operatorname{CO}^{D}_{d-j}
    (x_{\sigma(j+1)},\ldots,x_{\sigma(d)})
\Bigr].
\end{aligned}
\end{equation*}
    where $\epsilon(\sigma)$ is the Koszul sign and 
 \[
\operatorname{Unsh}(j,d)
:=
\left\{
\sigma\in \operatorname{Sym}(d)
\;\middle|\;
\begin{aligned}
\sigma(1)&<\cdots<\sigma(j),\\
\sigma(j+1)&<\cdots<\sigma(d)
\end{aligned}
\right\}.
\]   
    Indeed, the terms \[
    \operatorname{CO}^D_{d-j+1}(\ell_j(x_{\sigma(1)},\dots, x_{\sigma(j)}),x_{\sigma(j+1)},\dots, x_{\sigma(d)})\] correspond to when we have a $2$-colored tree with a single dashed edge (i.e. breaking at a Hamiltonian orbit), while the terms \[
    [\operatorname{CO}^D_{j}(x_{\sigma(1)},\dots, x_{\sigma(j)}),\operatorname{CO}^D_{d-j}(x_{\sigma(j+1)}, \dots, x_{\sigma(d)})], \Bigl[
    m^{\mathrm{wrap}},
    \operatorname{CO}^{D}_{d}(x_1,\ldots,x_d)
\Bigr] \]
    corresponds to a $2$-colored tree with a single solid internal edge (i.e disk breaking). As for the sign check, we use the convention as in Section $6$ of \cite{borman2024l_}. Namely, for a $\mathbb C$-vectorspace, we set
    \[
    \lambda(V):=\bigwedge^{\dim V}(V[-1])
    \]
    and use the Koszul sign convention as in $(6.1)-(6.4)$ of \cite{borman2024l_}. With this in mind, the only point need to be checked is the sign given to the rigid curves having tangency condition on $D$, Indeed, let $u$ be such curve and suppose that the $u$ has a tangency vector at an interior point $\zeta_i$ with $D$ of the form $(m_{i,j})_{j=1, \dots, N}$ and suppose that $m_{i,j}$ is either $0$ or $1$. Let, 
    \[
    V:=\bigoplus_{i\in Z(n)}(\nu_M(D_j))_{u(\zeta_i)}.
    \]
    \\
    We give $u$ an orientation-line so that $o_u\otimes\lambda(V)\cong \lambda(T_u\mathcal M(x',y))\otimes_\mathbb C(\bigoplus_i \lambda(T_{\zeta_i}\Sigma))$, where $x'$ is formed from $x$ after forgetting all forgetful marked points and $\Sigma$ is the domain of $u$. In this case, the formal generators $x^{D_j}$ are given trivial $\mathbb G$-normalized orientation lines and hence we can use the arguments in Section $6$ of \cite{borman2024l_}.\\
    
    $\textbf{Step 5: }$ Proof of Item $2$.

    Suppose that $x_1, \dots, x_d\in\mathcal P_{>0}SC^*(X)$, that is, $Z(n)=\emptyset$ and hence, $[u].D_j=0$ for every $j=1,\dots, N$. By the positivity of intersection Lemma \ref{PositivityIntersection} and the forgetfulness property of our auxiliary data as in Theorem \ref{L_inftyPerturbationData}, it follows that \[
    \operatorname{CO}^D_d(x_1, \dots, x_d)\equiv\operatorname{CO}_d(x_1, \dots, x_d).
    \] 
\end{proof}
\begin{thm}
			The full sub-category $\mathcal{W}(M, D)$, is filtered quasi-equivalent to $\mathcal{F}(M, D)$, Seidel's relative Fukaya category. 
		\end{thm}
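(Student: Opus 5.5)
The plan is to use Lemma \ref{keylemma} to push the Maurer--Cartan homotopy $\beta(t)$ of Proposition \ref{GaugePathProp} forward along $\operatorname{CO}^D$. This produces a one-parameter family of curved filtered $A_\infty$-structures on $\mathcal W_{cpt}(X)^\Lambda$. At $t=0$ the structure is identified with $\F(M,D)$, and at $t=1$ it is $\mathcal W(M,D)$.

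\textbf{Step 1: push forward the homotopy.} By Proposition \ref{GaugePathProp}, $\beta(t)\in\operatorname{MC}(\g^\Lambda_t)$, and I will take it to lie in the completed strict range $\g^\Lambda_{>1,t}$. That lemma only records $\g_{\geq1}$, so this needs checking. I would check it from the ODE: $\alpha=\sum_j x^{D_j}q_j$ has all $D$-generators, and each $\ell_{d+1}(\beta,\dots,\beta,t\alpha)$ only produces $\mathcal P$-values strictly above $1$, by the $\mathcal P$-estimate of Step 1 of Lemma \ref{keylemma} together with the cap convention $\mathcal P(\hat x_e)=-2$.

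Extend $\operatorname{CO}^D$ $\mathbb C[t,dt]$-linearly and set
\[
\mathfrak m(t):=m^{\text{wrap}}+\sum_{d\geq1}\frac{1}{d!}\operatorname{CO}^D_d(\beta(t),\dots,\beta(t))\in CC^*(\mathcal W_{cpt}(X))^\Lambda\hat\otimes\mathbb C[t,dt].
\]
This sum converges $q$-adically because $\beta(t)\in\F_{\geq1}$ and $\operatorname{CO}^D$ preserves filtration. Item 1 of Lemma \ref{keylemma} gives the $L_\infty$-morphism identities on $\g_{>1}$, and the standard formal computation then shows that $\mathfrak m(t)$ is Maurer--Cartan in the homotopy dgLa $CC^*[1]\otimes\mathbb C[t,dt]$. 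Writing $\mathfrak m(t)=\mathfrak m_t+h_t\,dt$, each $\mathfrak m_t$ is therefore a curved $A_\infty$-structure, and $\partial_t\mathfrak m_t=[\mathfrak m_t,h_t]$.

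\textbf{Step 2: identify the endpoints.} At $t=1$, $\ev_1\beta(t)=\beta$. On the $SC^*$ part, Item 2 of Lemma \ref{keylemma} shows that $\operatorname{CO}^D$ agrees with $\operatorname{CO}$. However, $\beta$ may have $x^{D_j}$-components, so I would first check that $\mathcal W(M,D)$ in the Definition is meant with $\operatorname{CO}^D$, or that the $D$-components enter consistently. Granting this, $\ev_1\mathfrak m=\mathcal W(M,D)$.

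At $t=0$, $\beta(0)=\alpha=\sum_j x^{D_j}q_j$. Then
\[
\frac{1}{d!}\operatorname{CO}^D_d(\alpha,\dots,\alpha)=\sum_{|m|=d}\prod_j \frac{q_j^{m_j}}{m_j!}\operatorname{CO}^D_d(x^{D_{j_1}},\dots),
\]
which counts disks with $d$ unordered interior points mapped to $D$ with $[u].D=m$. The $1/d!$ cancels the labeling of the interior points. This is exactly the count defining $m^{\text{rel}}_k$ in $\mathcal M_{d,k,d}(y)$, weighted by $q^{[u].D}$, provided the perturbation data are chosen compatibly. This is possible because intersections of Baire sets are Baire, and the data of Theorem \ref{L_inftyPerturbationData} are forgetful at weight-zero points. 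So $\ev_0\mathfrak m=\F(M,D)$ for suitable choices, and independence of auxiliary choices handles the rest.

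\textbf{Step 3: endpoints are filtered quasi-equivalent.} A Maurer--Cartan element over $\mathbb C[t,dt]$ yields filtered curved $A_\infty$-functors $\ev_0\mathfrak m\leftarrow \mathfrak m(t)\rightarrow \ev_1\mathfrak m$. Here the middle object is the $\mathbb C[t,dt]$-linear category. Alternatively, one can integrate the ODE $\partial_t\mathfrak m_t=[\mathfrak m_t,h_t]$ to get a gauge transformation, which is a functor with identity on objects and first-order term $\mathrm{id}+O(q)$. These functors preserve filtration, and $\mathcal G^0\in\F_{\geq1}$ because $h_t\in\F_{\geq1}$. On $gr_*$, the associated graded of each side is $\F(X)\otimes gr\Lambda$, since everything agrees mod $q$ (see the Proposition that $\F(M,D)$ deforms $\F(X)$). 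The induced functor is the identity on cohomology, hence a quasi-equivalence in the sense of Definition \ref{EquivalenceFunctorUncurved}.

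The main obstacle I expect is Step 1, namely justifying that $\beta(t)$ lies strictly in $\g^\Lambda_{>1}$. This matters because the Key Lemma's identities only hold there, so the rest of the argument depends on it. Matching signs and orientations of tangency-constrained curves with Seidel's conventions in Step 2 is the second concern.
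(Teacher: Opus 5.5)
Your proposal is essentially the paper's proof. The paper also pushes $\beta(t)$ forward along $\operatorname{CO}^D$ to get a curved $\mathbb C[t,dt]$-linear category. It identifies the $t=0$ end with $\F(M,D)$ by matching the tangency-constrained disk counts, phrased in its Step~2 as a strict isomorphism $\F(M,D)\cong\mathcal W^\alpha(M,D)$, and the $t=1$ end with $\mathcal W(M,D)$. It then shows each $\ev_c$ is a filtered quasi-equivalence on $gr_*$, using the $E_1$-page map $\id\otimes\ev_c$ on $(gr_*\hom^{\F(X)}(L_0,L_1),m_1^{\text{exact}})\otimes(\mathbb C[t,dt],d)$ together with Eilenberg--Moore comparison.

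The obstacle you flag, $\beta(t)\in\g^\Lambda_{>1}$, is not derived in the paper from the ODE or from the Key Lemma's Step~1 estimate. That estimate is a compactness bound on bubble trees, not a statement that the operations $\ell_{d+1}$ raise $\mathcal P$ above $1$, so your sketched check does not go through as stated. The paper instead imports this fact from Lemma~4.10 and Corollary~4.11 of \cite{strom2024maurer}, and you should cite it the same way.
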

        \begin{proof}
            Under the assumption that we can find generic $\textit{auxiliary data}$ for the construction of both of $\F(M,D)$ and $\mathcal W(M,D)$, in particular, by adhering to Theorem \ref{L_inftyPerturbationData}, Lemma $5.6, 5.9$ and Corollary $5.10$ of \cite{perutz2022constructing} and using the fact that intersection of Baire sets is a Baire set, we argue as follows:\\
            Denote by $\mathcal W^\alpha(M,D)$ the $\operatorname{CO}(\alpha)$-deformation of the full $A_\infty$-sub-algebra generated by closed, exact Lagrangians in $\overline{X}$.\\
            
            $\textbf{Step 1: }\textit{$\mathcal{W}^\alpha(M,D)$ is filtered quasi-equivalent to $\mathcal{W}(M,D)$}$.\\
            The key point is that $\alpha\sim\beta$ are gauge-equivalent and that $\operatorname{CO}^D$ is satisfies the $L_\infty$-morphism identities when restricted to $\g_{>1}$. Indeed, let $\mathcal C$ be the $A_\infty$-category whose objects are the same as $\F(X)$. For two objects $L_0, L_1$ of $\mathcal C$, we set $\hom^\mathcal C(L_0, L_1):=\hom^{\F(M,D)}(L_0, L_1)\hat{\otimes}\mathbb C[t, dt]$. The $A_\infty$-structure on $\mathcal C$ is given by an element in $m^\mathcal C\in CC^2(\mathcal W_{cpt}(X)\otimes\mathbb C[t,dt])^\Lambda=CC^2(\mathcal W_{cpt}(X))^\Lambda\hat{\otimes}\mathbb C[t,dt]$, where
            \begin{equation*}
                m^t:=m^{\text{wrap}}+\sum_{d\geq1}\frac{1}{d!}\operatorname{CO}^D(\beta(t), \dots, \beta(t)),
            \end{equation*}
            where $\beta(t)$ is as in Proposition \ref{GaugePathProp}, 
            after $t$ and $q$-linearily extending $\operatorname{CO}^D$ to a $\Lambda[t, dt]$-linear map, where the dg-Lie structure on $\mathbb C[t, dt]$ is given by $d(t)=dt$ and $d(dt)=0$. Note that, using Lemma $4.10$ and Corollary $4.11$ of \cite{strom2024maurer} (c.f. Proposition \ref{GaugePathProp}), we have $\beta(t)=\alpha(t)+\gamma(t)dt$ where $\alpha(t), \gamma(t)\in \g_{>1}^\Lambda$ for every $t\in[0,1]$. Using Item $1$ of Lemma \ref{keylemma} and the fact that $\beta(t)\in\operatorname{MC}(\g^\Lambda_t)$, it follows that, $m^t$ defines a class of degree 2 such that $d_{dR}m^t+\frac{1}{2}[m^t, m^t]=0$ and hence, $\mathcal C$ is a curved $A_\infty$-category. Now we claim that for $c\in\mathbb [0,1]$, $ev_c:\mathfrak g_t^\Lambda\rightarrow\mathfrak g^\Lambda$ given by $ev_c(t)=c$ and $ev_c(dt)=0$, induces a filtered quasi-equivalence between $\mathcal C\simeq\mathcal{W}^\alpha(M,D)$ and $\mathcal{C}\simeq\mathcal W(M,D)$. Indeed, as in the proof of Lemma $4.9$ in \cite{strom2024maurer}, $ev_c:\mathfrak g_t^\Lambda\rightarrow\mathfrak g^\Lambda$ is a filtraion-preserving $L_\infty$-morphism, and using Proposition \ref{GaugePathProp}, $\beta(t)\in\operatorname{MC}(\mathfrak g_t^\Lambda)$. Therefore, $ev_c$ induces a filtered $A_\infty$-functor from $\mathcal{C}$ to the $A_\infty$-category $\mathcal C_c$ of objects same as $\F(X)$, morphism spaces given by $\hom^{\mathcal C_c}(L_0, L_1)=\hom^{\F(M,D)}(L_0,L_1)$ and $A_\infty$-structure given by 
            \[
            m^{c}:=m^{\text{wrap}}+\sum_{d\geq1}\frac{1}{d!}\operatorname{CO}^D(\beta(c), \dots, \beta(c))\in CC^2(\mathcal W_{cpt}(X))^\Lambda.
            \]
            It suffice to show that, $ev_c$ induces a cohomologically fully-faithful functor as objects of $\mathcal C$ and $\mathcal C_c$ are the same. To do so and following the proof of Lemma $4.9$ of \cite{strom2024maurer}, we consider the associated graded $(\hom^{gr_*\mathcal C}(L_0, L_1), gr_*m^\mathcal C)$ and $(\hom^{gr_*\mathcal C_c}(L_0, L_1), gr_*m^{\mathcal C_c})$. As $\beta(t)\in\mathfrak g_{\geq1, t}^\Lambda$ and by Lemma \ref{PositivityIntersection} and Lemma \ref{MaxPrinciple}, it follows that on the first page of the associated spectral sequence, $ev_c$ induces a chain map of the form:
            \begin{equation*}
                (gr_*\hom^{\mathcal F(X)}(L_0,L_1), m_1^{\text{exact}})\otimes(\mathbb C[t, dt],d)\xrightarrow{\id\otimes ev_c}(gr_*\hom^{\mathcal F(X)}(L_0,L_1), m_1^{\text{exact}})\otimes(\mathbb C, 0).
            \end{equation*}
            As $ev_c:(\mathbb C[t, dt], d)\rightarrow(\mathbb C, 0)$ is a quasi-iosmorphism, it follows that $\id\otimes ev_c$ is a quasi-isomorphism. By Eilenberg-Moore Comparison theorem, Theorem 5.5.11 of \cite{weibel1994introduction}, and as our filtration is exhaustive and complete, it follows that, $H(gr_*\hom^{\mathcal C}(L_0, L_1))\cong H (gr_*\hom^{\mathcal C_c}(L_0, L_1))$. As $c\in\mathbb [0,1]$ was arbitrary, it follows that $\mathcal W^\alpha(M,D)$ and $\mathcal{W}(M,D)$ are connected by a zig-zag of quasi-equivalence. Therefore, $\mathcal{W}^\alpha(M,D)\simeq\mathcal W(M,D)$ as desired.\\
            
            $\textbf{Step 2: }\textit{$\F(M,D)$ is quasi-equivalent to $\mathcal{W}^\alpha(M,D)$}$.\\
            In fact, we will show that we have a strict isomorphism $\F(M,D)\cong\mathcal W^\alpha(M,D)$. To do so, we note that, the $\textit{coefficient}$ moduli space of the $A_\infty$-structure on $\mathcal{W}^\alpha(M,D)$, is in $1$-$1$ correspondence with that of $\F(M,D)$. Indeed, let $\mathbb L=(L_0, \dots, L_k)$ be a Lagrangian label and $y=(y_0, \dots, y_k)$ be a corresponding tuple of Hamiltonian chords. Having Lemma \ref{MaxPrinciple} and Lemma \ref{PositivityIntersection} in mind, the coefficient of $y_0q^d$ in $\hom^{\mathcal W^\alpha(M,D)}(L_0, L_k)$ is given by the rigid count of pseudo-holomorphic polygons in $M$, from a disk $\Sigma\setminus\{z_0,\dots,z_k\}$ with $(k+1)$-boundary punctures and $d$-interior marked points, satisfying $(du-Y)^{0,1}=0$, with asymptotic conditions limiting to $y_i$ at each boundary puncture and limiting to a single point of $D$ at each interior marked point with local intersection number $1$. On the other hand, the coefficient of $y_0q^d$ in $\hom^{\F(M,D)}(L_0, L_k)$ is given by the rigid count of pseudo-holomorphic polygons in $M$, from a disk $\Sigma\setminus\{z_0,\dots,z_k\}$ with $(k+1)$-boundary punctures and $d$-interior marked points, satisfying $(du-Y)^{0,1}=0$, with asymptotic conditions limiting to $y_i$ at each boundary puncture and limiting to a single point of $D$ at each interior marked point with local intersection number $1$. Therefore, the coefficient of $y_0q^d$ in $\hom^{\mathcal W^{\alpha}(M,D)}(L_0,L_1)$ is the same as the coefficient of $y_0q^d$ in $\hom^{\F(M,D)}(L_0, L_1)$. Therefore, under the assumption that we use the same auxiliary data to define $\F(M,D)$ and $\mathcal{W}(M,D)$, it follows that the identity functor $\F(M,D)\rightarrow \mathcal{W}^\alpha(M,D)$ is a well-defined filtered $A_\infty$-functor and hence $\F(M,D)\cong\mathcal W(M,D)$ are strictly isomorphic.\\

            To finish the proof, we combine $\textbf{Step 1}$ and $\textbf{Step 2}$, and the fact that both of $\F(M,D)$ and $\mathcal{W}(M,D)$ are independent of auxiliary choices up to quasi-equivalence. Thus, we have a zig-zag diagram of quasi-equivalences connecting $\mathcal{F}(M,D)$ and $\mathcal{W}(M,D)$, as desired.
        \end{proof}

\appendix
        \section{Gromov's Graph Trick}
        	 Suppose that $(X, J)$ is an almost complex manifold, $L\subset X$ and $(\Sigma, j)$ be a smooth Riemann surface, possibly with boundary. $J$ induces a decomposition of $TX\cong T^{1, 0}X\oplus T^{0, 1}X$ as real vectorbundles over $X$ by the $\pm 1$-eigenspaces of $J$. Denote by $\bigwedge^{0, 1}\Sigma\boxtimes TX\equiv pr_1^*\bigwedge^{0, 1}\Sigma \otimes pr_2^* TX\rightarrow \Sigma\times X $.
		
		\begin{defn}[Perturbed Pseudo-holomorphic Curve]
			Let $\nu\in\Gamma(\bigwedge^{0, 1}\Sigma\boxtimes TX)$ and $u:\Sigma\rightarrow X$ be a smooth map. $u$ is said to be a perturbed $J$-holomorphic curve if 
			\begin{equation}\label{PerturbedHolomorphicCurve}
				\bar{\partial}_{j, J}u+\nu|_{(z, u(z))}=0
			\end{equation}
			\end{defn}
				\begin{lemma}[Graph Trick]\label{GraphTrick}
				Let $\nu\in\Gamma(\bigwedge^{0, 1}\Sigma\boxtimes TX)$ and $u:\Sigma\rightarrow X$ be a smooth map.
				\begin{enumerate}
					\item The $(1,1)$-tensor 
					\[
					J_\nu = \begin{pmatrix}
						j & 0  \\
						2J\nu & J 
					\end{pmatrix}
					\]
					on $\Sigma\times X$ is an almost complex structure. 
					\item Denote by $\tilde{u}(z):=(z, u(z))$ the graph of $u$. Then $u$ is a perturbed $J$-holomorphic curve, that is, a solution of Equation \ref*{PerturbedHolomorphicCurve}, if and only if $\tilde{u}$ is a $J_\nu$-holomorphic curve.
				\end{enumerate}
			\end{lemma}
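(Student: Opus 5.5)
The plan is a direct pointwise linear-algebra verification. I interpret $\nu$ at a point $(z,x)\in\Sigma\times X$ as a real-linear map $\nu_{(z,x)}:T_z\Sigma\rightarrow T_xX$ which is $(j,J)$-anti-linear, i.e. $\nu\circ j=-J\circ\nu$; this is what membership in $\bigwedge^{0,1}\Sigma\boxtimes TX$ means. I use the convention $\bar\partial_{j,J}u=\frac12(du+J\circ du\circ j)$. The block matrix $J_\nu$ acts on $T_{(z,x)}(\Sigma\times X)=T_z\Sigma\oplus T_xX$ by $(v,w)\mapsto(jv,\,2J\nu(v)+Jw)$.

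\textbf{Item 1.} I will compute $J_\nu^2$ blockwise. The diagonal blocks are $j^2=-\id$ and $J^2=-\id$, and the upper-right block is $0$. The lower-left block is $2J\nu\circ j+J\circ 2J\nu=2J\nu j-2\nu$. By anti-linearity, $J\nu j=-J^2\nu=\nu$, so this block vanishes. Hence $J_\nu^2=-\id$. Smoothness of $J_\nu$ is clear, since $j$, $J$ and $\nu$ are smooth, so $J_\nu$ is an almost complex structure.

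\textbf{Item 2.} For $\tilde u(z)=(z,u(z))$ we have $d\tilde u=(\id,du)$. Then
\[
J_\nu\circ d\tilde u=(j,\;2J\nu_{(z,u(z))}+J\,du),\qquad d\tilde u\circ j=(j,\;du\circ j).
\]
The first components always agree. So $\tilde u$ is $J_\nu$-holomorphic if and only if $du\circ j=2J\nu+J\,du$. Applying $J$ to both sides, this is equivalent to $J\,du\,j=-2\nu-du$, that is, $\frac12(du+J\,du\,j)+\nu=0$. This is exactly Equation~\eqref{PerturbedHolomorphicCurve}, and every step is reversible.

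\textbf{Main obstacle.} The only delicate point is bookkeeping of conventions. The factor $2$ in $J_\nu$ and the sign in \eqref{PerturbedHolomorphicCurve} must match the chosen normalization of $\bar\partial_{j,J}$ and the anti-linearity convention for $\nu$. I would fix these first as above, and then both computations are immediate.
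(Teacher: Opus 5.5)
Your proof is correct and follows the paper's argument. It computes $J_\nu^2$ blockwise using $J\nu j=\nu$, then compares the Cauchy--Riemann equation for the graph $\tilde u$ directly with the perturbed equation; the paper writes $\bar\partial_{J_\nu}\tilde u=(0,\bar\partial_{j,J}u+\nu)$, which is equivalent to your form $J_\nu\circ d\tilde u=d\tilde u\circ j$, and the paper's ``$(j,J)$-linear'' is a slip, since the identity $J\nu j=\nu$ it actually uses is precisely the $(j,J)$-anti-linearity you state.
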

			
			\begin{proof}
				\begin{enumerate}
					\item Note that $\nu$ is $(j, J)$-linear and hence $J\nu j=\nu$. Now 
					\[
					J_\nu^2= \begin{pmatrix}
						j^2 & 0  \\
						2J\nu j+2J^2\nu & J^2 
					\end{pmatrix}
					\]
					which is $-\id_{T\Sigma\oplus TX}$.
					\item For $z\in\Sigma$, we have \[
					d_z\tilde{u}= \begin{pmatrix}
						1  \\
						d_z u 
					\end{pmatrix}
					\]
					\[
					J_\nu|_{(z, u(z))}\circ d_z\tilde{u}\circ j_z= \begin{pmatrix}
						-1\\
						2J_{(u(z))}\circ \nu_{(z, u(z))}\circ j_z+ J_{(u(z))}\circ d_z u\circ j_z 
					\end{pmatrix}
					\]
				\end{enumerate}
				Thus,
				\[
				\bar{\partial}_{J_\nu}\tilde{u}= \begin{pmatrix}
					0  \\
					\bar{\partial}_{j, J}u+\nu_{(z, u(z))}
				\end{pmatrix}
				\]
				and hence the result.
			\end{proof}

            We will be interested in perturbations that arise from a Hamiltonian or a Hamiltonian vectorfields on graphs of pseudo-holomorphic curves. In what follows, we assume that $(X, \omega, J)$ is a symplectic manifold and $L\subset X$ is a closed Lagrangian.	
            \begin{defn}[Hamiltonian $1$-forms for Graphs]\label{HamiltonianFormsonGraphs}
				A Hamiltonian form is a section $K$ of $\bigwedge^1\Sigma\boxtimes\bigwedge^0 X\cong\bigwedge^1\Sigma\boxtimes\underline{\mathbb R}$ over $\Sigma\times X$, such that $K|_{(T\partial\Sigma)\times L}\equiv 0$.
			\end{defn}
			
			\begin{rmk}
				\begin{enumerate}
					\item By Kunneth decomposition, we have $\bigwedge^1(\Sigma\times X)\cong(\bigwedge^1\Sigma\boxtimes\underline{\mathbb R})\oplus(\underline{\mathbb R}\boxtimes\bigwedge^1 X)$ and hence $K$ is a section of $\bigwedge^1(\Sigma\times X)$ where the second component is constant.
					\item Having the above item in mind and also using the decomposition $\bigwedge^2(\Sigma\times X)\cong(\bigwedge^2\Sigma\boxtimes\underline{\mathbb R})\oplus(\bigwedge^1\Sigma\boxtimes\bigwedge^1X)\oplus(\underline{\mathbb R}\boxtimes\bigwedge^2X)$, we can write the total differential of $K$ as $dK=(d_1K, d_2K, 0)$, where $dK=d_1K-2\operatorname{Alt}(d_2K)$.
				\end{enumerate}
			\end{rmk}

            \begin{defn}[Hamiltonian Vectorfields for Graphs]
				Let $K$ be a Hamiltonian $1$-form on $\Sigma\times X$ and denote by $Y$ the unique section of $\bigwedge^1\Sigma\boxtimes TX\rightarrow\Sigma\times X$ satisfying $pr_2^*\omega(Y, .)=d_2K$.
			\end{defn}
			For $(z, p)\in\Sigma\times X$ we can identify $\bigwedge_z^1\Sigma\otimes T_pX$ by real homomorphisms $T_z\Sigma\rightarrow T_p X$. Using the complex structures $(j, J)$, we define the $(0, 1)$-part of the Hamiltonian perturbation $Y$ by
			\begin{equation*}
				Y^{0, 1}(z, p):=\frac{1}{2}(Y(z, p)+ J_p\circ Y(z, p)\circ j_z).
			\end{equation*}
    	\begin{defn}[Hamiltonian Perturbation for Graphs]
				A Hamiltonian perturbation for graphs is a section of $\bigwedge^{0,1}\Sigma\boxtimes TX$ which is of the form $Y^{0, 1}$ for some Hamiltonian form $K$ on $\Sigma\times X$.
			\end{defn}
			
				We note that the symplectic energy of a Hamiltonian perturbed curve is not a topological quantity. To this extent, we consider the following definition.
			
			\begin{defn}[Curvature of a Hamiltonian Form] \label{CurvatureHamiltonian}
				Let $K$ be a Hamiltonian $1$-form on $\Sigma\times X$. We define its curvature by
				\begin{equation*}
					R(K):= d_1K+pr_2^*\omega(Y, Y)
				\end{equation*}
				which is a section of $\bigwedge^2\Sigma\boxtimes\mathbb R$ over $\Sigma\times X$.
			\end{defn}

            \begin{lemma}[Energy Identity of Hamiltonian Perturbed Curves] \label{EnergyHamiltonianCurve}
				Assume that $(X, \omega)$ is a symplectic manifold and $J$ is an $\omega$-compatible almost complex structure. Let $L\subset X$ be a closed Lagrangian and $K$ a Hamiltonian $1$-form on $\Sigma\times X$. Let $u:(\Sigma, \partial\Sigma)\rightarrow(X, L)$ be such that $(du+Y)^{0,1}=0$. Then, $\frac{1}{2}\int_\Sigma|du+ Y|^2dvol_\Sigma=\int_\Sigma u^*\omega +\int_\Sigma R(K)|_{\tilde{u}}$.
			\end{lemma}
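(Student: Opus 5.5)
The plan is to prove the identity pointwise on $\Sigma$ and then integrate, with a single application of Stokes' theorem to absorb the mixed terms. Since the integrand is a density, I will compute in local conformal coordinates $z=s+it$ on $\Sigma$, so that $j\partial_s=\partial_t$ and $dvol_\Sigma=ds\wedge dt$. Write $v:=du+Y|_{\tilde u}\in\Omega^1(\Sigma,u^*TX)$. The hypothesis $(du+Y)^{0,1}=0$ says exactly that $v$ is $(j,J)$-complex linear, so $v(\partial_t)=Jv(\partial_s)$. Because $J$ is $\omega$-compatible with metric $g=\omega(\cdot,J\cdot)$, this gives
\[
\tfrac12|v|^2\,ds\wedge dt=\omega\bigl(v(\partial_s),v(\partial_t)\bigr)\,ds\wedge dt .
\]
This is the standard computation for unperturbed $J$-holomorphic curves, now applied to $v$ in place of $du$.

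Next I expand the right-hand side bilinearly into four pieces. The first is $\omega(du\,\partial_s,du\,\partial_t)\,ds\wedge dt=u^*\omega$. The last is $\omega(Y(\partial_s),Y(\partial_t))\,ds\wedge dt$, which is the term $pr_2^*\omega(Y,Y)$ evaluated along the graph $\tilde u$. The two mixed terms are $\omega(du\,\partial_s,Y\partial_t)+\omega(Y\partial_s,du\,\partial_t)$. By the defining relation $pr_2^*\omega(Y,\cdot)=d_2K$, each mixed term equals a contraction of $d_2K$ with $du$. For instance, $\omega(Y(\partial_s),du\,\partial_t)=(d_2K)(\partial_s)(du\,\partial_t)$.

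I then compare with the pullback of the total differential along the graph. Since $\tilde u=(\mathrm{id},u)$, the pullback $\tilde u^*(dK)$ splits into the $d_1K$ part, which is $\tilde u^*(d_1K)$, and the $\bigwedge^1\Sigma\boxtimes\bigwedge^1X$ part. The latter is precisely the antisymmetrized combination of $(d_2K)(\partial_s)(du\,\partial_t)$ and $(d_2K)(\partial_t)(du\,\partial_s)$ appearing above. Using the convention $dK=d_1K-2\operatorname{Alt}(d_2K)$ from the remark, this identifies the sum of the mixed terms with $\tilde u^*(d_1K)-d(\tilde u^*K)$, up to the sign fixed by that convention. I expect this bookkeeping to be the only genuinely delicate point. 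What I will check is that the signs are such that the $d_1K$ contributions combine with $pr_2^*\omega(Y,Y)$ into exactly $R(K)|_{\tilde u}=d_1K+pr_2^*\omega(Y,Y)$, and that the leftover is an exact form $\pm d(\tilde u^*K)$. If the signs do not close up, the fix is to adjust the sign convention relating $Y$ and $d_2K$ consistently with the equation $(du+Y)^{0,1}=0$, since the paper uses both $du+Y$ and $du-Y$.

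Finally I integrate over $\Sigma$. The exact term contributes, by Stokes, $\int_{\partial\Sigma}\tilde u^*K$. Along $\partial\Sigma$ we have $u(\partial\Sigma)\subseteq L$ and the tangent vectors lie in $T\partial\Sigma$. By the defining condition $K|_{(T\partial\Sigma)\times L}\equiv0$ of a Hamiltonian form, this boundary integral vanishes. What remains is
\[
\tfrac12\int_\Sigma|du+Y|^2dvol_\Sigma=\int_\Sigma u^*\omega+\int_\Sigma R(K)|_{\tilde u}.
\]
Throughout I treat $\Sigma$ as compact, as the statement does. For punctured domains with ends one would additionally need the finite-energy asymptotics, which contribute action terms; these are outside this lemma.
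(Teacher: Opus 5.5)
Your proposal is correct and follows the paper's own argument: in local conformal coordinates you write the energy density as $\omega(v(\partial_s),v(\partial_t))$, expand it into $u^*\omega+R(K)|_{\tilde u}-\tilde u^*dK$, and remove the exact term using Stokes and $K|_{(T\partial\Sigma)\times L}\equiv0$. The sign bookkeeping you were worried about does close up with the paper's stated conventions $(du+Y)^{0,1}=0$ and $pr_2^*\omega(Y,\cdot)=d_2K$: the mixed terms are exactly $\tilde u^*(d_1K)-d(\tilde u^*K)$, so no change of convention is needed.
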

			
			\begin{proof}
				For simplicity assume that $\Sigma$ embeds into $\mathbb{C}$ and use the standard complex coordinates $z=s+it$. Otherwise we argue as follows and use partition of unity to deduce our result. \\
				The result follows by direct computation after noting that
				\begin{equation*}
					u^*\omega=\omega(\partial_su, \partial_tu)ds\wedge dt
				\end{equation*} 
				and that $(du+Y)^{0,1}=0$ can be rewritten as
				\begin{equation*}
					\partial_su=J(Y-\partial_tu)
				\end{equation*}
				and thus
				\begin{equation*}
					\frac{1}{2}|du+Y|^2dvol_\Sigma=\omega(\partial_su+K(\partial_s), \partial_tu+K(\partial_t))= u^*\omega+R(K)|_{\tilde{u}}-\tilde{u}^*dK.
				\end{equation*}
				Now using Stoke's Theorem and the fact that $K\equiv0$ on $\partial\Sigma\times L$, we get the desired result.
			\end{proof}

\bibliographystyle{alpha}
\bibliography{ref}
\end{document}